\definecolor{green}{cmyk}{.51,0,.91,.34}
\newcommand\changed[1]{#1}
\newtheorem{theorem}{Theorem}[section]
\newtheorem{lemma}[theorem]{Lemma}
\newtheorem{corollary}[theorem]{Corollary}
\newtheorem{conjecture}[theorem]{Conjecture}
\theoremstyle{definition}
\newtheorem{definition}[theorem]{Definition}
\newtheorem{question}[theorem]{Question}
\newtheorem{construction}[theorem]{Construction}
\theoremstyle{remark}
\newtheorem{remark}[theorem]{Remark}
\let\rel\mathbb
\let\clo\mathscr
\newcommand{\Proj}{{\clo P}}
\newcommand{\proj}{\operatorname{pr}}
\let\equals\approx
\newcommand{\NP}{\text{NP}}
\newcommand{\Ptime}{\text{P}}
\DeclareMathOperator{\Pol}{Pol}
\DeclareMathOperator{\CSP}{CSP}
\DeclareMathOperator{\Aut}{Aut}
\DeclareMathOperator{\Forb}{Forb}
\DeclareMathOperator{\CSS}{CSS}
\newcommand{\bB}{{\rel B}}
\newcommand{\aA}{{\rel A}}
\newcommand{\cC}{{\rel C}}
\newcommand{\sS}{{\rel S}}
\newcommand{\sSa}{{\rel S}}
\newcommand{\gG}{{\rel G}}
\newcommand{\hH}{{\rel H}}
\begin{document}

  \author[Bodirsky]{Manuel Bodirsky}
  \address{Institute of Algebra, Technische Universit\"at Dresden, Dresden, Germany}
  \email{manuel.bodirsky@tu-dresden.de}
  
  \author[Mottet]{Antoine Mottet}
  \address{Department of Algebra, Charles University in Prague, Czech Republic}
  \email{mottet@karlin.mff.cuni.cz}
  
  \author[Ol\v{s}\'ak]{Miroslav Ol\v{s}\'ak}
  \address{Department of Algebra, Charles University in Prague, Czech Republic}
  \email{mirek@olsak.net}

  \author[Opr\v{s}al]{Jakub Opr\v{s}al}
  \address{Department of Computer Science, Durham University, Durham, UK}
  \email{jakub.oprsal@durham.ac.uk}

  \author[Pinsker]{Michael Pinsker}
  \address{Institute of Discrete Mathematics and Geometry, Technische Universit\"at Wien, Austria, and Department of Algebra, Charles University in Prague, Czech Republic}
  \email{marula@gmx.at}

  \author[Willard]{Ross Willard}
  \address{Department of Pure Mathematics, University of Waterloo, Waterloo, ON, Canada}
  \email{ross.willard@uwaterloo.ca}

  \title[$\omega$-categorical structures avoiding height~1 identities]{$\omega$-categorical structures avoiding height~1 identities}

  \hypersetup{
    pdftitle={ω-categorical structures avoiding height 1 identities},
    pdfauthor={Manuel Bodirsky, Antoine Mottet, Miroslav Olsak, Jakub Oprsal, Michael Pinsker, and Ross Willard}%
  }

  \thanks{%
    Manuel Bodirsky and Jakub Opr\v{s}al were supported by the European Research Council (ERC) under the European Union's Horizon 2020 research and innovation programme (Grant Agreement No 681988, CSP-Infinity).
    Jakub Opr\v{s}al has also received funding from the UK EPSRC (grant No EP/R034516/1).
    Antoine Mottet has received funding from the European Research Council (ERC) under the European Union's Horizon 2020 research and innovation programme (Grant Agreement No 771005, CoCoSym).
    Miroslav Ol\v{s}\'{a}k and Michael Pinsker have received funding  from the Czech Science Foundation (grant No 18-20123S).
    Michael Pinsker has received funding from the Austrian Science Fund (FWF) through project No {P32337}.
    Ross Willard was supported by the Natural Sciences and Engineering Research Council of Canada.
  }
  \thanks{%
    A conference version of this material appeared at the Thirty-Fourth Annual ACM/IEEE Symposium on Logic in Computer Science (LICS) 2019~\cite{TopologyIsRelevant}.
  }

  \subjclass[2010]{Primary: 08B05, 03C05, 08A70; secondary: 03C10, 03D15}
  \keywords{Mal'cev condition; non-nested identity; pointwise convergence topology; $\omega$-categoricity; orbit growth; homogeneous structure; finite boundedness; Constraint Satisfaction Problem; complexity dichotomy}

  \begin{abstract}
  The algebraic dichotomy conjecture for Constraint Satisfaction Problems (CSPs) of reducts of (infinite) finitely bounded homogeneous structures states that such CSPs are polynomial-time tractable if the model-complete core of the template has a pseudo-Siggers polymorphism, and NP-complete otherwise.

  One of the important questions related to \changed{the dichotomy conjecture} is whether, similarly to the case of finite structures, the condition of having a pseudo-Siggers polymorphism can be replaced by the condition of having polymorphisms satisfying a fixed set of identities of height~1, i.e., identities which do not contain any nesting of functional symbols. We provide a negative answer to this question by constructing for each non-trivial set of height~1 identities a structure {within the range of the conjecture} whose polymorphisms do not satisfy these identities, but whose CSP is tractable nevertheless.

  An equivalent formulation of the dichotomy conjecture characterizes tractability of the CSP via the local satisfaction of non-trivial height~1 identities by polymorphisms of the structure. We show that local satisfaction and global satisfaction of non-trivial height~1 identities differ for $\omega$-categorical structures with less than doubly exponential orbit growth, thereby resolving one of the main open problems in the algebraic theory of such structures.
  \end{abstract}

  \maketitle

\section{Introduction}

Many computational problems in theoretical computer science can be phrased as  \emph{constraint satisfaction problems (CSPs)}: in such a~problem, we are given a~finite set of variables and a~finite set of constraints that are imposed on the variables, and the task is to find values for the variables that satisfy all the given constraints. The computational complexity of a~CSP depends on the language that we allow when formulating the constraints in the input. By appropriately choosing this language, many computational problems in optimisation, artificial intelligence, computational biology, verification, and many other areas can be precisely expressed as a~CSP.

Formally, we fix a~{relational} structure $\rel B$ (also called the \emph{template} or \emph{constraint language}).  The problem $\CSP(\rel B)$ is the computational problem of deciding whether a~given conjunction of atomic formulas over the signature of $\rel B$ is satisfiable in $\rel B$.  For example, if the domain of $\rel B$ is the Boolean domain $\{0,1\}$, and $\rel B$ contains all binary Boolean relations, then $\CSP(\rel B)$ is precisely the 2-\textsc{Sat} problem, which can be solved in polynomial time, and if the structure $\rel B$ is the complete graph $\rel K_3$ on three vertices (without loops), then $\CSP(\rel B)$ is precisely the graph 3-coloring problem, which is \NP-complete.  Note that it is not necessary for this definition that the domain of $\rel B$ is finite, and indeed, many problems can only be expressed {if} the domain of $\rel B$ is infinite.  For example the satisfiability of a~system of polynomial equations over the rational numbers can be formulated using a~structure $\rel B$ whose domain is the rationals, but certainly not with a~structure $\rel B$ that has a~finite domain.

The class of CSPs is a~large class which allows for a~uniform mathematical approach to the question which interests us for computational problems in general:
\begin{center}\emph{What kind of structure makes a~problem easy (i.e., polynomial-time tractable), and what makes such a~problem hard (i.e., \NP-hard)?}\end{center}
Evidence for the possibility of a~clear structural characterization of tractability within the realm of large classes of CSPs has been found in the fact that finite-domain CSPs exhibit a~\Ptime/\NP-complete dichotomy, that is, \changed{for every finite structure $\rel B$, the problem} $\CSP(\rel B)$ is in \Ptime\ or \NP-complete. This was conjectured by Feder and Vardi \cite{FV98}, and recently proved by Bulatov \cite{Bul17} and, independently, by Zhuk \cite{Zhu17}.
Both proofs rely on the universal-algebraic approach and recent developments in universal algebra. In fact, they prove a~strengthening of the conjecture which in addition provides a~precise condition that {characterizes} \NP-completeness of a~finite-domain CSP. This strengthening provided by Bulatov, Jeavons, and Krokhin~\cite{BJK05} uses algebraic language, in particular the notion of \emph{polymorphisms}, which are~(structure preserving) finitary functions on a~structure $\rel B$.
Such functions can be viewed as `higher-order symmetries', and in particular they form a~certain generalization of the automorphisms of $\rel B$.
The essence of the algebraic approach is that the complexity of $\CSP(\rel B)$ is determined up to log-space reductions by the polymorphisms of $\rel B$: {the absence of ``interesting"} polymorphisms implies hardness of the CSP, while {their presence} {makes certain algorithms applicable to the CSP}.

Before we move to the infinite case, let us first describe the situation in the finite case. We denote by $\Pol(\rel B)$ the set of all polymorphisms of $\rel B$, and by $\Proj$ the set of projections, i.e., trivial polymorphisms {on the set $\{0,1\}$} (these are precisely the polymorphisms of 3-\textsc{Sat}).  Using results of Siggers \cite{Sig10} {and from~\cite{BOP18}}, the finite-domain CSP dichotomy can then be formulated as follows (see Section~\ref{sect:prelims} for the definitions of the concepts that appear in the statement).
\begin{theorem}[Bulatov-Zhuk \cite{Bul17,Zhu17}]\label{thm:bulatov-zhuk}
Let $\rel B$ be a~finite structure.
Exactly one of the following holds:
\begin{enumerate}
	\item There exists a~minion homomorphism $\Pol(\rel B)\to \Proj$, and $\CSP(\rel B)$ is \NP-complete,
	\item $\Pol(\rel B)$ contains a~function $s$ satisfying the \changed{Siggers} identity
        \begin{equation}
          \forall x,y,z\in B\;\; s(x,y,x,z,y,z) = s(y,x,z,x,z,y),
            \tag{$\diamondsuit$}\label{eq:siggers}
        \end{equation}
        and $\CSP(\rel B)$ is in \Ptime.
\end{enumerate}
In particular, $\CSP(\rel B)$ is in \Ptime\ or \NP-complete.
\end{theorem}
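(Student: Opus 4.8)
The plan is to decompose the statement into three parts: (a) the mutual exclusivity of~(1) and~(2), (b) the hardness in case~(1), and (c) the tractability in case~(2), with essentially all the weight resting on~(c). For part~(a): a minion homomorphism maps any $6$-ary $s\in\Pol(\rel B)$ to a projection $\pi_i^{(6)}$ and preserves height~$1$ identities, so if $s$ satisfied~\eqref{eq:siggers} then $\pi_i^{(6)}(x,y,x,z,y,z)=\pi_i^{(6)}(y,x,z,x,z,y)$ would hold on $B$; choosing distinct values for $x,y,z$ (possible as we may assume $|B|\ge 2$, the case $|B|\le 1$ being trivial) this fails for every $i$, so (1) forbids~(2). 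Conversely, suppose~(1) fails. By the correspondence between primitive positive constructibility and minion homomorphisms from~\cite{BOP18} (recalled in Section~\ref{sect:prelims}), the absence of a minion homomorphism $\Pol(\rel B)\to\Proj$ means $\rel B$ does not pp-construct $\rel K_3$; passing to the (finite) core $\rel C$ of $\rel B$ and expanding it by all singleton unary relations yields a finite idempotent structure whose polymorphism algebra is then Taylor, so by Siggers' theorem~\cite{Sig10} it contains a $6$-ary operation $s$ obeying~\eqref{eq:siggers}. Since $\rel C$ is a retract of $\rel B$, say via $r\colon\rel B\to\rel C$ and $\iota\colon\rel C\to\rel B$ with $r\circ\iota=\mathrm{id}$, the map $(x_1,\dots,x_6)\mapsto\iota\bigl(s(r(x_1),\dots,r(x_6))\bigr)$ is a polymorphism of $\rel B$ still satisfying~\eqref{eq:siggers}. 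Hence exactly one of~(1) and~(2) holds.

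For the hardness claim in case~(1): a minion homomorphism $\Pol(\rel B)\to\Proj$ is, by the same correspondence, equivalent to $\rel B$ pp-constructing every finite structure, in particular $\rel K_3$, equivalently the Boolean template of $3$-\textsc{Sat}. As pp-constructions yield log-space reductions between constraint satisfaction problems, $\CSP(\rel K_3)$ log-space reduces to $\CSP(\rel B)$; since $\CSP(\rel K_3)$ is \NP-hard and $\CSP(\rel B)\in\NP$ trivially, $\CSP(\rel B)$ is \NP-complete.

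It remains to treat case~(2): when $\Pol(\rel B)$ has a Siggers polymorphism --- equivalently, by the first paragraph, when~(1) fails and the idempotent algebra of the core is Taylor --- one must show $\CSP(\rel B)\in\Ptime$. This is the step I expect to be overwhelmingly the hardest, and there is no shortcut here: it is precisely the Bulatov--Zhuk theorem, established independently in~\cite{Bul17} and~\cite{Zhu17}. One would follow either Bulatov's route through the combinatorics of coloured graphs and edges attached to a finite Taylor algebra, exploiting absorption, centrality, and tame congruence theory to decompose instances, or Zhuk's route via the calculus of strong subalgebras together with a careful reduction of a consistent instance either to an affine system solvable by Gaussian elimination or to an instance of bounded width. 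Granting the Bulatov--Zhuk theorem, the three parts above together show that exactly one of~(1) and~(2) holds, and in particular that $\CSP(\rel B)$ is in \Ptime\ or \NP-complete.
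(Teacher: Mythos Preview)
The paper does not prove Theorem~\ref{thm:bulatov-zhuk}; it is stated as a known result from the literature, attributed to~\cite{Bul17,Zhu17} with the dichotomy of conditions coming from~\cite{Sig10} and~\cite{BOP18}. So there is no ``paper's own proof'' to compare your outline against. Your decomposition into (a)~dichotomy of the two cases, (b)~hardness via pp-constructibility, and (c)~tractability as the Bulatov--Zhuk algorithm is the standard way this statement is assembled, and you correctly identify that~(c) carries all the weight and is simply cited.

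One small slip in part~(a): when you argue that a minion homomorphism $\xi\colon\Pol(\rel B)\to\Proj$ rules out a Siggers polymorphism, the image $\xi(s)$ is a projection on $\{0,1\}$, not on $B$, so the identity~\eqref{eq:siggers} would have to hold for $\xi(s)$ on the two-element set. Your remark about choosing three distinct values in $B$ (and the case split on $|B|$) is therefore beside the point. The argument still goes through, because for each $i\in\{1,\dots,6\}$ the identity $\pi_i^{(6)}(x,y,x,z,y,z)=\pi_i^{(6)}(y,x,z,x,z,y)$ already fails on $\{0,1\}$ (e.g.\ $i=1$ forces $x=y$), so no projection satisfies~\eqref{eq:siggers} and the Siggers condition is non-trivial. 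The rest of your argument for~(a) --- passing to the core, adding constants, invoking Siggers' theorem on the resulting finite idempotent Taylor algebra, and lifting the height~1 term back along the retraction --- is the standard route and is fine.
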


One astonishing fact is that the condition for tractability in this dichotomy has an elegant formulation using a~single function satisfying a~single {non-nested ({\emph{height~1}})} identity.

For templates with an infinite domain, it is known that no such dichotomy exists in general and that CSPs exhaust all possible complexity classes, up to polynomial-time Turing reductions~\cite{BG08}.
However, large classes of infinite templates have been proved to exhibit a~\Ptime/\NP-complete dichotomy. One of the largest and most robust classes that have been conjectured
to have such a~dichotomy is the class of so-called \emph{first-order reducts of finitely bounded homogeneous structures}.
This class is important for several reasons:
\begin{itemize}
\item It is a~vast generalisation 
of the class of finite structures \changed{for which} it is possible to investigate deep questions about the nature of computation. 
Many problems studied e.g.~in temporal and spatial reasoning can be formulated as CSPs for such structures \cite{BJ17}. 
\item \changed{All the structures in this class are \emph{$\omega$-categorical}, i.e., they have an oligomorphic automorphism group.} Thus, the algebraic methods of finite-domain constraint satisfaction can still be used in this class~\cite{BodirskyNesetrilJLC,Topo-Birk}, and due to its relative tameness \changed{the class provides} an important framework where the tractability of large classes of computational problems can be tied to algebraic and topological properties of mathematical objects.
\end{itemize}
The dichotomy conjecture for \changed{such} structures has been verified in numerous special cases, for example for all CSPs in the complexity class MMSNP \cite{MMSNP}; also see~\cite{tcsps-journal,BodPin-Schaefer-both,Phylo-Complexity,posetCSP16,BMPP16}.
There are various equivalent formulations of the infinite-domain tractability conjecture originally formulated in \cite{BPP-projective-homomorphisms} (Conjecture~\ref{conj:tract}).  The most recent one, proposed by Barto, Opr\v{s}al, and Pinsker~\cite{BOP18} and later proved to be equivalent to the original one~\cite{BKOPP17a,BKOPP17}, is now considered the most satisfactory formulation both esthetically and practically:

\begin{conjecture}\label{conj:tract2}
Let $\rel B$ be a~reduct of a~finitely bounded homogeneous structure. Exactly one of the following holds:
\begin{enumerate}
  \item There exists a~uniformly continuous minion homomorphism from $\Pol(\rel B)$ to $\Proj$, and $\CSP(\rel B)$ is \NP-complete. 
  \item $\Pol(\rel B)$ does not have a~uniformly continuous minion homomorphism to $\Proj$, and $\CSP(\rel B)$ is in \Ptime.
\end{enumerate}
\end{conjecture}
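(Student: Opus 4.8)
\emph{Strategy.} As this statement is itself the central open conjecture of the area, what follows is a plan together with an honest account of where it stalls, rather than a complete proof. The two items are manifestly mutually exclusive --- the existence of a uniformly continuous minion homomorphism $\Pol(\rel B)\to\Proj$ and its negation cannot both hold --- so the content is to show that this algebraic condition determines the complexity of $\CSP(\rel B)$ for every reduct $\rel B$ of a finitely bounded homogeneous structure. First I would record that $\CSP(\rel B)\in\NP$: the relations of $\rel B$ are quantifier-free definable over the underlying homogeneous structure $\rel A$ (homogeneity gives quantifier elimination) in a finite relational signature, and finite boundedness means that the age of $\rel A$ is defined by finitely many forbidden substructures; hence a nondeterministic machine can guess a finite $\rel A$-structure on the variable set witnessing a solution and verify in polynomial time that it satisfies the quantifier-free translations of the constraints and omits each forbidden structure. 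It then remains to prove the two implications: (a)~a uniformly continuous minion homomorphism $\Pol(\rel B)\to\Proj$ forces \NP-hardness of $\CSP(\rel B)$; and (b)~the absence of such a homomorphism forces $\CSP(\rel B)\in\Ptime$.

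For (a) the plan is to invoke the pp-construction machinery. One first passes to the model-complete core $\cC$ of $\rel B$; this changes neither the CSP up to polynomial-time equivalence nor the existence of a uniformly continuous minion homomorphism to $\Proj$. For an $\omega$-categorical model-complete core, possessing such a homomorphism is, by the correspondence between uniformly continuous minion homomorphisms and pp-constructability established in \cite{BOP18,BKOPP17a,BKOPP17}, equivalent to $\cC$ pp-constructing (with finitely many parameters) all finite structures, in particular $\rel K_3$. A pp-construction yields a log-space reduction from $\CSP(\rel K_3)$ to $\CSP(\cC)$, so $\CSP(\cC)$ --- and hence $\CSP(\rel B)$ --- is \NP-hard. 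Modulo the cited equivalences, the only work here is the bookkeeping around parameters and the reduction to the core, both routine in the $\omega$-categorical setting (Section~\ref{sect:prelims}).

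Direction (b) is where I expect the genuine --- in fact presently unresolved --- difficulty to lie: it is exactly the open half of Conjecture~\ref{conj:tract2} (equivalently, of Conjecture~\ref{conj:tract}). The natural attempt is to transplant the algorithmic half of Theorem~\ref{thm:bulatov-zhuk}: reduce to the model-complete core $\cC$, deduce from the non-existence of a minion homomorphism to $\Proj$ that $\Pol(\cC)$ satisfies (or at least locally satisfies) a strong nontrivial system of height~$1$ identities, and then show that a local-consistency / bounded-width algorithm, or a ``few subpowers''-type algorithm in the spirit of the proofs of Bulatov and Zhuk, decides $\CSP(\cC)$ in polynomial time. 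The obstacle is that the finite-domain algorithmic results rest on machinery --- absorption theory, the coloured graph of a finite algebra, tame congruence theory --- that is bound up with finiteness of the algebra and does not transfer to the infinite domain even when $\omega$-categoricity is available. Worse, a contribution of the present paper is to pinpoint one reason the transfer is subtle: for $\omega$-categorical structures with less than doubly exponential orbit growth, \emph{local} satisfaction of nontrivial height~$1$ identities by $\Pol(\rel B)$ need not entail their \emph{global} satisfaction, so one cannot freely substitute the local condition for the global one when importing finite-domain arguments. The realistic outcome of this plan is therefore not a proof of the conjecture but a reduction of it to the polynomial-time tractability of $\CSP(\cC)$ for $\omega$-categorical model-complete cores $\cC$ whose polymorphism clone admits no uniformly continuous minion homomorphism to $\Proj$ --- which remains open.
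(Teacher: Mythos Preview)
Your assessment is correct: Conjecture~\ref{conj:tract2} is stated in the paper as an open conjecture, not a theorem, and the paper contains no proof of it. The paper's contributions (Theorems~\ref{thm:main-1} and~\ref{thm:main-3}) are in fact \emph{negative} results constraining possible strategies for attacking the conjecture, not steps toward proving it. So there is no ``paper's own proof'' to compare your proposal against, and your honest identification of direction~(b) as the open tractability half is exactly right.

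Two small remarks on your sketch of the known direction~(a). First, the detour through the model-complete core is unnecessary: by Theorem~\ref{thm:bop18} (the ``if'' direction, which applies since $\rel B$ is $\omega$-categorical), a uniformly continuous minion homomorphism $\Pol(\rel B)\to\Proj$ already yields that a structure with polymorphism clone $\Proj$ (e.g., 3-\textsc{Sat}) is pp-constructible from $\rel B$ itself, giving \NP-hardness directly --- this is exactly what the paper records just after stating the conjecture. Second, your remark that the paper shows local and global satisfaction of height~1 identities can differ is accurate, but be careful with its scope: the witnessing structure $\rel S$ of Theorem~\ref{thm:main-3} lies \emph{outside} the range of Conjecture~\ref{conj:tract2} (it is not a reduct of a finitely bounded homogeneous structure). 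Whether the two notions coincide \emph{within} that range --- Question~(1) of the introduction --- remains open, so the paper does not actually rule out the possibility that, for the structures covered by the conjecture, one may freely pass between the local and global conditions.
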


Here, uniform continuity is meant with respect to the {natural uniformity which induces the \emph{pointwise convergence topology} on the space of all finitary functions on a set.}
It is known that if there exists a~uniformly continuous minion homomorphism $\Pol(\rel B)\to\Proj$, then $\CSP(\rel B)$ is \NP-complete~\cite{BOP18}.

There are two major differences between the above conjecture and the finite-domain CSP dichotomy as phrased in Theorem~\ref{thm:bulatov-zhuk}.
First of all, there is topological content in Conjecture~\ref{conj:tract2}.
This topological nature can loosely be explained in the following terms: the non-existence of a~uniformly continuous minion homomorphism $\Pol(\rel B)\to\Proj$
can be characterised by the fact that non-trivial {height~1} identities are satisfied {in $\Pol(\rel B)$} on every finite subset of $\rel B$, while the non-existence of a~minion homomorphism $\Pol(\rel B)\to\Proj$
is characterised by the fact that some non-trivial {height~1} identities are satisfied {in $\Pol(\rel B)$} on the whole structure $\rel B$.
This local/global distinction evidently only arises when $\rel B$ is an infinite structure, and can be understood as one of the major obstacles \changed{to} solving Conjecture~\ref{conj:tract2}.

Second, even when ignoring the topology, it is not known whether \changed{clause (2) in Conjecture~\ref{conj:tract2}} can be expressed by the satisfaction of some fixed height~1 identities in $\Pol(\rel B)$ as is the case in Theorem~\ref{thm:bulatov-zhuk}(2). This naturally raises the following questions, which were also asked in~\cite{BartoPinskerDichotomy, BOP18, Topo}:
\begin{enumerate}
  \item Does the existence of a~minion homomorphism $\Pol(\rel B)$ to $\Proj$ imply the existence of a~uniformly continuous one? In other words, can the requirement of uniform continuity in Conjecture~\ref{conj:tract2} be dropped?
  \item Can the non-existence of a~minion homomorphism to $\Proj$ be replaced by a~statement positing that some fixed set of height 1 identities holds in $\Pol(\rel B)$?
\end{enumerate}
We note that a~positive answer to the second question would have \changed{consequences which would} make a~positive answer to the first question more likely.
Moreover, the corresponding natural questions were asked about the historically first conjecture (see~\cite{BPP-projective-homomorphisms} and Conjecture~\ref{conj:tract})
and were proved to have positive answers~\cite{BartoPinskerDichotomy, Topo}, thus showing that topology is irrelevant in that formulation of the conjecture. 

The second question is purely algebraic, and therefore of interest to universal algebra as well. Similar questions have been asked about various properties of algebras, e.g.\ \cite{Tay88,Ols17}. One notable open problem in this field is whether the algebraic condition describing structures whose CSP can be solved by a~Datalog program \cite{BK14} can be described by a~single fixed set of {height~1} identities as well. As in Theorem~\ref{thm:bulatov-zhuk}, it is known that there is such a~set of {height~1} identities when we restrict to finite domains.

\subsection*{Contributions}

In the present paper, we give a~negative answer to the second question, proving that no system of height~1 identities (also called \emph{height 1 condition}) can be used as a~replacement for the condition in the second item of Conjecture~\ref{conj:tract2}. Our result is formalized as follows:
\begin{theorem} \label{thm:main-1}
  For every non-trivial height 1 condition $\Sigma$ there exists a~structure $\rel B$ such that 
  \begin{itemize}
  \item $\rel B$ is a~first-order reduct of a~finitely bounded homogeneous structure;
  \item $\Pol(\rel B)$ does not satisfy $\Sigma$;
  \item $\Pol(\rel B)$ satisfies some other non-trivial height 1 condition (consequently, there is no minion homomorphism to $\Proj$);
  \item $\CSP(\rel B)$ is in \Ptime.
 \end{itemize}
 \end{theorem}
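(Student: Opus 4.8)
The plan is to construct the template $\rel B$ explicitly from $\Sigma$. Taken together, the second and third bullets demand that $\Pol(\rel B)$ satisfy \emph{some} non-trivial height~1 condition on all of $\rel B$ yet not the prescribed one $\Sigma$; over a finite domain this is impossible, since by Theorem~\ref{thm:bulatov-zhuk} a finite algebra satisfies a non-trivial height~1 condition only if it satisfies the Siggers condition, so there is a weakest non-trivial height~1 condition in the finite world. The point of the theorem is that there is no weakest non-trivial height~1 condition among oligomorphic polymorphism clones, and that a witness to this can be chosen inside the narrow class of first-order reducts of finitely bounded homogeneous structures while keeping the CSP tractable. Thus the task decomposes into: (A)~identify, for each non-trivial $\Sigma$, an oligomorphic clone $\mathscr M$ that satisfies some non-trivial height~1 condition but not $\Sigma$; (B)~realise $\mathscr M$ as $\Pol(\rel B)$ for a first-order reduct $\rel B$ of a finitely bounded homogeneous structure; (C)~show $\CSP(\rel B)\in\Ptime$.

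For the construction I would proceed uniformly in $\Sigma$. From the finitely many function symbols of $\Sigma$ one builds a countable relational structure, together with a homogeneous companion in a finite relational language, and takes $\rel B$ to be the suitable first-order reduct; one should moreover be able to keep the number of orbits of $n$-tuples singly exponential in $n$, matching the abstract. The relations of $\rel B$ are the delicate ingredient: they must be put together so that every finite substructure carries ``obvious'' operations witnessing a Siggers-type identity — giving local-consistency arguments a handle — while their global interaction obstructs $\Sigma$. The latter is argued by contradiction: from polymorphisms of $\rel B$ satisfying $\Sigma$ one transports the identities of $\Sigma$ through an appropriate primitive-positive power of $\rel B$ to obtain a minion homomorphism onto a clone in which $\Sigma$ together with (the pp-definitions of) the relations of $\rel B$ is unsatisfiable, a contradiction. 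Homogeneity and finite boundedness of the companion are verified in the usual way: a class of finite structures closed under substructures, with amalgamation, defined by finitely many forbidden substructures.

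Tractability I would deduce from the presence of suitably \emph{canonical} polymorphisms of $\rel B$ — operations that, modulo $\Aut(\rel B)$ and restricted to the finitely many behaviours visible in an instance, imitate polymorphisms of a tractable finite template (for instance a near-unanimity or an affine operation). This should make either the known local-consistency / bounded-width machinery for $\omega$-categorical templates apply directly, or $\CSP(\rel B)$ log-space reducible to a fixed finite tractable CSP via a primitive-positive construction; in either case the algorithm amounts to normalising the instance, running boundedly many consistency rounds (or a Gauss-style elimination), and answering — all polynomial in the instance. The same canonical polymorphisms, read directly off the construction, witness the non-trivial height~1 condition of the third bullet and hence also rule out a minion homomorphism $\Pol(\rel B)\to\Proj$.

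The main obstacle — and the technical core of the argument — is the simultaneous two-sided control of $\Pol(\rel B)$: its relations must be rich enough to destroy every operation that could realise $\Sigma$, yet sparse enough that a non-trivial height~1 condition survives and the CSP stays polynomial. These two demands cannot both be met over a finite domain (Theorem~\ref{thm:bulatov-zhuk}), so the construction is forced into the infinite world and must be reconciled with the rigidity of the class of first-order reducts of finitely bounded homogeneous structures — its amalgamation and boundedness requirements, and the need for an explicit tractability algorithm rather than an appeal to a dichotomy theorem. Carrying all of this out uniformly in $\Sigma$ is where the real work lies.
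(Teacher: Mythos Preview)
Your proposal correctly identifies the shape of the problem and the tension between the second and third bullets, but it is a research plan rather than a proof: no structure $\rel B$ is actually constructed, and the mechanisms you invoke (``transport $\Sigma$ through a pp-power to a contradiction'', ``canonical polymorphisms imitating a tractable finite template'') are placeholders for arguments that still have to be found. The paper's proof is both far more concrete and far simpler than what you outline.

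The key idea you are missing is that one should not work with $\Sigma$ directly. The paper first encodes an arbitrary non-trivial height~1 condition by a \emph{graph}: to every finite graph $\rel G$ one associates a height~1 condition $\Sigma_{\rel G}$ (one ternary symbol per vertex, one $6$-ary symbol per edge, Siggers-like identities along edges), and one shows that every non-trivial $\Sigma$ implies $\Sigma_{\rel G}$ for some non-$3$-colourable $\rel G$ (Corollary~\ref{cor:1.2}). One may take $\rel G$ connected. The structure $\rel B$ is then simply the Cherlin--Shelah--Shi universal structure $\CSS(\rel G)$ for $\Forb(\rel G)$, which by Hubi\v{c}ka--Ne\v{s}et\v{r}il (Theorem~\ref{thm:hubicka-nesetril}) is a first-order reduct of a finitely bounded homogeneous structure. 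The second bullet follows from a two-line argument (Lemma~\ref{lem:sigma_g}): $\CSS(\rel G)$ contains a copy of $\rel K_3$ (since $\rel G\not\to\rel K_3$), and if $\Pol(\CSS(\rel G))$ satisfied $\Sigma_{\rel G}$ then evaluating the $f_v$'s on the three $\rel K_3$-vertices would yield a homomorphism $\rel G\to\CSS(\rel G)$, contradicting universality for $\Forb(\rel G)$. The third bullet is witnessed by a quasi near unanimity polymorphism of any arity exceeding the number of edges of $\rel G$, obtained by a direct indicator-structure / pigeonhole argument (Lemma~\ref{lem:fg}); no canonical-function machinery is needed. Finally, tractability is immediate: $\CSP(\CSS(\rel G))$ asks whether $\rel G$ homomorphically maps into the input, which is polynomial since $\rel G$ is fixed (Lemma~\ref{lem:forbCSP}). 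None of your proposed ingredients --- building $\rel B$ from the function symbols of $\Sigma$, contradiction via pp-powers, or a local-consistency or Gaussian algorithm --- are used or needed.
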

{We remark that the CSPs of the structures constructed in the proof of this theorem are in~P because they can be solved by ``local checking", and hence by a Datalog program. Consequently, we also show that those CSPs which are solvable in this manner cannot be described by the satisfaction of a fixed set of height~1 identities.}

On the other hand, we construct an infinite chain of \changed{successively weaker} systems of height~1 identities { that can be used to describe the non-existence of a~minion homomorphism to $\Proj$. Decreasing chains of sets of identities, called \emph{Mal'cev conditions}~\cite{HobbyMcKenzie}, are commonly studied in universal algebra since many interesting properties of algebras can be expressed by such a~condition, e.g.\ congruence distributivity \cite{Jon67}.  The members of our chain are  naturally \changed{encoded by} finite graphs, and generalize the single identity of Theorem~\ref{thm:bulatov-zhuk}. In both proofs of the latter theorem, such a decreasing chain of height~1 conditions was sufficient to prove the dichotomy (i.e., the authors do not rely on the satisfaction of the Siggers identity~\ref{eq:siggers}). We will provide a formal statement in  Theorem~\ref{thm:chain}.}

This still leaves the possibility that Question~(1) above has a~positive answer. 
\changed{We provide a negative answer in a wider context by combining the structures from Theorem~\ref{thm:main-1} into a single structure $\rel S$.
The structure $\rel S$ is not a~first-order reduct of a~finitely bounded homogeneous structure, and therefore does not \changed{lie within} the scope of Conjecture~\ref{conj:tract2}.
However, it is $\omega$-categorical and has slow orbit growth, which shows that the techniques that were used to show that the two infinite-domain dichotomy conjectures are equivalent~\cite{BKOPP17a,BKOPP17} cannot be employed to remove the topological considerations from Conjecture~\ref{conj:tract2}.}

\begin{theorem} \label{thm:main-3}
There exists a~structure $\rel S$ with the following properties.
\begin{enumerate}
\item $\rel S$ is an $\omega$-categorical structure with less than doubly exponential orbit growth.
  \item $\Pol(\rel S)$ has a~minion homomorphism to $\Proj$.
  \item $\Pol(\rel S)$ has no uniformly continuous minion homomorphism to $\Proj$.
\end{enumerate}
\end{theorem}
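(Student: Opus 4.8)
The plan is to build $\rel S$ from the structures provided by Theorem~\ref{thm:main-1}. Let $\Sigma_1,\Sigma_2,\dots$ be the chain of successively weaker non-trivial height~1 conditions of Theorem~\ref{thm:chain}, so that a polymorphism minion admits a minion homomorphism to $\Proj$ exactly when it satisfies none of the $\Sigma_n$. Applying Theorem~\ref{thm:main-1} to each $\Sigma_n$ yields a first-order reduct $\rel B_n$ of a finitely bounded homogeneous structure with $\Pol(\rel B_n)\not\models\Sigma_n$; since $\Pol(\rel B_n)$ nevertheless satisfies \emph{some} non-trivial height~1 condition, it has no minion homomorphism to $\Proj$, so by Theorem~\ref{thm:chain} and the chain being decreasing we get $\Pol(\rel B_n)\models\Sigma_m$ for all sufficiently large~$m$. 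I would then realise $\rel S$ on a single countable domain $S$ carrying renamed copies of the relations of all the $\rel B_n$, so that $\Pol(\rel S)=\bigcap_n\Pol(\rel B_n)$ and the reduct of $\rel S$ to the $n$-th batch of relations is (isomorphic to) $\rel B_n$. For the two remaining clauses the construction must be arranged so that \textit{(a)} $\rel S$ is $\omega$-categorical with less than doubly exponential orbit growth, and \textit{(b)} for every finite $F\subseteq S$ there is an index $N=N(F)$ such that every polymorphism of $\rel B_N$ agrees on $F$ with some polymorphism of $\rel S$ of the same arity; informally, on any finite ``window'' only finitely many of the $\rel B_n$ constrain the polymorphisms of $\rel S$, so the restriction of $\Pol(\rel S)$ to $F$ stays as rich as that of a single $\rel B_N$.

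Given \textit{(a)} and \textit{(b)}, clause~(2) is immediate. For every $n$ the inclusion $\Pol(\rel S)\hookrightarrow\Pol(\rel B_n)$ is a minion homomorphism (minors are computed identically in the two minions), and minion homomorphisms preserve the satisfaction of height~1 conditions; hence $\Pol(\rel S)\models\Sigma_n$ would entail $\Pol(\rel B_n)\models\Sigma_n$, contradicting the choice of $\rel B_n$. Thus $\Pol(\rel S)\not\models\Sigma_n$ for all $n$, and by Theorem~\ref{thm:chain} the minion $\Pol(\rel S)$ satisfies no non-trivial height~1 condition, i.e.\ it admits a minion homomorphism to $\Proj$.

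For clause~(3) I would invoke the local/global characterisation recalled in the introduction: $\Pol(\rel S)$ has a uniformly continuous minion homomorphism to $\Proj$ if and only if, for some finite $F\subseteq S$, the minion $\{\,f|_F : f\in\Pol(\rel S)\,\}$ of restrictions to $F$ has a minion homomorphism to $\Proj$. Fix a finite $F$, let $N=N(F)$ be as in \textit{(b)}, and choose $m$ with $\Pol(\rel B_N)\models\Sigma_m$. By \textit{(b)}, the (globally valid) witnesses for $\Sigma_m$ inside $\Pol(\rel B_N)$ agree on $F$ with polymorphisms of $\rel S$; restricting the latter to $F$ yields elements of $\{\,f|_F : f\in\Pol(\rel S)\,\}$ witnessing $\Sigma_m$ over $F$. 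Hence this minion satisfies the non-trivial height~1 condition $\Sigma_m$, and since minion homomorphisms preserve satisfaction of height~1 conditions while $\Proj\not\models\Sigma_m$, it has no minion homomorphism to $\Proj$. As $F$ was arbitrary, clause~(3) follows.

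The main obstacle is producing a structure $\rel S$ that simultaneously meets \textit{(a)} and \textit{(b)}. There is real tension: collapsing $\Pol(\rel S)=\bigcap_n\Pol(\rel B_n)$ far enough for clause~(2) pushes towards having ``many'' relations, which threatens both $\omega$-categoricity and the orbit bound, whereas \textit{(b)} needs the relations coming from $\rel B_n$ to leave small finite sets essentially unconstrained until $n$ is large. I would try to resolve this by interleaving the $\rel B_n$ so that the relations contributed by $\rel B_n$ only admit witnessing configurations too large to sit inside small sets --- for instance by letting the arities, or the minimal supports of these relations, grow with~$n$ --- and by amalgamating the underlying finitely bounded homogeneous covers of the $\rel B_n$ generically enough that $\rel S$ becomes a first-order reduct of a single $\omega$-categorical structure with oligomorphic automorphism group. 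Under such an arrangement the orbit of an $n$-tuple should be governed only by the finitely many relations of small support, keeping the orbit count below doubly exponential, while \textit{(b)} holds because a polymorphism of $\rel B_N$ can be corrected outside the window $F$ into a polymorphism of the whole of $\rel S$. Carrying out this amalgamation, checking that it does not destroy $\omega$-categoricity, and pinning down the orbit bound is where the real work lies; note also that, by clauses~(2) and~(3), the resulting $\rel S$ cannot be a first-order reduct of a finitely bounded homogeneous structure, since the equivalence results of~\cite{BKOPP17a,BKOPP17} would otherwise force a uniformly continuous minion homomorphism $\Pol(\rel S)\to\Proj$.
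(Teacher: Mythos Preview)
Your overall architecture --- superposing the structures from Theorem~\ref{thm:main-1} with relations of increasing arity so that a finite window only ``sees'' finitely many factors --- is exactly the paper's construction (the paper calls the arity-boosted version $\rel S(\hH_n, g(n))$ and takes the generic superposition; see Construction~\ref{constr:s}). Your argument for~(2) is also the paper's. The gap is in~(3).

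Your condition~\textit{(b)} cannot hold as written. If the arities grow with $n$, then on a finite $F$ it is the factors $\rel B_1,\dots,\rel B_N$ with \emph{small} index whose relations have tuples supported in $F$; a polymorphism of the single factor $\rel B_N$ need not preserve the relations of $\rel B_1,\dots,\rel B_{N-1}$, and those relations \emph{do} live on $F$, so there is no reason its restriction to $F$ should extend to a polymorphism of $\rel S$. The correct version of~\textit{(b)} is that $\Pol(\rel S)|_F$ coincides with $\Pol(\rel S_N)|_F$, where $\rel S_N = \rel B_1\odot\cdots\odot\rel B_N$ is the \emph{finite} superposition. But then your argument for~(3) needs $\Pol(\rel S_N)$ --- not $\Pol(\rel B_N)$ --- to satisfy some non-trivial height~1 condition, and this is where real work is required: one must build a single polymorphism of the superposition out of polymorphisms of the separate factors, and knowing that each $\Pol(\rel B_n)$ satisfies \emph{some} $\Sigma_m$ does not do that for you. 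The paper carries this out in Lemma~\ref{lem:psiggers}, using that the quasi near unanimity polymorphisms of each factor identify no tuples beyond those forced by the identities, together with the genericity of the superposition, to obtain a quasi near unanimity polymorphism of each $\rel S_N$. From there the paper does not argue local height~1 satisfaction directly; it passes to a pseudo-Siggers polymorphism of $(\rel S,\neq)$ via compactness and then invokes Theorem~\ref{thm:equations} (from~\cite{BKOPP17}): an $\omega$-categorical model-complete core with a pseudo-Siggers polymorphism and less than doubly exponential orbit growth admits no uniformly continuous minion homomorphism to~$\Proj$. (As a side note, your closing remark overstates what~\cite{BKOPP17a,BKOPP17} prove: the implication ``minion homomorphism to $\Proj$ $\Rightarrow$ uniformly continuous one'' for reducts of finitely bounded homogeneous structures is precisely the open Question~(1) of the introduction, not a theorem.)
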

\changed{In the proof of~(3), we use the fact that $\rel S$ has slow orbit growth (by construction), and a recent result from~\cite{BKOPP17a,BKOPP17} characterizing~(3) via so-called \emph{pseudo-Siggers} operations in structures with slow orbit growth.}

\subsection*{Outline}
{This article is organized as follows. After giving the necessary precise definitions and notation in Section~\ref{sect:prelims}, we explore height~1 conditions \changed{induced by} graphs in Section~\ref{sec:sigma-g}, and construct the weakest height~1 Mal'cev condition mentioned above. In Section~\ref{sect:noweakest}, we then use the height~1 conditions which appear in our Mal'cev condition in order to prove Theorem~\ref{thm:main-1}. Finally,  in Section~\ref{sec:topology} we combine the structures thus constructed into a single structure and prove Theorem~\ref{thm:main-3}.}

\section{Notation and definitions}
  \label{sect:prelims}

We recall some basic notions from the algebraic approach to CSPs as well as \changed{notions} from model theory. We refer to~\cite{BKW17} and~\cite{Hod97} for more detailed introductions to these topics.

\subsection{Structures, polymorphisms}

A \emph{signature} is a {family} 
$\sigma=(R_i)_{i\in I}$ of symbols, where each symbol is associated with a natural number called its \emph{arity}.
A $\sigma$-structure $\rel A$ is a \changed{pair} $(A,(R^{\rel A}_i)_{i\in I})$ consisting of a set (the \emph{domain}) together with a 
{family} 
$(R^{\rel A}_i)_{i\in I}$ of relations on $A$,
where for all $i\in I$ the relation $R^{\rel A}_i$ has the arity specified by $\sigma$.

A \emph{graph} is a~structure with a~single binary symmetric relation; in particular, in this {article} 
all graphs are undirected.
We denote by $\rel K_3$ the complete graph on three vertices.

Let $\rel A, \rel B$ be two structures with the same signature (e.g., two graphs). A map $h\colon A\to B$ is a~\emph{homomorphism} from $\aA$ to $\bB$ if it preserves all relations, i.e., for all $i\in I$,
\begin{equation}\label{al:homo}
  \text{if } (a_1,\dots,a_k)\in R^{\rel A}_i \text{, then } (h(a_1),\dots,h(a_k)) \in R^{\rel B}_i.\tag{$\clubsuit$}
\end{equation}
Two structures $\rel A$ and $\rel B$ are \emph{homomorphically equivalent} if there exist homomorphisms from $\rel A$ to $\rel B$ and from $\rel B$ to $\aA$. An \emph{embedding} of $\aA$ into $\bB$ is an injective homomorphism from $\aA$ to $\bB$ such that the implication in (\ref{al:homo}) is an equivalence.

For $n\geq 1$, we define the $n$-th power of a~structure $\rel A$ to be the structure $\rel A^n$ with same signature, whose domain is $A^n$, and such that for all $i\in I$, a tuple $(\overline{a}^1,\dots,\overline{a}^k)$ of $n$-tuples is contained in $R^{\rel A^n}_i$ if, and only if, it is contained in $R^{\rel A}_i$ componentwise, i.e., $(a^1_j,\dots,a^k_j)\in R^{\rel A}_i$ for all $1\leq j\leq  n$.
For graphs, this power is often called \emph{tensor power} since the adjacency matrix of the power is a~tensor power of the adjacency matrix of the original graph.

A~\emph{polymorphism} of a structure $\rel A$ is a~homomorphism from $\rel A^n$ to $\rel A$, for some $n\geq 1$. We write $\Pol(\rel A)$ for the set of all polymorphisms of a~structure $\rel A$. An \emph{endomorphism} of $\aA$ is a homomorphism from $\aA$ to $\aA$, i.e., a unary polymorphism of $\aA$. An \emph{automorphism} of $\aA$ is a bijective embedding of $\aA$ into~$\aA$. 

\subsection{Clones and height 1 conditions}

Let $A$ be a set. A \emph{clone} is a set $\clo A$ of finitary operations on $A$ satisfying the following conditions:
\begin{itemize}
	\item for all $n\geq 1$ and $1\leq i\leq n$, the $i$-th $n$-ary projection $\proj_i^n\colon (x_1,\dots,x_n)\mapsto x_i$
	is a function in $\clo A$;
	\item for all $n$-ary $f\in\clo A$ and all $m$-ary $g_1,\dots,g_n\in\clo A$, the composition $f\circ (g_1,\dots,g_n)\colon (x_1,\dots,x_m)\mapsto f(g_1(x_1,\dots,x_m),\dots,g_n(x_1,\dots,x_m))$
	is in $\clo A$.
\end{itemize}
For $n\geq 1$, we denote by $\clo A^{(n)}$ the set of $n$-ary functions in $\clo A$. We moreover write $\Proj$ for the clone on the set $\{0,1\}$ that consists only of projections. It is easy to see that for every structure $\rel A$ the set $\Pol(\rel A)$ is a~clone.

A~\emph{height 1 identity} is a statement of the form
\[
  \forall x_1,\dots,x_r\;\; f(x_{\pi(1)},\dots,x_{\pi(n)}) = g(x_{\rho(1)},\dots,x_{\rho(m)})
\]
where $f,g$ are function symbols, and
\[\pi\colon \{1,\ldots,n\}\to\{1,\ldots,r\},\;\; \rho\colon \{1,\ldots,m\}\to\{1,\ldots,r\}\] are any functions. We also write \[f(x_{\pi(1)},\dots,x_{\pi(n)}) \approx g(x_{\rho(1)},\dots,x_{\rho(m)})\] for such an identity, omitting the universal quantification. An example of {a height~1} 
identity is the Siggers identity (\ref{eq:siggers}).

A~\emph{height 1  condition} is a~finite set $\Sigma$ of height 1 identities (where several identities can \changed{involve} the same function symbol). Such a~condition is said to be satisfied in a~set of functions $\clo A$ (e.g.\ a~clone) if for each function symbol $f$ appearing in $\Sigma$, there exists a~function $f^{\clo A} \in \clo A$ of the corresponding arity such that every identity in $\Sigma$ becomes a true statement when the symbols of $\Sigma$ are instantiated by their counterparts in $\clo A$. In that case, we   say that \emph{$\clo A$ satisfies $\Sigma$}.

The notion of satisfaction in a~clone \changed{leads to} a~natural quasi-order on height~1 conditions: if every clone \changed{that satisfies} $\Sigma$ also satisfies $\Sigma'$, then we say that $\Sigma$ \emph{implies} $\Sigma'$ (or that $\Sigma'$ is \emph{weaker} than $\Sigma$, or that $\Sigma$ is \emph{stronger} than $\Sigma'$).  Two conditions $\Sigma$ and $\Sigma'$ are {\emph{equivalent}} 
\changed{if they belong to the same equivalence class of the quasi-order, i.e., if every clone satisfies either both or neither of $\Sigma$ and $\Sigma'$.}
The \emph{strictly decreasing chain} {mentioned after Theorem~\ref{thm:main-1}} 
is to be understood in this quasi-order, i.e., it is a sequence of conditions of strictly decreasing strength.

A~height~1 condition is \emph{trivial} if it is satisfied in every clone, or equivalently, if it holds in $\Proj$, or again equivalently, if it is implied by any other height~1 condition.

\subsection{Minion homomorphisms}
Let $\clo A$, $\clo B$ be two clones. We say that a mapping $\xi\colon \clo A\to \clo B$ is a~\emph{minion homomorphism}\footnote{{This notion was introduced as \emph{h1 clone homomorphism} in~\cite{BOP18}.} A~\emph{minion} is a certain type of abstract algebraic structure and minion homomorphisms as introduced here correspond to the natural maps between minions. The definition of a~minion \cite[Definition 2.20]{BBKO19} is irrelevant for our purposes so we omit it.}
if it preserves arities, and for all $f\in \clo A$ of arity $n$ and all $\pi\colon \{1,\ldots,n\} \to \{1,\ldots,k\}$ we have
\[
  \xi(f(x_{\pi(1)},\dots,x_{\pi(n)})) = \xi(f)(x_{\pi(1)},\dots,x_{\pi(n)}).
\]

Note that a~minion homomorphism preserves height 1 identities, and therefore also height 1 conditions, i.e., if there is a~minion homomorphism from $\clo A$ to $\clo B$ and $\Sigma$ is a~height 1 condition such that $\clo A$ satisfies $\Sigma$, then also $\clo B$ satisfies $\Sigma$.
In particular, if there exists a~minion homomorphism from $\clo A$ to the projection clone $\Proj$, then $\clo A$ only satisfies trivial height~1 conditions.
The converse of the latter statement also holds as can be proved by a~compactness argument; it also follows from Lemma~\ref{lem:1.2} of the present article.

When $\clo A$ and $\clo B$ are clones, then {an arity-preserving} map $\xi\colon\clo A\to\clo B$ is called \emph{uniformly continuous}\footnote{Clones can be naturally endowed with a uniform structure. The notion of uniform continuity corresponding to this uniform structure agrees with the definition given here; the interested reader will find details in~\cite{uniformbirkhoff, BOP18}.} if for every $n\geq 1$ {and every finite set $F\subseteq B^n$ there exists a~finite set $S\subseteq A^n$ such that $f|_{S} = g|_{S}$ implies $\xi(f)|_{F} = \xi(g)|_{F}$ for all $f,g\in\clo A^{(n)}$}. The non-existence of uniformly continuous minion homomorphisms from a clone $\clo A$ to $\Proj$ is equivalent to  non-trivial height~1 conditions being satisfied in $\clo A$ \emph{locally}, in the following sense.
Let $\Sigma$ be a height~1 condition, and let $S\subseteq A$. We say that $\clo A$ \emph{satisfies $\Sigma$ on $S$} if it satisfies $\Sigma$ when the quantified variables in the identities of $\Sigma$ only range over $S$ (rather than $A$); i.e., the identities of $\Sigma$ are replaced by formulas of the form
\[
  \forall x_1,\dots,x_r \in S\;\; f(x_{\pi(1)},\dots,x_{\pi(n)}) = g(x_{\rho(1)},\dots,x_{\rho(m)})\; .
\]
Then there is no uniformly continuous minion homomorphism from $\clo A$ to $\Proj$ if and only if  for every finite subset $S$ of $A$ there exists a non-trivial height~1 condition that is satisfied by $\clo A$ on $S$ (again, this can be proved by a~compactness argument).

\subsection{Logic and model theory}

The set of automorphisms of $\aA$ forms a~group denoted by $\Aut(\aA)$. For all $k\geq 1$, this group acts naturally on $A^k$ by $\alpha\cdot(a_1,\dots,a_k) := (\alpha(a_1),\dots,\alpha(a_k))$.
An \emph{orbit} is a~set of the form $\{\alpha\cdot \overline a\mid \alpha\in\Aut(\aA)\}$ for some $\overline a\in A^k$.
{The function $f$ which assigns to every $k\geq 1$ the cardinality of the set of orbits of $\Aut(\aA)$ on $k$-tuples is non-decreasing, and its growth is an interesting measure of the combinatorial complexity of $\aA$. If $f$ only takes finite values, {and the domain of $\rel A$ countable,} then we say that $\aA$ is \emph{$\omega$-categorical}. We say that $\aA$ has \emph{less than doubly exponential orbit growth} if $\lim_{k\to\infty} {f(k)}/{2^{2^k}} = 0$.}
A~\emph{stabilizer} of a~group $\Aut(\aA)$ (resp.\ a clone $\Pol(\aA)$) is a~group of the form $\Aut(\aA,a_1,\dots,a_k)$ (resp.\ a~clone of the form $\Pol(\aA,a_1,\dots,a_k)$) where $a_1,\dots,a_k$ are elements from~$A$; here, $(\aA,a_1,\dots,a_k)$ denotes the expansion of $\aA$ by the unary relations $\{a_1\},\ldots,\{a_k\}$.

A first-order formula $\phi(x_1,\dots,x_n)$ is \emph{primitive positive} (\emph{pp}, for short) if it is of the form $\exists y_1,\dots,y_m\;\bigwedge_i R_i(\overline{z_i})$.
A relation $R\subseteq A^n$ is \emph{first-order definable} (resp.\ \emph{pp-definable}) in $\aA$ 
if there exists a~first-order formula $\phi$ (resp.\ pp-formula) such that $(a_1,\dots,a_n)\in R$ if and only if $\phi(a_1,\dots,a_n)$ holds in $\aA$, for all $a_1,\ldots,a_n\in A$.
A structure $\bB$ is a~\emph{first-order reduct} of $\aA$ if $\bB$ and $\aA$ have the same domain and if every relation of $\bB$ is first-order definable in $\aA$.

Uniformly continuous minion homomorphisms between clones have a~counterpart for relational structures which we define next.
Let $\rel {A}, \rel {B}$ be relational structures. We say that $\rel{B}$ is a~\emph{pp-power} of $\rel{A}$ if it is isomorphic to a~structure with domain $A^n$, where $n\geq 1$, whose relations are pp-definable from $\rel{A}$; here, a~$k$-ary relation on $A^n$ is regarded as a~$(k\cdot n)$-ary relation on $A$. We say that $\bB$ is \emph{pp-constructible} from $\rel{A}$ if it is homomorphically equivalent to a~pp-power of $\rel{A}$.
The following theorem ties together the notions of pp-constructibility and minion homomorphisms.
\begin{theorem}[Theorem 1.8 in~\cite{BOP18}]\label{thm:bop18}
	Let $\rel A$ be an $\omega$-categorical structure and let $\rel B$ be a~finite structure. Then $\rel B$ is pp-constructible from $\rel A$ if, and only if,
	there exists a~uniformly continuous minion homomorphism from $\Pol(\rel A)$ to $\Pol(\rel B)$.
\end{theorem}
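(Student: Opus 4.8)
The plan is to prove the two implications separately. The forward implication (pp\nobreakdash-constructibility $\Rightarrow$ existence of a uniformly continuous minion homomorphism) holds for arbitrary $\rel A$; only the reverse implication uses $\omega$-categoricity.

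\emph{Forward direction.} Since pp-constructibility is by definition the composition of ``taking a pp-power'' with ``passing to a homomorphically equivalent structure,'' and since uniformly continuous minion homomorphisms are closed under composition, it suffices to handle these two operations individually. If $\rel B$ is a pp-power of $\rel A$ of dimension $n$, then $f\mapsto\xi(f)$, where $\xi(f)$ is the operation on $A^n$ obtained by letting $f$ act coordinatewise, is a minion homomorphism $\Pol(\rel A)\to\Pol(\rel B)$ --- that $\xi(f)$ preserves the relations of $\rel B$ is the standard fact that polymorphisms are inherited by pp-definitions --- and it is uniformly continuous because $\xi(f)$ on a finite set $F\subseteq(A^n)^m$ is determined by the values of $f$ on the finitely many ``columns'' extracted from $F$. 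If $\rel B$ and $\rel C$ are homomorphically equivalent via $h\colon\rel B\to\rel C$ and $g\colon\rel C\to\rel B$, then $f\mapsto h\circ f\circ(g,\dots,g)$ is a minion homomorphism $\Pol(\rel B)\to\Pol(\rel C)$, uniformly continuous since a finite subset of $C^m$ is pulled back by $g$ to a finite subset of $B^m$. Composing gives the desired map.

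\emph{Reverse direction.} Fix a uniformly continuous minion homomorphism $\xi\colon\Pol(\rel A)\to\Pol(\rel B)$, write $k:=|B|$ and identify $B$ with $\{1,\dots,k\}$. The idea is to realise a finitised version of the ``free structure on $\rel B$ over the minion $\Pol(\rel A)$'' as a genuine finite-dimensional pp-power of $\rel A$. By uniform continuity applied at arity $k$ to the finite set $B^k$, there is a finite $S=\{s_1,\dots,s_n\}\subseteq A^k$ with $f|_S=f'|_S\Rightarrow\xi(f)=\xi(f')$ for all $f,f'\in\Pol(\rel A)^{(k)}$. Define $\rel D$ over the signature of $\rel B$ with domain $A^n$ by the following rule: for an $r$-ary symbol $R$ with $R^{\rel B}=\{\bar b^1,\dots,\bar b^t\}$, and $\pi_l\colon\{1,\dots,t\}\to\{1,\dots,k\}$ given by $\pi_l(i)=b^i_l$, let $R^{\rel D}$ be the set of tuples $\bigl((f(u^1_l),\dots,f(u^n_l))\bigr)_{l=1}^r$ with $f\in\Pol(\rel A)^{(t)}$ and $u^j_l:=(s_j(\pi_l(1)),\dots,s_j(\pi_l(t)))\in A^t$. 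Then I claim: (a) $\rel D$ is a pp-power of $\rel A$; (b) $i\mapsto(s_1(i),\dots,s_n(i))$ is a homomorphism $\rel B\to\rel D$ (take $f$ a projection); and (c) the map sending $\bar a\in A^n$ to $\xi(f)(1,\dots,k)$ whenever $\bar a=f|_S$ for some $f\in\Pol(\rel A)^{(k)}$, and to anything otherwise, is a well-defined homomorphism $\rel D\to\rel B$. This makes $\rel D$ and $\rel B$ homomorphically equivalent, so $\rel B$ is pp-constructible from $\rel A$.

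\emph{Where the work lies.} Steps (a) and (c) are where the hypotheses enter, and (c) is the step I expect to be the main obstacle. After flattening, $R^{\rel D}$ has the form $\{(f(v^1),\dots,f(v^p)):f\in\Pol(\rel A)^{(t)}\}$ for fixed $v^1,\dots,v^p\in A^t$; this is the smallest $\Pol(\rel A)$-invariant $p$-ary relation containing the $t$ ``rows'' of the $v^i$, and since $\rel A$ is $\omega$-categorical every such relation is pp-definable in $\rel A$ (the $\mathrm{Inv}$--$\mathrm{Pol}$ Galois connection for $\omega$-categorical structures; equivalently, there are finitely many orbits of tuples, hence finitely many pp-definable relations of each arity, so the closure is finitary) --- this is the one essential use of $\omega$-categoricity. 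Well-definedness of the map in (c) is exactly the statement of uniform continuity of $\xi$. The homomorphism check in (c) is the main bookkeeping: if $\bigl((f(u^j_l))_j\bigr)_l\in R^{\rel D}$, then $(f(u^j_l))_j=f'_l|_S$ where $f'_l$ is the minor $f(x_{\pi_l(1)},\dots,x_{\pi_l(t)})$ of $f$, so the image of this tuple under the map is $\bigl(\xi(f'_l)(1,\dots,k)\bigr)_l=\bigl(\xi(f)(\pi_l(1),\dots,\pi_l(t))\bigr)_l$ (using that $\xi$ is a minion homomorphism), which --- since $\pi_l(i)=b^i_l$ --- is precisely $\xi(f)$ applied coordinatewise to $\bar b^1,\dots,\bar b^t\in R^{\rel B}$, hence lies in $R^{\rel B}$ because $\xi(f)\in\Pol(\rel B)$. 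Coordinating the minor maps $\pi_l$, the minion-homomorphism identity, and the evaluation at the ``diagonal enumeration'' $(1,\dots,k)$ is the delicate part of the write-up, though each individual step is routine.
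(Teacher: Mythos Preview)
The paper does not give its own proof of this theorem: it is stated as Theorem~1.8 of~\cite{BOP18} and cited without proof (only the remark that the ``only if'' direction holds for arbitrary structures is added). There is therefore nothing in the paper to compare your argument against.

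That said, your sketch is correct and is essentially the standard argument from~\cite{BOP18}. The forward direction is the routine observation that pp-powers and homomorphic equivalence each yield uniformly continuous minion homomorphisms. For the converse, your use of uniform continuity at arity $k=|B|$ to extract a finite determining set $S\subseteq A^k$, the definition of the pp-power $\rel D$ on $A^{|S|}$ via $\Pol(\rel A)$-generated relations, and the verification that $\xi$ induces a homomorphism $\rel D\to\rel B$ are all sound. The two points you flag as delicate are indeed the only places where the hypotheses bite: $\omega$-categoricity is used exactly once, to guarantee that the $\Pol(\rel A)$-invariant relation generated by finitely many tuples is pp-definable (via the $\mathrm{Inv}$--$\mathrm{Pol}$ Galois connection for $\omega$-categorical structures), and uniform continuity is used exactly once, for well-definedness of the map $\rel D\to\rel B$. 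Your bookkeeping for the homomorphism check in~(c) is correct; in particular, every element of $A^n$ occurring in a tuple of $R^{\rel D}$ is of the form $f'|_S$ for some minor $f'$ of some $f$, so the ``anything otherwise'' clause is harmless.
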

We note, and are going to use, that the ``only if'' part of the statement above holds for arbitrary structures $\rel A$ and $\rel B$.

If $\aA$ and $\bB$ are $\sigma$-structures such that $B\subseteq A$ and such that for every $R\in\sigma$ of arity $k$, $R^{\rel A}\cap B^k = R^{\rel B}$, then we say
that $\bB$ is a~\emph{substructure} of $\aA$.
A structure $\aA$ is \emph{homogeneous} if for every two finite substructures $\rel B,\rel C$ and every isomorphism $f\colon\rel B\to\rel C$, there exists an automorphism $\alpha$ of~$\rel A$
such that $\alpha|_B = f$.
We note that if $\aA$ is homogeneous and its signature is finite, then $\aA$ has less than doubly exponential orbit growth.

A structure $\rel A$ {in a finite signature} is \emph{finitely bounded} if there exists a~finite set $\mathcal F$ of finite structures such that for every finite structure $\rel B$ with the same signature as $\rel A$,
$\rel B$ embeds into $\rel A$ if, and only if, no structure from $\mathcal F$ embeds into $\rel B$.
This is equivalent to saying that the class of finite substructures of $\rel A$ is definable by a~first-order universal sentence.

{The structures constructed in Theorem~\ref{thm:main-1} are finitely bounded but not necessarily homogeneous.
However, they are \emph{homogenizable} in the sense that they can be made homogeneous by adding finitely many relations.
In particular, they belong to the class of \emph{reducts of finitely bounded homogeneous structures}, which is the scope of the
infinite-domain tractability conjecture (Conjecture~\ref{conj:tract2}).}

An $\omega$-categorical structure $\rel A$ is a \emph{model-complete core} if for every endomorphism $e\colon\rel A\to\rel A$ and every finite subset $S$ of $A$, there exists an automorphism $\alpha\in\Aut(\rel A)$
such that $\alpha|_S = e|_S$. 

\begin{theorem}[\cite{cores-journal,BKOPP17a}]\label{thm:existence-mc-core}
Let $\rel A$ be $\omega$-categorical. There exists an $\omega$-categorical model-complete core\/ $\rel B$ that is homomorphically equivalent to $\rel A$.
Moreover, $\rel B$ is {$\omega$-categorical and} unique up to isomorphism.
\end{theorem}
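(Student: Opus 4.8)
The plan is to obtain $\rel B$ as the image of a suitably chosen endomorphism of $\rel A$, and to prove uniqueness by a back-and-forth argument powered by $\omega$-categoricity.

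For existence, I first want to single out a ``minimal'' endomorphism. For an endomorphism $f$ of $\rel A$ and each $n\ge 1$, consider the set of orbits of $\Aut(\rel A)$ on $A^n$ that are realized by some $n$-tuple of elements of $f(A)$; since $\rel A$ is $\omega$-categorical this set is finite (indeed a subset of the finite set of all orbits on $A^n$). Composing an endomorphism with another endomorphism on the left can only shrink the image, hence can only shrink these sets of realized orbits, so a compactness (König's lemma) argument — choosing the values of the endomorphism one element at a time while keeping all these sets as small as possible in every arity — produces an endomorphism $e$ that is \emph{reduced} in the sense that for every endomorphism $g$ of $\rel A$ the composite $e\circ g$ realizes exactly the same orbits as $e$ (no more, since its image is smaller; no fewer, by minimality).

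Now let $\rel B$ be the substructure of $\rel A$ induced on $B:=e(A)$. It is homomorphically equivalent to $\rel A$, via $e\colon \rel A\to\rel B$ and the inclusion $\rel B\hookrightarrow\rel A$. Three things remain. First, $e$ is an embedding of $\rel A$ into itself: a failure of injectivity, or a tuple mapped by $e$ into a ``strictly larger'' orbit, could be combined with a suitable endomorphism to strictly shrink a realized-orbit set, contradicting reducedness. Second, $\rel B$ is $\omega$-categorical: by a back-and-forth argument using $e$, automorphisms of $\rel A$, and reducedness, one shows that any two tuples of $B$ lying in the same $\Aut(\rel A)$-orbit already lie in the same $\Aut(\rel B)$-orbit; hence the orbits of $\Aut(\rel B)$ on $B^n$ are exactly the nonempty traces $\mathcal O\cap B^n$ of the orbits $\mathcal O$ of $\Aut(\rel A)$ on $A^n$, of which there are finitely many. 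Third, $\rel B$ is a model-complete core: given an endomorphism $h$ of $\rel B$ and a finite $S\subseteq B$, the same back-and-forth machinery (applied to $h$ in place of an automorphism) yields an automorphism of $\rel B$ agreeing with $h$ on $S$.

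For uniqueness, let $\rel B$ and $\rel B'$ be $\omega$-categorical model-complete cores homomorphically equivalent to $\rel A$, and fix homomorphisms $f\colon\rel B\to\rel B'$ and $g\colon\rel B'\to\rel B$. Since $\rel B$ is a model-complete core, $g\circ f$ lies in the closure of $\Aut(\rel B)$ in the topology of pointwise convergence, hence is a pointwise limit of automorphisms and therefore an \emph{elementary} self-embedding of $\rel B$; symmetrically $f\circ g$ is an elementary self-embedding of $\rel B'$. In particular $f$ and $g$ are injective and reflect atomic relations, so both are embeddings. I would then prove, by a simultaneous induction on the structure of formulas, that $f$ and $g$ are in fact \emph{elementary} embeddings: in the reflection step for an existential formula one pulls a witness in $\rel B'$ back through $g$, applies the lower-complexity induction hypothesis for $g$, and then uses that $g\circ f$ is elementary together with $\omega$-categoricity (Ryll--Nardzewski: equal complete types give the same $\Aut(\rel B)$-orbit) to relocate the witness into the required orbit. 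Having elementary embeddings in both directions forces $\rel B\equiv\rel B'$; thus $\rel B'$ is a model of the first-order theory of $\rel B$, which is $\omega$-categorical and so has a unique countable model, whence $\rel B\cong\rel B'$.

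The main obstacle is the treatment of the image $\rel B$ in the existence part: a substructure of an $\omega$-categorical structure need not be $\omega$-categorical, and it is precisely the reducedness of $e$ that excludes the pathological cases, both in the back-and-forth establishing $\omega$-categoricity of $\rel B$ and in verifying that $e$ is an embedding. Subsidiary care is needed for the compactness argument producing the reduced endomorphism, and, throughout, for the gap between being a partial isomorphism (quantifier-free types) and being in the same orbit (complete types) — a gap that is controlled here precisely because in a model-complete core every endomorphism is an elementary self-embedding.
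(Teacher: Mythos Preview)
The paper does not give its own proof of this theorem; it is quoted as a background result from the cited references. Your outline follows the standard architecture of those proofs (choose an endomorphism with a ``minimal'' image, take the induced substructure, do back-and-forth for uniqueness), so there is nothing to compare against in the present paper.

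That said, your existence argument contains a genuine error. You assert that the reduced endomorphism $e$ must be an \emph{embedding} of $\rel A$ into itself, reasoning that a failure of injectivity could be exploited to shrink the realized-orbit set further. This is false. Take $\rel A$ to be a countably infinite pure set (empty signature, or just equality). Every function $A\to A$ is an endomorphism; any constant map $e$ has image a single point, realizes only the diagonal orbits in each arity, and is minimal in your sense --- yet it is not injective. The model-complete core of $\rel A$ is a one-point structure, and $e$ is the right map to produce it. More generally, whenever the core is finite (which is certainly allowed) no endomorphism onto it can be an embedding of the infinite $\rel A$.

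This does not wreck the plan, but the repair is to drop the embedding claim entirely and argue directly about $\rel B=e(A)$. What minimality actually buys you is that every endomorphism $h$ of $\rel B$ (note $h\circ e$ is again an endomorphism of $\rel A$) cannot lose any $\Aut(\rel A)$-orbit realized in $B$; from this, the back-and-forth showing that $h$ agrees with an automorphism of $\rel B$ on any finite set goes through, and $\omega$-categoricity of $\rel B$ follows by the same mechanism (with the finite case trivial). Your uniqueness argument is essentially correct, though the induction on formula complexity is heavier than necessary: once $g\circ f$ and $f\circ g$ are known to be pointwise limits of automorphisms, a direct back-and-forth between $\rel B$ and $\rel B'$ already yields the isomorphism.
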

The structure $\rel B$ in the theorem above is referred to as \emph{the model-complete core of $\rel A$}.

\section{Siggers-like conditions associated with graphs}
  \label{sec:sigma-g}

We show that for any non-trivial height 1 condition $\Sigma$, there is a non-trivial height 1 condition of a certain specific form, \changed{encoded by} a finite undirected graph, which is implied by $\Sigma$. Namely, from any finite undirected graph ${\rel G} = (V, E)$, one can construct a height 1 condition $\Sigma_{\rel G}$ in the following way: for each $v\in V$, one introduces a~ternary function  symbol $f_v$, and for each edge $(u,v)\in E$, one introduces a~$6$-ary symbol $g_{(u,v)}$, and adds to $\Sigma_{\rel G}$ the identities
\begin{align*}
  f_u(x,y,z) &\equals g_{(u,v)}( x,y, x,z, y,z ) \\
  f_v(x,y,z) &\equals g_{(u,v)}( y,x, z,x, z,y ).
\end{align*}
This corresponds to the condition $\Sigma(\rel K_3,\rel G)$ constructed in~\cite[Section 3.2]{BBKO19}. 
To give a~simple example, observe that if ${\rel G}$ consist of a~single vertex $v$ with an edge  $(v,v)$, then $\Sigma_{\rel G}$ is the Siggers condition (the function $g_{(v,v)}$ must satisfy the Siggers identity). We are now going to see that the Siggers condition is the strongest condition of this form; for clones over finite sets, it follows from~\cite{Sig10} that 
it is also a weakest among all non-trivial height~1 conditions, and thus \changed{for clones over finite sets} all non-trivial conditions of the form $\Sigma_{\rel G}$ are equivalent.

\begin{lemma} \label{lem:hom-between-sigma_g} \label{lem:3.1}
  Let ${\rel G}$ and ${\rel H}$ be finite graphs. If ${\rel G}$ maps homomorphically into ${\rel H}$, then $\Sigma_{\rel H}$ implies $\Sigma_{\rel G}$.
\end{lemma}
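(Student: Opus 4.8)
The plan is to prove the following slightly more flexible statement, from which the lemma is immediate: satisfaction of $\Sigma_{\rel G}$ pulls back along graph homomorphisms. So I would fix a homomorphism $\phi\colon\rel G\to\rel H$ and an arbitrary clone $\clo A$ satisfying $\Sigma_{\rel H}$, together with witnessing functions $f_w^{\clo A}\in\clo A$ for the vertices $w$ of $\rel H$ and $g_e^{\clo A}\in\clo A$ for the edges $e$ of $\rel H$, and then exhibit witnesses showing that $\clo A$ also satisfies $\Sigma_{\rel G}$.

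The witnesses for $\Sigma_{\rel G}$ would be obtained simply by composing the labelling with $\phi$: set $f_v^{\clo A}:=f_{\phi(v)}^{\clo A}$ for each vertex $v$ of $\rel G$ and $g_{(u,v)}^{\clo A}:=g_{(\phi(u),\phi(v))}^{\clo A}$ for each edge $(u,v)$ of $\rel G$. The second assignment makes sense precisely because $\phi$ is a graph homomorphism, so that $(\phi(u),\phi(v))$ is an edge of $\rel H$ whenever $(u,v)$ is an edge of $\rel G$; this also silently handles the degenerate cases $u=v$, and $u\neq v$ with $\phi(u)=\phi(v)$, in which the relevant edge of $\rel H$ is a loop. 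Then for each edge $(u,v)$ of $\rel G$, substituting the definitions into the two identities of $\Sigma_{\rel G}$ attached to it yields
\begin{align*}
 f_{\phi(u)}^{\clo A}(x,y,z) &= g_{(\phi(u),\phi(v))}^{\clo A}(x,y,x,z,y,z),\\
 f_{\phi(v)}^{\clo A}(x,y,z) &= g_{(\phi(u),\phi(v))}^{\clo A}(y,x,z,x,z,y),
\end{align*}
which are exactly the two identities of $\Sigma_{\rel H}$ attached to the edge $(\phi(u),\phi(v))$ of $\rel H$, and therefore hold in $\clo A$ by hypothesis. Since $\clo A$ was an arbitrary clone satisfying $\Sigma_{\rel H}$, this gives that $\Sigma_{\rel H}$ implies $\Sigma_{\rel G}$.

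I do not expect any genuine obstacle here: the argument is a direct unwinding of the definitions. The only points that call for a little care are bookkeeping ones, namely that graph homomorphisms send edges to edges, \emph{including loops}, so that $g_{(\phi(u),\phi(v))}^{\clo A}$ is always available; and that the variable patterns $(x,y,x,z,y,z)$ and $(y,x,z,x,z,y)$ occurring in $\Sigma_{\rel G}$ and $\Sigma_{\rel H}$ coincide verbatim, so no reindexing of variables is needed when transferring the identities. One could alternatively derive the lemma from the fact that $\rel G\mapsto\Sigma_{\rel G}$ is, up to equivalence, the construction $\rel G\mapsto\Sigma(\rel K_3,\rel G)$ of~\cite{BBKO19}, which is order-reversing with respect to the homomorphism order on graphs, but the direct verification above is shorter and self-contained.
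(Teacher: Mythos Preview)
Your proposal is correct and follows essentially the same approach as the paper: fix a homomorphism $\phi\colon\rel G\to\rel H$, take witnesses for $\Sigma_{\rel H}$ in a clone, and pull them back by setting $f_v:=f_{\phi(v)}$ and $g_{(u,v)}:=g_{(\phi(u),\phi(v))}$. The paper's proof is slightly terser, omitting the explicit verification of the identities and the remark about loops, but the argument is identical.
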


\begin{proof}
  Assume that $\Sigma_{\rel H}$ is satisfied in some clone $\clo C$, and fix functions $f_v\in\clo C$ for every vertex $v$ of $\hH$ and functions $g_e\in\clo C$ for every edge $e$ of $\hH$ witnessing this fact. Let $h\colon {\rel G}\to {\rel H}$ be a~homomorphism. For every vertex $v$ of $\gG$ we set $f'_v:=f_{h(v)}$, and for every edge $(u,v)$ of $\gG$ we set $g'_{(u,v)} = g_{(h(u),h(v))}$ (using the fact that $h$ is a~homomorphism). Then these functions witness the satisfaction of $\Sigma_{\rel G}$ in $\clo C$.
\end{proof}

\changed{We show next that} the condition $\Sigma_{\rel G}$ essentially forces the graph ${\rel G}$ into any graph which is compatible with $\Sigma_{\rel G}$ and which contains $\rel K_3$.

\begin{lemma} \label{lem:sigma_g}
  Let ${\rel G}$ be a~graph. Then ${\rel G}$ maps homomorphically to any graph ${\rel H}$ that contains $\rel K_3$, and whose polymorphisms satisfy $\Sigma_{\rel G}$.
\end{lemma}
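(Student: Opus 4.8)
The plan is to extract a homomorphism $\gG \to \hH$ directly from the polymorphisms of $\hH$ witnessing $\Sigma_{\gG}$, using a fixed copy of $\rel K_3$ inside $\hH$ to evaluate the ternary symbols. Concretely, fix an embedding of $\rel K_3$ into $\hH$ and name its three vertices $0,1,2 \in H$; since $\rel K_3$ has no loops, the three pairs $(0,1),(0,2),(1,2)$ (and their reverses) are edges of $\hH$. By hypothesis there are functions $f_v \in \Pol(\hH)$ for each vertex $v$ of $\gG$ and $g_e \in \Pol(\hH)$ for each edge $e$ of $\gG$ satisfying the defining identities of $\Sigma_{\gG}$. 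Define $\varphi \colon V(\gG) \to H$ by $\varphi(v) := f_v(0,1,2)$. I claim $\varphi$ is a homomorphism from $\gG$ to $\hH$.

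The key step is verifying that $\varphi$ preserves edges. Let $(u,v)$ be an edge of $\gG$. The two identities for this edge, instantiated at $(x,y,z) = (0,1,2)$, give
\[
  f_u(0,1,2) = g_{(u,v)}(0,1,0,2,1,2), \qquad f_v(0,1,2) = g_{(u,v)}(1,0,2,0,2,1).
\]
Now I want to apply $g_{(u,v)}$, which is a $6$-ary polymorphism of $\hH$, to a suitable $6$-tuple of \emph{edges} of $\hH$ so that the two $6$-tuples above appear as the ``first coordinates'' and ``second coordinates'' of those edges. Reading the six columns pairwise, I need the pairs $(0,1),(1,0),(0,2),(2,0),(1,2),(2,1)$ all to be edges of $\hH$ — which is exactly what the embedded $\rel K_3$ guarantees (each of the three $\rel K_3$-edges contributes a pair and its reverse). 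Since $g_{(u,v)}$ preserves the edge relation of $\hH$ when applied coordinatewise to these six edges, the pair
\[
  \bigl(g_{(u,v)}(0,1,0,2,1,2),\ g_{(u,v)}(1,0,2,0,2,1)\bigr) = \bigl(f_u(0,1,2),\ f_v(0,1,2)\bigr) = \bigl(\varphi(u),\varphi(v)\bigr)
\]
is an edge of $\hH$. Hence $\varphi$ is a homomorphism $\gG \to \hH$, as desired.

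The main thing to be careful about is the bookkeeping of which $6$-tuple of edges to feed into $g_{(u,v)}$: one must check that the pattern of arguments $x,y,x,z,y,z$ versus $y,x,z,x,z,y$ appearing in the two $\Sigma_{\gG}$-identities, when the variables are set to the three distinct vertices $0,1,2$ of the embedded $\rel K_3$, produces column pairs that are all genuine (non-loop) edges of $\hH$ — this is precisely why the hypothesis ``$\hH$ contains $\rel K_3$'' (a triangle without loops) is exactly what is needed, and it is the reason the argument fails for graphs with fewer than three mutually adjacent vertices. There is no real obstacle beyond this verification; once the column-pairs are seen to be edges, the polymorphism property of $g_{(u,v)}$ does the rest. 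I should also note in passing that $\varphi$ need not be injective, but that is irrelevant since only a homomorphism is claimed.
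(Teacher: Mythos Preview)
Your proof is correct and follows essentially the same approach as the paper's own proof: fix three vertices of an embedded $\rel K_3$, define the map $v\mapsto f_v$ evaluated at those vertices, and verify edge-preservation by reading off the column pairs in the $\Sigma_{\gG}$-identities and invoking that $g_{(u,v)}$ is a polymorphism. The only differences are notational.
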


\begin{proof}
   Let $v_1,v_2$ and $v_3$ be vertices of some copy of $\rel K_3$ in ${\rel H}$, and assume that we have polymorphisms of ${\rel H}$ satisfying the condition $\Sigma_{\rel G}$. Fix for every vertex $v$ of \changed{$\gG$} a function $f_v$ and for every edge $e$ of \changed{$\gG$} a function $g_e$ which witness this fact. We claim that the mapping $h\colon \gG\to\hH$ which sends every vertex $v$ of $\gG$ to $f_v(v_1,v_2,v_3)$ is a~homomorphism. Indeed, if $(u,v)$ is an edge of ${\rel G}$ then we get
  \begin{align*}
    f_u(v_1,v_2,v_3) &= g_{(u,v)}(v_1,v_2,v_1,v_3,v_2,v_3) \\
    f_v(v_1,v_2,v_3) &= g_{(u,v)}(v_2,v_1,v_3,v_1,v_3,v_2)\; .
  \end{align*}
Since $g_{(u,v)}$ is a polymorphism of $\hH$, and since $(v_i,v_j)$ is an edge in $\hH$ for all $i\neq j$, we get that $g_{(u,v)}(v_1,v_2,v_1,v_3,v_2,v_3)$ and $g_{(u,v)}(v_2,v_1,v_3,v_1,v_3,v_2)$ are related by an edge in $\hH$. Hence, $(h(u),h(v)) = (f_u(v_1,v_2,v_3),f_v(v_1,v_2,v_3))$ is an edge of~${\rel H}$.
\end{proof}

Finally, these tools allow us to provide a simple criterion for the triviality of conditions of the form $\Sigma_{\rel G}$. Even though the following lemma follows directly from \cite[Lemma 3.14]{BBKO19}, we include a~proof for completeness.

\begin{lemma}(cf.\ \cite[Lemma 3.14]{BBKO19}) \label{lem:1.1}
  For any finite graph ${\rel G}$, the condition $\Sigma_{\rel G}$ is trivial if and only if ${\rel G}$ is $3$-colorable.
\end{lemma}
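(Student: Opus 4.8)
The plan is to prove both directions using the two preceding lemmas together with the fact that $\Pol(\rel K_3) = \Proj$ up to minion homomorphism (the classical statement that $3$-coloring is $\NP$-hard, or rather its algebraic counterpart from the finite-domain theory: there is a minion homomorphism $\Pol(\rel K_3) \to \Proj$, and in fact these two clones are ``minion-equivalent'').

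For the direction ``$\rel G$ $3$-colorable $\Rightarrow$ $\Sigma_{\rel G}$ trivial'': if $\rel G$ is $3$-colorable then there is a homomorphism $\rel G \to \rel K_3$. By Lemma~\ref{lem:3.1}, $\Sigma_{\rel K_3}$ implies $\Sigma_{\rel G}$. So it suffices to show $\Sigma_{\rel K_3}$ is trivial, i.e.\ satisfied in $\Proj$. One way: exhibit explicit projections for the symbols $f_v$ ($v \in V(\rel K_3)$) and $g_e$ ($e \in E(\rel K_3)$). Label the three vertices $1,2,3$; set $f_i := \proj^3_i$ (the $i$-th ternary projection on $\{0,1\}$), and for each edge $(i,j)$ with $i\neq j$ choose the $6$-ary projection $g_{(i,j)}$ that picks out the coordinate which, under the substitution pattern $(x,y,x,z,y,z)$ on the $f_i$-side, yields $x$ (coordinate $1$ or $3$) and under $(y,x,z,x,z,y)$ on the $f_j$-side yields $x$ as well — wait, these must be consistent, so instead one checks directly: the identity $f_i(x,y,z) \approx g_{(i,j)}(x,y,x,z,y,z)$ and $f_j(x,y,z)\approx g_{(i,j)}(y,x,z,x,z,y)$ must both hold in $\Proj$. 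Picking $g_{(i,j)} = \proj^6_k$ for a suitable $k$ depending on the pair $(i,j)$ works because the $k$-th entries of $(x,y,x,z,y,z)$ and $(y,x,z,x,z,y)$ must equal $x_i$ and $x_j$ respectively, and for each ordered pair $(i,j)$ with $i\neq j$ there is such a $k$ (e.g.\ $(1,2)\mapsto$ coordinate where first is $x$ and second is $y$, etc.). I would tabulate the six coordinate pairs $\{(x,y),(x,z),(y,z)\}$ (read off positions $\{(1,2),(3,4),(5,6)\}$ — no: positions give $((x,y),(x,z),(y,z))$ on the left and $((y,x),(z,x),(z,y))$ on the right, so coordinate $1$ gives $(x,y)=(\proj_1,\proj_2)$, coordinate $2$ gives $(y,x)$, etc.) and observe each of the three unordered vertex-pairs is realized with both orientations, so every edge of $\rel K_3$ gets a valid projection for $g$. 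Alternatively, and more cleanly, invoke Lemma~\ref{lem:sigma_g} backwards is not available, so I would just do the finite check or cite that $\Sigma_{\rel K_3}$ is (equivalent to) the Siggers condition, which is nontrivial — so this cleaner route does not close the direction; the explicit projection check is the honest argument.

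For the direction ``$\rel G$ not $3$-colorable $\Rightarrow$ $\Sigma_{\rel G}$ nontrivial'': suppose $\Sigma_{\rel G}$ were trivial, i.e.\ satisfied in $\Proj$. Since $\rel K_3$ contains $\rel K_3$ and, as is classical, $\Pol(\rel K_3)$ has a minion homomorphism to $\Proj$, hence (the definition of minion homomorphism preserving height~1 conditions, as noted in the excerpt after the definition of minion homomorphism) $\Pol(\rel K_3)$ satisfies $\Sigma_{\rel G}$ as well — wait, the implication goes the wrong way: a minion homomorphism $\Pol(\rel K_3)\to\Proj$ tells us that conditions satisfied in $\Pol(\rel K_3)$ are satisfied in $\Proj$, not conversely. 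So instead I use: $\Proj$ embeds (as a minion, via a minion homomorphism) into $\Pol(\rel B)$ for any $\rel B$? No. The right statement: for \emph{every} structure $\rel B$ there is a minion homomorphism $\Proj \to \Pol(\rel B)$? That is also false in general. The correct tool is Lemma~\ref{lem:sigma_g} directly: $\Pol(\rel K_3)$ — does it satisfy $\Sigma_{\rel G}$? If it did, then by Lemma~\ref{lem:sigma_g} (taking $\rel H = \rel K_3$, which contains $\rel K_3$) we would get a homomorphism $\rel G \to \rel K_3$, i.e.\ $\rel G$ would be $3$-colorable, contradiction. So $\Pol(\rel K_3)$ does \emph{not} satisfy $\Sigma_{\rel G}$. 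But $\Proj$ — we must connect $\Proj$ to $\Pol(\rel K_3)$: since $3$-coloring is $\NP$-hard, there is a minion homomorphism $\Pol(\rel K_3) \to \Proj$, but again that is the wrong direction. The resolution: triviality of $\Sigma_{\rel G}$ means satisfaction in $\Proj$, and there is always a minion homomorphism $\Proj \to \clo C$ for... no. Let me fix this properly: a minion homomorphism preserves satisfaction of height~1 conditions \emph{in the forward direction}; and there is a minion homomorphism $\Proj \to \Pol(\rel A)$ for every $\rel A$ given by $\proj^n_i \mapsto \proj^n_i$ — this IS a minion homomorphism since projections compose with precomposition correctly and the clone $\Pol(\rel A)$ contains all projections. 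Hence if $\Proj$ satisfies $\Sigma_{\rel G}$ then so does $\Pol(\rel A)$ for every $\rel A$, in particular $\Pol(\rel K_3)$. Combined with the previous paragraph's contradiction via Lemma~\ref{lem:sigma_g}, we conclude $\rel G \to \rel K_3$, i.e.\ $\rel G$ is $3$-colorable. This is the contrapositive of the desired direction.

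The main obstacle, therefore, is getting the logical direction of minion homomorphisms straight: the key observation to state carefully is that the identity map on projection symbols is a minion homomorphism from $\Proj$ into \emph{any} clone, so $\Sigma_{\rel G}$ trivial $\Leftrightarrow$ $\Sigma_{\rel G}$ holds in $\Proj$ $\Rightarrow$ $\Sigma_{\rel G}$ holds in $\Pol(\rel K_3)$ $\Rightarrow$ (Lemma~\ref{lem:sigma_g}) $\rel G \to \rel K_3$. Conversely $\rel G \to \rel K_3$ $\Rightarrow$ (Lemma~\ref{lem:3.1}) $\Sigma_{\rel K_3}$ implies $\Sigma_{\rel G}$, so it remains to check $\Sigma_{\rel K_3}$ is trivial — and here one does the small explicit computation assigning each $f_v$ the projection $\proj^3_v$ (identifying $V(\rel K_3) = \{1,2,3\}$) and each $g_{(u,v)}$ an appropriate $6$-ary projection, verifying the two defining identities of $\Sigma_{\rel K_3}$ hold in $\Proj$ for each of the three edges. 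That finite check, spelled out, is routine: for the edge $(1,2)$ take $g_{(1,2)} := \proj^6_1$ (first argument is $x=\proj_1$ on the left, $y=\proj_2$ on the right — hmm this gives $f_1 = \proj_1$ and $f_2 = \proj_2$, consistent); the other edges are symmetric under permuting which coordinate of the six is selected. I would present that table compactly rather than case by case.
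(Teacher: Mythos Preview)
Your proposal is correct and follows the same approach as the paper: use Lemma~\ref{lem:3.1} to reduce the ``$3$-colorable $\Rightarrow$ trivial'' direction to an explicit assignment of projections witnessing $\Sigma_{\rel K_3}$, and use Lemma~\ref{lem:sigma_g} with $\rel H=\rel K_3$ for the converse. Your detour through a minion homomorphism $\Proj\to\Pol(\rel K_3)$ is correct but unnecessary---the paper simply invokes the definition that a height~1 condition is trivial iff it holds in \emph{every} clone, so exhibiting one clone (namely $\Pol(\rel K_3)$) in which $\Sigma_{\rel G}$ fails already yields non-triviality.
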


\begin{proof}
  First, assume that ${\rel G}$ is $3$-colorable, i.e., it possesses a~homomorphism to $\rel K_3$. Then by the previous lemma, we have that $\Sigma_{\rel G}$ is implied by $\Sigma_{\rel K_3}$, and therefore it is enough to show that $\Sigma_{\rel K_3}$ is trivial. That is, we have to assign projections to the symbols of $\Sigma_{\rel K_3}$ in such a way that the identities are satisfied.
     Let $1,2,3$ be the vertices of $\rel K_3$, and define $f_i$ to be the $i$-th ternary projection. Moreover, for $i\neq j$ assign to $g_{(i,j)}$ the unique $6$-ary projection so that 
     \begin{align*}
  f_i(x,y,z) &\equals g_{(i,j)}( x,y, x,z, y,z ) \\
  f_j(x,y,z) &\equals g_{(i,j)}( y,x, z,x, z,y ).
\end{align*}
are satisfied.  By definition, this assignment satisfies $\Sigma_{\rel K_3}$. 

  If ${\rel G}$ is not $3$-colorable, then Lemma~\ref{lem:sigma_g} implies that $\Pol(\rel K_3)$ does not satisfy $\Sigma_{\rel G}$, and hence $\Sigma_{\rel G}$ is non trivial.
\end{proof}

\begin{remark}
  The lemma implies that the problem of deciding the triviality of height 1 conditions is \NP-hard, since it provides a reduction from the 3-coloring problem. The problem of deciding whether a~given height 1 condition is trivial is  known (in a~different, but equivalent  formulation) in computer science under the name \emph{Label Cover}~\cite{ABSS97}.
\end{remark}

We now show that for each non-trivial height 1 condition $\Sigma$ there is a~non-3-colorable graph ${\rel G}$ such that $\Sigma_{\rel G}$ is implied by $\Sigma$ (cf.\ \cite[Theorem 4.12]{BBKO19}).
We will use the folklore fact that $\Pol(\rel K_3)$ does not satisfy any non-trivial height 1 condition since it only contains functions of the form
\(
  f(x_1,\dots,x_n) = \alpha(x_i)
\)
where $1\leq i\leq n$ and $\alpha\colon \rel K_3 \to \rel K_3$ is a bijection. In particular, there exists a~minion homomorphism from $\Pol(\rel K_3)$ to $\Proj$.

\begin{lemma} \label{lem:1.2}
  Let $\clo A$ be a~clone that does not have a~minion homomorphism to $\Proj$. Then there exists a non $3$-colorable finite graph ${\rel G}$  such that $\clo A$ satisfies $\Sigma_{\rel G}$.
\end{lemma}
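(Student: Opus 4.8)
The plan is to package the whole problem into a single (infinite) graph $\rel G_{\clo A}$, built from $\clo A$ and $\rel K_3$, whose $3$-colourings correspond to minion homomorphisms $\clo A\to\Proj$ and whose finite subgraphs yield the desired conditions. I would let $\rel G_{\clo A}$ have vertex set $\clo A^{(3)}$, with two ternary functions $f_1,f_2$ adjacent exactly when there is some $g\in\clo A^{(6)}$ with
\[
  f_1(x_1,x_2,x_3)=g(x_1,x_2,x_1,x_3,x_2,x_3)\quad\text{and}\quad f_2(x_1,x_2,x_3)=g(x_2,x_1,x_3,x_1,x_3,x_2).
\]
Replacing $g$ by the minor $(u_1,\dots,u_6)\mapsto g(u_2,u_1,u_4,u_3,u_6,u_5)$ shows this relation is symmetric, so $\rel G_{\clo A}$ is a genuine undirected graph (possibly with loops), as $\clo A^{(3)}$ is a set. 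By the very shape of the identities defining $\Sigma_{\rel G}$, a graph homomorphism $h\colon\rel G\to\rel G_{\clo A}$ from a finite graph $\rel G$ is precisely a witnessing assignment for $\Sigma_{\rel G}$ in $\clo A$: interpret each vertex symbol $f_v$ by $h(v)$, and each $6$-ary symbol by a $g$ witnessing the adjacency in $\rel G_{\clo A}$ of the endpoints of the corresponding edge (using symmetry of $\rel G_{\clo A}$ to cover both orientations of an edge, and noting that a loop of $\rel G$ maps to a loop of $\rel G_{\clo A}$, which is exactly what $\Sigma_{\rel G}$ requires there). Hence, for every finite graph $\rel G$, if $\rel G$ maps homomorphically to $\rel G_{\clo A}$ then $\clo A$ satisfies $\Sigma_{\rel G}$.

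It thus suffices to exhibit a finite non-$3$-colourable graph that maps to $\rel G_{\clo A}$, and for this I would first show that $\rel G_{\clo A}$ itself is not $3$-colourable and then apply the De~Bruijn--Erd\H{o}s theorem to extract a finite non-$3$-colourable subgraph $\rel G\hookrightarrow\rel G_{\clo A}$ (if $\rel G_{\clo A}$ has a loop, such a $\rel G$ is immediate). Suppose for contradiction that $c\colon\rel G_{\clo A}\to\rel K_3$ is a homomorphism, with $\rel K_3$ on vertex set $\{1,2,3\}$. Define $\xi\colon\clo A\to\Pol(\rel K_3)$ by
\[
  \xi(f)(a_1,\dots,a_n):=c\bigl((x_1,x_2,x_3)\mapsto f(x_{a_1},\dots,x_{a_n})\bigr)
\]
for $f\in\clo A^{(n)}$ and $(a_1,\dots,a_n)\in\{1,2,3\}^n$, where the indicated map is a ternary member of $\clo A$, hence a vertex of $\rel G_{\clo A}$. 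That $\xi$ preserves arities and commutes with minors is immediate from the definition. The point requiring a brief computation is that $\xi(f)$ is a polymorphism of $\rel K_3$: given edges $(b^i,{b^i}')$ of $\rel K_3$ for $i\le n$, the two ternary functions $(x_1,x_2,x_3)\mapsto f(x_{b^1},\dots,x_{b^n})$ and $(x_1,x_2,x_3)\mapsto f(x_{{b^1}'},\dots,x_{{b^n}'})$ are adjacent in $\rel G_{\clo A}$ — a witness is obtained by writing each $(b^i,{b^i}')$ as the $\tau(i)$-th edge in the fixed enumeration $(1,2),(2,1),(1,3),(3,1),(2,3),(3,2)$ of $E(\rel K_3)$, whose first- and second-coordinate sequences are $(1,2,1,3,2,3)$ and $(2,1,3,1,3,2)$, and then taking the minor $g(u_1,\dots,u_6):=f(u_{\tau(1)},\dots,u_{\tau(n)})$ of $f$ — so $\xi(f)$ sends these two argument tuples to $c$ of adjacent vertices of $\rel G_{\clo A}$, which are therefore distinct. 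Thus $\xi$ is a minion homomorphism, and composing it with the minion homomorphism $\Pol(\rel K_3)\to\Proj$ recalled just before the lemma contradicts the hypothesis on $\clo A$. Applying De~Bruijn--Erd\H{o}s as above then yields the required $\rel G$.

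I expect the main difficulty to be conceptual rather than computational: recognising that the right object is the graph $\rel G_{\clo A}$ (in essence the free structure of $\clo A$ generated by $\rel K_3$), and that the relevant chain of equivalences is ``$\rel G_{\clo A}$ is $3$-colourable'' $\Leftrightarrow$ ``there is a minion homomorphism $\clo A\to\Pol(\rel K_3)$'' $\Leftrightarrow$ ``there is a minion homomorphism $\clo A\to\Proj$'', while any finite non-$3$-colourable subgraph $\rel G$ of $\rel G_{\clo A}$ forces $\clo A$ to satisfy $\Sigma_{\rel G}$. Once this construction and dictionary are set up, the remaining steps — the symmetry of the adjacency relation, the verification that each $\xi(f)$ preserves edges, and the use of De~Bruijn--Erd\H{o}s to descend to a finite subgraph — are routine bookkeeping with minors, so I do not anticipate a genuine obstacle there.
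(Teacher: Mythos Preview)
Your proposal is correct and follows essentially the same route as the paper: you build the same graph on $\clo A^{(3)}$, identify its $3$-colourings with minion homomorphisms $\clo A\to\Pol(\rel K_3)$, and then pass to a finite non-$3$-colourable subgraph by compactness (De~Bruijn--Erd\H{o}s). The only difference is that the paper outsources the correspondence between colourings and minion homomorphisms to \cite[Lemma~4.4]{BBKO19}, whereas you spell out the construction of $\xi$ and the verification that each $\xi(f)$ preserves edges of $\rel K_3$ directly; your self-contained treatment is a bit longer but avoids the external reference.
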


\begin{proof}
By~\cite[Lemma 4.4]{BBKO19}, minion homomorphisms from $\clo A$ to $\Pol(\rel K_3)$ correspond precisely to 3-colorings of a certain graph ${\rel F}=(V,E)$, which we shall now describe (cf.\ \cite[Definition 4.1]{BBKO19}). This (possibly infinite) graph will serve as a~source of finite graphs ${\rel G}$ such that $\clo A$ satisfies $\Sigma_{\rel G}$.

We take $V:= \clo A^{(3)}$, and define the edges of ${\rel F}$ in the following way: $(f_1,f_2)\in E$ if and only if there exists $g\in \clo A^{(6)}$ such that
  \begin{align*}
    f_1(x,y,z) &\equals g( x,y,x,z,y,z ) \\
    f_2(x,y,z) &\equals g( y,x,z,x,z,y )
  \end{align*}
  holds in $\clo A$. 
  Clearly, $\clo A$ satisfies $\Sigma_{\rel G}$ for each finite subgraph ${\rel G}$ of ${\rel F}$ since the functions that correspond to the vertices of ${\rel G}$ together with the witnesses for the edges of ${\rel G}$ provide a~solution to $\Sigma_{\rel G}$ (see also~\cite[Lemma 4.3]{BBKO19}).
  
  Now \cite[Lemma 4.4]{BBKO19} (applied to $\mathbf A=\mathbf B:=\rel K_3$) states that the minion homomorphisms to from $\clo A$ to $\Pol(\rel K_3)$ correspond precisely to the 3-colorings of $\rel F$. Since $\clo A$ does not have any minion homomorphism to $\Proj$, it has none to $\Pol(\rel K_3)$ either, and hence $\rel F$ is not $3$-colorable. By a~standard compactness argument, there exists a finite subgraph $\gG$ of $\rel F$ which is not 3-colorable. Since $\clo A$ satisfies $\Sigma_{\rel G}$, the proof is complete.
\end{proof}

\begin{corollary} \label{cor:1.2}
  For each non-trivial height 1 condition $\Sigma$ there exists a~graph ${\rel G}$ that is not $3$-colorable and such that $\Sigma_{\rel G}$ is implied by $\Sigma$.
\end{corollary}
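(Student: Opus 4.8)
The plan is to reduce the corollary to Lemma~\ref{lem:1.2}. The only genuine difficulty is a quantifier mismatch: Lemma~\ref{lem:1.2} attaches to \emph{each} clone $\clo A$ without a minion homomorphism to $\Proj$ a graph ${\rel G}$ that \emph{depends on} $\clo A$, whereas the corollary asks for a \emph{single} graph ${\rel G}$ such that \emph{every} clone satisfying $\Sigma$ satisfies $\Sigma_{\rel G}$. I would handle this in one of two ways: either apply Lemma~\ref{lem:1.2} to the clone freely generated by the function symbols of $\Sigma$ subject to the identities of $\Sigma$ (realized concretely as the clone of term operations of the free algebra in the variety axiomatized by $\Sigma$) and conclude via its universal mapping property, or---in order to avoid any discussion of free algebras and of their possible degeneracy---run the argument by contradiction that I describe next.

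First I would record two observations, both immediate from Section~\ref{sect:prelims}. One: a clone satisfying a non-trivial height~1 condition $\Sigma$ has no minion homomorphism to $\Proj$, since such a homomorphism preserves satisfaction of height~1 conditions and would therefore force $\Proj$ to satisfy $\Sigma$, i.e., force $\Sigma$ to be trivial. Two: satisfaction of a height~1 condition passes from each factor of a product of clones to the product itself, the witnesses being assembled coordinatewise; and conversely, if a product of clones satisfies a height~1 condition then so does each factor, because each coordinate projection is a minion homomorphism.

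Then I would argue by contradiction. Assuming that $\Sigma$ implies $\Sigma_{\rel G}$ for no graph ${\rel G}$ that is not $3$-colorable, for each such ${\rel G}$ I would fix a clone $\clo C_{\rel G}$ that satisfies $\Sigma$ but not $\Sigma_{\rel G}$, and then form the product $\clo C$ of all the $\clo C_{\rel G}$, indexed by a set of representatives of the isomorphism classes of finite graphs that are not $3$-colorable, with operations acting coordinatewise. By the second observation, $\clo C$ satisfies $\Sigma$ but fails $\Sigma_{\rel G}$ for every finite graph ${\rel G}$ that is not $3$-colorable. By the first observation, $\clo C$ has no minion homomorphism to $\Proj$, so Lemma~\ref{lem:1.2} yields a finite graph ${\rel G}$ that is not $3$-colorable and for which $\clo C$ \emph{does} satisfy $\Sigma_{\rel G}$---a contradiction. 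Hence some finite graph ${\rel G}$ that is not $3$-colorable does satisfy the required implication. I expect the main, and essentially the only, subtle point to be exactly this passage from the clone-dependent graph of Lemma~\ref{lem:1.2} to a single graph valid for all models of $\Sigma$; the remaining steps are routine bookkeeping with facts already in hand.
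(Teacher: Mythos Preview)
Your proposal is correct. The first route you sketch---applying Lemma~\ref{lem:1.2} to the clone of term operations of the free countably generated algebra in the variety axiomatized by $\Sigma$, and then invoking freeness to conclude that the resulting $\Sigma_{\rel G}$ is implied by $\Sigma$---is precisely the paper's proof.

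Your second route, via the product clone $\clo C=\prod_{\rel G}\clo C_{\rel G}$, is a genuine alternative and also correct: height~1 conditions transfer to products coordinatewise, and each coordinate projection is a minion homomorphism, so $\clo C$ satisfies $\Sigma$ yet fails every $\Sigma_{\rel G}$, contradicting Lemma~\ref{lem:1.2}. This argument trades the single universal object (the free algebra) for a large product; it is slightly longer but has the advantage you note of avoiding any appeal to free algebras and their universal mapping property. Conversely, the paper's approach is shorter and makes the logical structure transparent: the free algebra is exactly the object whose satisfied height~1 conditions coincide with the consequences of $\Sigma$, so one application of Lemma~\ref{lem:1.2} suffices with no contradiction needed.
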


\begin{proof}
  Let $\clo A$ be the clone of term operations of the free countably generated algebra in the variety defined by $\Sigma$. Clearly, $\Sigma$ witnesses that $\clo A$ has no minion homomorphism to ${\Proj}$. Therefore, Lemma~\ref{lem:1.2} provides a~non $3$-colorable graph ${\rel G}$ such that $\clo A$ satisfies $\Sigma_{\rel G}$.
  Since $\clo A$ is free, we obtain that $\Sigma_{\rel G}$ is implied by $\Sigma$.
\end{proof}

Corollary~\ref{cor:1.2} states that the family $\{\Sigma_{\rel G} \mid \rel G \text{ is a non-3-colorable finite graph}\}$ is a weakest family of non-trivial height~1 conditions in the sense that any nontrivial height~1 condition implies one of its members. In the rest of this section, we will turn this family into a decreasing chain with the same property, obtaining a Mal'cev condition which characterizes non-triviality.
We remark that such a chain of height 1 conditions that are not of our special form, i.e., a chain $(\Sigma_n)_{n\geq 1}$ of non-trivial height~1 conditions such that 
\begin{itemize}
  \item $\Sigma_n$ implies $\Sigma_{n+1}$ for all $n\geq 1$, and
  \item for every non-trivial height~1 condition $\Sigma$ there exists $n\geq 1$ such that $\Sigma$ implies $\Sigma_n$,
\end{itemize}
can be constructed easily. The key is to observe that, for any two non-trivial height~1 conditions $\Sigma$ and $\Pi$, there is a non-trivial height 1 condition $\Delta$ which is implied by both of them. The following {natural construction has been pointed out to us by} Pierre Gilibert:
the symbols of $\Delta$ are all pairs $(f,g)$, where $f$ is a symbol of $\Sigma$ and $g$ is a symbol of $\Pi$. The arity of $(f,g)$ is the sum of the arities of $f$ and $g$.  The identities are all identities of the form
\[
  (f,g)(x_1,\dots,x_n,z_1,\dots,z_m) \approx  (f',g)(y_1,\dots,y_l,z_1,\dots,z_m)
\]
where $f(x_1,\dots,x_n)\approx f'(y_1,\dots,y_l)$ is an identity of $\Sigma$, or, similarly, of the form
\[
  (f,g)(x_1,\dots,x_n,y_1,\dots,y_l) \approx  (f,g')(x_1,\dots,x_n,z_1,\dots,z_m)
\]
where $g(y_1,\dots,y_l)\approx g'(z_1,\dots,z_m)$ is an identity of $\Pi$. It is easy to check that $\Delta$ is implied by either of $\Sigma$ and $\Pi$, and also that $\Delta$ is non-trivial given that $\Sigma$ and $\Pi$ are.
{The existence of the chain $(\Sigma_n)_{n\geq 1}$ clearly follows: starting with an enumeration $\Pi_1,\Pi_2,\dots$ of all non-trivial height 1 conditions, one defines $\Sigma_1 = \Pi_1$, and $\Sigma_{n+1}$ to be a non-trivial height 1 condition that is implied by both $\Sigma_n$ and $\Pi_{n+1}$ for all $n\geq 1$.}

Now, a chain which is composed of conditions of the form $\Sigma_{\rel G}$ can be constructed by interweaving the above inductive construction with repeated use of Corollary~\ref{cor:1.2}. This procedure gives us, however, no control over the graphs which appear in the chain; moreover our proof of Corollary~\ref{cor:1.2} is non-constructive as it involves a compactness argument, although this might be remedied with a bit of care. We give an alternative direct construction which uses the fact that the product of two non 3-colorable graphs is not 3-colorable \cite{ElZaharSauer}\footnote{This is a special case of Hadetniemi's conjecture; in full generality, the Hedetniemi's conjecture has recently been disproved~\cite{Hedetniemi-false}.}.
One more alternative construction, that does not rely on this deep result, can be found in Appendix~\ref{sect:a1}. Both mentioned constructions allow the chain to be naturally enumerated by a Turing machine.

\begin{lemma} \label{lem:chain}
  There exist {finite} graphs $\rel H_1$, $\rel H_2$, \dots\ such that
  \begin{enumerate}
    \item $\rel H_n$ is not 3-colourable for any $n\geq 1$,
    \item $\rel H_{n+1}$ maps homomorphically to $\rel H_n$ for all $n\geq 1$, and
    \item for every non-trivial height~1 condition $\Sigma$ there exists $n\geq 1$ such that $\Sigma$ implies $\Sigma_{\rel H_n}$.
  \end{enumerate}
\end{lemma}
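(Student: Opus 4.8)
The plan is to turn the family $\{\Sigma_{\rel G} \mid \rel G \text{ is a non-$3$-colorable finite graph}\}$ supplied by Corollary~\ref{cor:1.2} into a decreasing chain by taking iterated categorical products over an enumeration of all non-$3$-colorable finite graphs. Recall that the categorical (tensor) product $\rel G\times\rel H$ has vertex set $V(\rel G)\times V(\rel H)$, with $(u,v)$ adjacent to $(u',v')$ precisely when $u$ is adjacent to $u'$ in $\rel G$ and $v$ is adjacent to $v'$ in $\rel H$; it is associative up to isomorphism, and each coordinate projection $\rel G\times\rel H\to\rel G$ is a graph homomorphism. Fix an enumeration $\rel G_1,\rel G_2,\dots$ of all finite non-$3$-colorable graphs up to isomorphism, which can be produced effectively since finite graphs can be listed and $3$-colorability is decidable. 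Define
\[
  \rel H_n := \rel G_1\times\rel G_2\times\cdots\times\rel G_n .
\]

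First I would verify properties~(1) and~(2). For~(1): by the theorem of El-Zahar and Sauer~\cite{ElZaharSauer} the categorical product of two non-$3$-colorable graphs is again non-$3$-colorable, and an induction on $n$ shows that $\rel H_n$ is not $3$-colorable for every $n\ge 1$. For~(2): since $\rel H_{n+1}=\rel H_n\times\rel G_{n+1}$, the projection onto the first $n$ coordinates is a homomorphism $\rel H_{n+1}\to\rel H_n$. (By Lemma~\ref{lem:3.1} this moreover yields that $\Sigma_{\rel H_n}$ implies $\Sigma_{\rel H_{n+1}}$, so $(\Sigma_{\rel H_n})_{n\ge 1}$ is a genuinely decreasing chain of non-trivial conditions, which is the point of the construction.)

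For property~(3), let $\Sigma$ be an arbitrary non-trivial height~1 condition. By Corollary~\ref{cor:1.2} there is a non-$3$-colorable finite graph $\rel G$ such that $\Sigma$ implies $\Sigma_{\rel G}$. This $\rel G$ is isomorphic to $\rel G_m$ for some $m$, and applying Lemma~\ref{lem:3.1} to the two homomorphisms witnessing an isomorphism $\rel G\cong\rel G_m$ shows that $\Sigma_{\rel G}$ and $\Sigma_{\rel G_m}$ are equivalent; hence $\Sigma$ implies $\Sigma_{\rel G_m}$. Since the coordinate projection $\rel H_m\to\rel G_m$ is a homomorphism, Lemma~\ref{lem:3.1} gives that $\Sigma_{\rel G_m}$ implies $\Sigma_{\rel H_m}$. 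Composing the two implications, $\Sigma$ implies $\Sigma_{\rel H_m}$, which establishes~(3) with $n=m$.

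Beyond packaging ingredients that are already in hand, the only non-routine input is the El-Zahar--Sauer theorem used to prove~(1); this is precisely the deep fact flagged before the lemma statement, and the alternative construction in Appendix~\ref{sect:a1} is designed to avoid it. So I do not anticipate a genuine obstacle here. Finally, because the enumeration of non-$3$-colorable graphs and the iterated product are both effective, the sequence $(\rel H_n)_{n\ge 1}$ is computable by a Turing machine, as asserted in the surrounding discussion.
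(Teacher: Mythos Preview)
Your proof is correct and follows essentially the same approach as the paper: enumerate the non-$3$-colorable finite graphs, set $\rel H_n:=\rel G_1\times\cdots\times\rel G_n$, use El-Zahar--Sauer for~(1), the projection for~(2), and Corollary~\ref{cor:1.2} together with Lemma~\ref{lem:3.1} for~(3). The only cosmetic difference is that you enumerate up to isomorphism and spell out the resulting equivalence $\Sigma_{\rel G}\leftrightarrow\Sigma_{\rel G_m}$, whereas the paper simply enumerates all non-$3$-colorable graphs so that the given $\rel G$ appears literally in the list.
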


\begin{proof}
  Fix an enumeration $\rel G_1,\rel G_2,\dots$ of all non 3-colorable graphs. For {$n\geq 1$, define} $\rel H_n := \rel G_1 \times \dots \times \rel G_n$.
  By~\cite{ElZaharSauer} we get that the graphs $\rel H_n$ are not 3-colorable, which gives item (1). Item (2) follows {since  $\rel H_{n+1}$ projects into $\rel H_{n}$ for all $n\geq 1$}. {To prove item (3), assume that $\Sigma$ is a~non-trivial height~1 condition. Then there exists $n \geq 1$  such that $\Sigma_{\rel G_n}$ is weaker than $\Sigma$, by Corollary~\ref{cor:1.2}. Since $\rel H_n$ maps homomorphically to $\rel G_n$ via a projection, we get that $\Sigma_{\rel H_n}$ is implied by $\Sigma$ as requested.}
\end{proof}

{In the next section, we are going to prove Theorem~\ref{thm:main-1} which states that there exists no weakest non-trivial height~1 condition. A consequence of this theorem is that we can make our chain strictly decreasing.}

\begin{theorem} \label{thm:chain}
  There exist {finite} graphs $\rel H_1$, $\rel H_2$, \dots\ such that
  \begin{enumerate}
    \item $\Sigma_{\rel H_n}$ is a~non-trivial height 1 condition for all $n\geq 1$,
    \item $\Sigma_{\rel H_n}$ is strictly stronger than $\Sigma_{\rel H_{n+1}}$ for all $n\geq 1$, and
    \item for every non-trivial height~1 condition $\Sigma$ there exists $n\geq 1$ such that $\Sigma$ implies $\Sigma_{\rel H_n}$.
  \end{enumerate}
\end{theorem}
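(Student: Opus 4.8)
The plan is to obtain the chain of Theorem~\ref{thm:chain} by refining the chain of Lemma~\ref{lem:chain}: into the recursive step I would additionally fold in a graph witnessing, via Theorem~\ref{thm:main-1}, that the previous member of the chain is \emph{not} a weakest non-trivial height~1 condition. Fix an enumeration $\rel G_1,\rel G_2,\dots$ of all finite non-3-colourable graphs up to isomorphism, and define the graphs $\rel H_n$ recursively. Put $\rel H_1 := \rel G_1$. Given $\rel H_n$, apply Theorem~\ref{thm:main-1} to the condition $\Sigma_{\rel H_n}$ (which is non-trivial by Lemma~\ref{lem:1.1}, since $\rel H_n$ will turn out to be non-3-colourable): this produces a structure $\rel B$ with $\Pol(\rel B)\not\models\Sigma_{\rel H_n}$ such that $\Pol(\rel B)$ satisfies some non-trivial height~1 condition, and hence has no minion homomorphism to $\Proj$. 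By Lemma~\ref{lem:1.2} there is then a finite non-3-colourable graph $\rel G$ with $\Pol(\rel B)\models\Sigma_{\rel G}$. Now set $\rel H_{n+1}:=\rel H_n\times\rel G\times\rel G_{n+1}$.

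It then remains to verify the three properties. For~(1): unravelling the recursion, each $\rel H_n$ is a finite tensor product of non-3-colourable graphs (among them $\rel G_1$), so $\rel H_n$ is non-3-colourable by the theorem of El-Zahar and Sauer~\cite{ElZaharSauer}, and therefore $\Sigma_{\rel H_n}$ is non-trivial by Lemma~\ref{lem:1.1}. For~(2): the projection of $\rel H_{n+1}$ onto its first factor $\rel H_n$ is a graph homomorphism, so $\Sigma_{\rel H_n}$ implies $\Sigma_{\rel H_{n+1}}$ by Lemma~\ref{lem:3.1}; for strictness, the projection of $\rel H_{n+1}$ onto its second factor $\rel G$ is a homomorphism, so $\Sigma_{\rel G}$ implies $\Sigma_{\rel H_{n+1}}$; hence, were $\Sigma_{\rel H_{n+1}}$ to imply $\Sigma_{\rel H_n}$, we would get that $\Sigma_{\rel G}$ implies $\Sigma_{\rel H_n}$, and since the clone $\Pol(\rel B)$ satisfies $\Sigma_{\rel G}$ it would then also satisfy $\Sigma_{\rel H_n}$, contradicting the choice of $\rel B$. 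For~(3): given any non-trivial height~1 condition $\Sigma$, Corollary~\ref{cor:1.2} provides a finite non-3-colourable graph $\rel G'$ with $\Sigma$ implying $\Sigma_{\rel G'}$; writing $\rel G'\cong\rel G_m$ in our enumeration, the projection of $\rel H_m$ onto its $\rel G_m$-factor is a homomorphism, so $\Sigma_{\rel G_m}$ implies $\Sigma_{\rel H_m}$ by Lemma~\ref{lem:3.1}, whence $\Sigma$ implies $\Sigma_{\rel H_m}$.

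I do not expect a substantial obstacle: the argument is essentially bookkeeping, combining Theorem~\ref{thm:main-1}, Lemmas~\ref{lem:1.1}, \ref{lem:1.2} and~\ref{lem:3.1}, Corollary~\ref{cor:1.2}, and~\cite{ElZaharSauer}. The one point needing care is that the construction is genuinely recursive — $\rel H_{n+1}$ is built by applying Theorem~\ref{thm:main-1} to $\Sigma_{\rel H_n}$, which presupposes that $\rel H_n$ is non-3-colourable — so I would either interleave the proof of item~(1) with the construction or observe at the outset that every $\rel H_n$ has $\rel G_1$ as a tensor factor and is therefore non-3-colourable. If one also wants the chain to be enumerable by a Turing machine (as in Lemma~\ref{lem:chain}), one additionally needs effective versions of Theorem~\ref{thm:main-1} and of Corollary~\ref{cor:1.2}; this, however, is not required for the statement as given.
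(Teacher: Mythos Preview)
Your proof is correct and uses the same essential ingredients as the paper --- Theorem~\ref{thm:main-1}, Lemmas~\ref{lem:1.1}, \ref{lem:1.2}, \ref{lem:3.1}, Corollary~\ref{cor:1.2}, and the El-Zahar--Sauer theorem --- but the bookkeeping is organised differently. The paper simply takes the weakly decreasing chain $(\Sigma_{\rel H_n})_{n\geq 1}$ already provided by Lemma~\ref{lem:chain}, observes that by Theorem~\ref{thm:main-1} this sequence cannot eventually stabilise (else its eventual value would be a weakest non-trivial height~1 condition), and therefore passes to a strictly decreasing subsequence. You instead fold the strict decrease directly into the recursion by inserting, at each step, an additional factor $\rel G$ witnessing via $\Pol(\rel B)$ that $\Sigma_{\rel H_{n+1}}$ is strictly weaker than $\Sigma_{\rel H_n}$. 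Your route is thus more explicit about \emph{where} the strict drop occurs, at the cost of a slightly heavier inductive definition; the paper's subsequence argument is shorter but non-constructive in the same way.

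One small remark on your closing paragraph: the alternative ``observe at the outset that every $\rel H_n$ has $\rel G_1$ as a tensor factor and is therefore non-3-colourable'' is not literally correct --- a single non-3-colourable factor does not force the product to be non-3-colourable. What you need (and what your main argument for~(1) already says) is that \emph{all} factors are non-3-colourable, so that El-Zahar--Sauer applies. Interleaving the verification of~(1) with the construction, as you also suggest, handles this cleanly.
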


\begin{proof}[Proof given Theorem~\ref{thm:main-1}]
  Assume that $\rel H_1, \rel H_2,\ldots$ are such that they satisfy the claim of Lemma~\ref{lem:chain}. In particular, we have that ${\rel H_n}$ is not 3-colorable for any $n\geq 1$ and hence $\Sigma_{\rel H_n}$ is non-trivial (Lemma~\ref{lem:1.1}). We also have that $\rel H_{n+1}$ maps homomorphically to $\rel H_n$ which, together with Lemma~\ref{lem:3.1}, gives that $\Sigma_{\rel H_n}$ implies $\Sigma_{\rel H_{n+1}}$ for all $n\geq 1$. It thus follows from Theorem~\ref{thm:main-1} that the sequence $(\Sigma_{\rel H_n})_{n\geq 1}$  must strictly decrease infinitely often. Taking the graphs corresponding to a~strictly decreasing subsequence proves the theorem.
\end{proof}

\section{There is no weakest height~1 condition}\label{sect:noweakest}

We now show that there is no weakest height~1 condition, even when restricted to \changed{structures which are} reducts of finitely bounded homogeneous structures in a~finite relational language.  More precisely, for each non-trivial height~1 condition $\Sigma$ there is a~structure $\aA$ such that
\begin{itemize}
\item $\aA$ has a~finite relational language, and is a~reduct of a~finitely bounded homogeneous structure;
\item $\CSP(\aA)$ is in \Ptime;
\item $\Pol(\aA)$ satisfies some non-trivial height~1 condition (or equivalently, does not possess a~minion homomorphism to~$\Proj$);
\item $\Pol(\aA)$ does not satisfy $\Sigma$.
\end{itemize}
It follows that in the dichotomy conjecture for reducts of finitely bounded homogeneous structures, which currently characterizes tractability of a~CSP by the existence of a~pseudo-Siggers polymorphism, the latter cannot be replaced by any height 1 condition.
This is in contrast with the CSP dichotomy for finite structures (Theorem~\ref{thm:bulatov-zhuk}), which does draw the borderline between tractability and hardness by such a~condition, for example, {the existence of} a~Siggers polymorphism.

Our structures will be obtained as universal structures for graphs with forbidden homomorphic images, first constructed by Cherlin, Shelah, and Shi~\cite{CSS99} and later refined by Hubi\v{c}ka and Ne\v{s}et\v{r}il~\cite{HN16}.

\begin{definition}
For a family of $\sigma$-structures $\mathcal G$, we set $\Forb(\mathcal G)$ to be the class of all $\sigma$-structures which do not contain a~homomorphic image of any member of $\mathcal G$.
A countable structure is \emph{universal} for  $\Forb(\mathcal G)$ if it embeds precisely those countable structures which are elements of $\Forb(\mathcal G)$.
\end{definition}

In the following, a~\emph{cut} of a~relational structure $\rel G$ is defined to be a~set of elements of $\rel G$ whose removal disconnects the Gaifman graph of $\rel G$ (the graph with same domain as $\rel G$ and where there is an edge $\{x,y\}$ iff $x$ and $y$ appear together in some tuple of some relation of $\rel G$). The structure $\rel G$ is \emph{connected} if its Gaifman graph is.
{An $\omega$-categorical} structure $\rel A$ has \emph{no algebraicity} if for all $k\geq 0$ and $a_1,\dots,a_k\in A$, the finite orbits of the stabilizer $\Aut(\rel A,a_1,\dots,a_k)$
are {precisely the sets} $\{a_1\},\dots,\{a_k\}$.

\begin{theorem}[{Corollary of \cite[Theorem 3.3]{HN16}}]\label{thm:hubicka-nesetril}
  Let $\mathcal G$ be a finite family of finite connected structures. There exists a~countable $\omega$-categorical structure $\CSS({\mathcal G})$ with the following properties:
  \begin{itemize}
    \item $\CSS({\mathcal G})$ is universal for $\Forb(\mathcal G)$,
    \item $\CSS({\mathcal G})$ has no algebraicity,
    \item there exists a~homogeneous expansion of\/ $\CSS({\mathcal G})$ by finitely many pp-definable relations whose arities are the size of the minimal cuts of the structures in $\mathcal G$.
    Moreover, this expansion is finitely bounded.
  \end{itemize}
\end{theorem}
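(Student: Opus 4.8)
The plan is to obtain $\CSS(\mathcal G)$ as a first-order reduct of the homogeneous structure supplied by \cite[Theorem 3.3]{HN16}, and then to read off the remaining properties by soft arguments.

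First I would apply \cite[Theorem 3.3]{HN16} to $\mathcal G$, whose members share a signature $\sigma$. That theorem produces a finite relational signature $\sigma^+ \supseteq \sigma$ in which the symbols of $\sigma^+ \setminus \sigma$ are indexed by the ``minimal cut types'' occurring in members of $\mathcal G$, the arity of each such symbol being the number of elements of the corresponding cut, together with a homogeneous $\sigma^+$-structure $\rel D$ such that: the $\sigma$-reduct of $\rel D$ embeds precisely the countable members of $\Forb(\mathcal G)$; the age of $\rel D$ is the class of finite $\sigma^+$-structures omitting a fixed finite list of finite $\sigma^+$-structures, so $\rel D$ is finitely bounded; and each $R \in \sigma^+ \setminus \sigma$ is interpreted by a prescribed ``closure'', namely $R^{\rel D}$ is the set of tuples occurring as images of the associated cut $C \subseteq G$, $G \in \mathcal G$, under a partial homomorphism of a fixed connected ``side'' of $G$ that is realised in $\rel D$. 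I would then let $\CSS(\mathcal G)$ be the $\sigma$-reduct of $\rel D$; universality for $\Forb(\mathcal G)$ is then immediate, and $\rel D$ is the required finitely bounded homogeneous expansion.

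It then remains to verify the three remaining items. For $\omega$-categoricity: $\rel D$ is homogeneous in a finite relational language, hence has only finitely many $n$-types for each $n$ and so is $\omega$-categorical by the Ryll--Nardzewski theorem, and a first-order reduct of an $\omega$-categorical structure is again $\omega$-categorical, so $\CSS(\mathcal G)$ is $\omega$-categorical. For pp-definability: the closure description of each $R \in \sigma^+ \setminus \sigma$ is verbatim an existential quantification, over the non-cut elements of the chosen side of $G$, of a conjunction of atomic $\sigma$-formulas, i.e.\ a primitive positive $\sigma$-formula; since $\rel D$ and $\CSS(\mathcal G)$ have the same $\sigma$-reduct, this formula defines $R$ in $\CSS(\mathcal G)$, and its arity equals the size of the corresponding minimal cut. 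For no algebraicity: the structures forbidden in $\rel D$ are all connected (they arise from the connected members of $\mathcal G$ and from connected closure configurations), so the age of $\rel D$ is closed under disjoint unions; hence, given $a_1, \dots, a_k$ and $b \notin \{a_1, \dots, a_k\}$, the substructure of $\rel D$ on $\{a_1, \dots, a_k, b\}$ together with arbitrarily many fresh disjoint isomorphic copies of its one-element substructure on $\{b\}$ still lies in the age of $\rel D$, and homogeneity lets one realise all these copies while fixing $a_1, \dots, a_k$ and mapping $b$ onto distinct elements; thus the orbit of $b$ under $\Aut(\CSS(\mathcal G), a_1, \dots, a_k)$ is infinite, and since stabilisers fix $a_1, \dots, a_k$ this gives no algebraicity.

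The main obstacle is not any single deep step but faithfully matching the abstract cut/side/closure formalism of \cite[Theorem 3.3]{HN16} with the statement as phrased here: one must extract from that theorem that the new relations have exactly the arities of the minimal cuts, that their prescribed interpretations are given by primitive positive $\sigma$-formulas, and that the age of the expansion is finitely axiomatised by connected forbidden structures. All of these follow from the construction in \cite{HN16}, but confirming them requires unwinding its definitions; granting them, the reduct argument, the appeal to Ryll--Nardzewski, and the no-algebraicity argument are routine.
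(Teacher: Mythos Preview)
The paper does not give a proof of this theorem; it is stated without proof and simply attributed as a corollary of \cite[Theorem 3.3]{HN16}. So there is no argument in the paper to compare your proposal against. Your outline is essentially what such a derivation would look like: take the homogeneous finitely bounded expansion from \cite{HN16}, let $\CSS(\mathcal G)$ be its $\sigma$-reduct, and read off $\omega$-categoricity, pp-definability of the added relations, and no algebraicity.

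One genuine gap: your no-algebraicity argument does not work as written. You propose to add disjoint copies of the one-element substructure on $\{b\}$ and then map $b$ to one of these copies while fixing $a_1,\dots,a_k$. But if $b$ is related to some $a_i$ in the original structure, the new disjoint element $b'$ has no such relation, so the map fixing the $a_i$ and sending $b\mapsto b'$ is not a partial isomorphism and homogeneity does not apply. The correct argument uses \emph{free amalgamation}: since the forbidden structures (in the expanded signature) are connected, the age of $\rel D$ has the free amalgamation property; amalgamating two copies of the substructure on $\{a_1,\dots,a_k,b\}$ over $\{a_1,\dots,a_k\}$ produces an element $b'\neq b$ with the same type over $a_1,\dots,a_k$, and then homogeneity gives an automorphism fixing the $a_i$ and moving $b$. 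Iterating yields an infinite orbit. With this fix your derivation is sound.
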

{Note that a~consequence of the third item in Theorem~\ref{thm:hubicka-nesetril} is that $\CSS(\mathcal G)$ resides within the scope of the infinite-domain CSP dichotomy conjecture. 
We simply write $\CSS(\gG)$ when $\mathcal G$ consists of a single structure $\gG$.}

\begin{definition}\label{def:qnu}
  We say that a~function $f\colon A^n \to A$ is a~\emph{quasi near unanimity operation} if it satisfies the identities 
  \[
    f(y,x,\dots,x) \equals f(x,y,x,\dots,x) \equals \dots
      \equals f(x,\dots,x,y) \equals f(x,\dots,x) \; ,
    \label{eq:qnu}
  \]
  i.e., if it takes the same value on all tuples that consist of a~single value $x\in A$ with at most one exception.
\end{definition}
\changed{Quasi near unanimity operations which in addition are \emph{idempotent}, i.e., which additionally satisfy $f(x,\dots,x)\equals x$, are called \emph{near unanimity operations} in the literature, and have been widely studied (see, for example,~\cite{BP}).}
Also note that the identities in Definition~\ref{def:qnu} constitute a~non-trivial height~1 condition.

\begin{lemma}\label{lem:fg}
  Let $\gG$ be a~finite connected graph which is not 3-colorable, and let $\hH$ be universal for $\Forb(\gG)$. Then:
\begin{itemize}
\item $\Pol(\hH)$ does not satisfy $\Sigma_{\gG}$;
\item $\Pol(\hH)$ has quasi near unanimity polymorphisms of all arities larger than the number of edges of $\gG$.
\end{itemize}  
\end{lemma}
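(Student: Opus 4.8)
The plan is to prove the two items of the lemma separately. For both it is useful to first record that $\rel K_3\in\Forb(\gG)$: indeed $\gG$ is not $3$-colorable, i.e.\ $\gG$ does not map homomorphically to $\rel K_3$, which is the same as saying that $\rel K_3$ contains no homomorphic image of $\gG$. Hence, by universality of $\hH$ for $\Forb(\gG)$, a copy of $\rel K_3$ embeds into $\hH$; moreover $\hH$ embeds into itself, so $\hH\in\Forb(\gG)$, i.e.\ $\gG$ does not map homomorphically to $\hH$. The first item is now immediate: if $\Pol(\hH)$ satisfied $\Sigma_{\gG}$, then since $\hH$ contains a copy of $\rel K_3$, Lemma~\ref{lem:sigma_g} would yield a homomorphism $\gG\to\hH$, a contradiction.

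For the second item, fix $n$ larger than the number $m$ of edges of $\gG$ (since $\gG$ is not $3$-colorable we may assume $n\geq 3$). I would reduce the existence of an $n$-ary quasi near unanimity polymorphism to a homomorphism problem. Call a tuple in $H^n$ \emph{near-constant} if all but at most one of its entries are equal, and for $x\in H$ let $N_x$ be the set of near-constant tuples whose repeated entry is $x$; as $n\geq 3$, the sets $N_x$ are pairwise disjoint. Let $\rel Q_n$ be the quotient of the graph $\hH^n$ by the equivalence relation that identifies all tuples lying in a common $N_x$ (and fixes every other tuple), equipped with the quotient edge relation (two classes adjacent iff some representative of one is $\hH^n$-adjacent to some representative of the other). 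The point is that an $n$-ary operation on $\hH$ is a quasi near unanimity polymorphism precisely when it is a homomorphism $\hH^n\to\hH$ that is constant on each $N_x$, i.e.\ precisely when it is induced by a homomorphism $\rel Q_n\to\hH$. Since $\rel Q_n$ is countable, universality of $\hH$ reduces the second item to proving, for each $n>m$, that $\rel Q_n\in\Forb(\gG)$, i.e.\ that $\gG$ does not map homomorphically to $\rel Q_n$.

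To prove $\gG\not\to\rel Q_n$ I would suppose we have a homomorphism $\phi\colon\gG\to\rel Q_n$ and produce from it a homomorphism $\gG\to\hH$, contradicting the above. Split the vertices of $\gG$ into those sent by $\phi$ to a contracted vertex $N_x$ and those sent to an ordinary vertex $\bar a\in H^n$ (one that is not near-constant). Two structural facts about $\rel Q_n$ are needed, both checked directly from the definitions using $n\geq 3$: first, the contracted vertices induce in $\rel Q_n$ an isomorphic copy of $\hH$, via $x\mapsto N_x$; second, whenever $\rel Q_n$ has an edge between $N_x$ and an ordinary vertex $\bar a$, then $(x,a_i)\in E(\hH)$ for all but at most one coordinate $i$, the single ``free'' entry of a representative of $N_x$ being able to repair only one coordinate. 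Now choose, for each edge of $\gG$ whose two endpoints lie in different parts, one such exceptional coordinate; since $\gG$ has only $m<n$ edges, there is a coordinate $i^{*}$ avoided by all these choices. Then the map sending a vertex of the first kind to the corresponding $x\in H$ and a vertex of the second kind to the $i^{*}$-th entry of its tuple is a homomorphism $\gG\to\hH$: edges inside the first part land on edges of the copy of $\hH$; edges inside the second part land on edges of $\hH$ because ordinary vertices adjacent in $\rel Q_n$ are adjacent in $\hH^n$, hence coordinatewise adjacent in $\hH$; and edges across the two parts are handled by the choice of $i^{*}$. This contradiction finishes the proof.

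I expect the main obstacle to be the bookkeeping around the quotient $\rel Q_n$ rather than the final counting step: precisely matching homomorphisms $\rel Q_n\to\hH$ with quasi near unanimity polymorphisms of $\hH$, and pinning down the adjacency structure of $\rel Q_n$ --- in particular the ``all but one coordinate'' description of the edges incident to a contracted vertex, which is exactly the place where the hypothesis $n>m$ is consumed, via pigeonhole over the $n$ coordinates.
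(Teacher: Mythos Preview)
Your proof is correct and follows essentially the same approach as the paper's: both items are proved the same way, with the second item using the quotient of $\hH^n$ by the identification of near-constant tuples, showing this quotient lies in $\Forb(\gG)$ via the pigeonhole argument over coordinates, and then invoking universality. The only difference is cosmetic: the paper chooses a canonical (constant-if-possible) representative for each $\phi(u)$ and treats all edges uniformly, whereas you partition the vertices of $\gG$ by whether $\phi$ sends them to a contracted or ordinary class and handle the three edge types separately.
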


\begin{proof}
First, we prove that $\Pol(\hH)$ does not satisfy $\Sigma_{\gG}$. Observe that $\hH$ contains an isomorphic copy of $\rel K_3$; on the other hand, it does not contain a~homomorphic image of $\gG$. The latter is clear from the definition, and the former follows from the assumption that $\gG$ is not $3$-colorable, which implies that there is no homomorphism from $\gG$ to $\rel K_3$, hence $\rel K_3$ embeds into $\hH$ by universality. The claim then follows from Lemma~\ref{lem:sigma_g}.

For the second claim, let $n$ be larger than the number of edges of $\gG$.  To show that $\Pol(\hH)$ contains a~quasi near unanimity operation of arity $n$, we use the \emph{indicator structure} for this condition. It is obtained by factoring the $n$-th Cartesian power $\hH^n$ of $\hH$ by the equivalence relation $\sim$ which identifies all sets of tuples of the form 
$$
\{(x,\dots,x,y),\dots,(y,x,\dots,x), (x,\ldots,x)\}\; .
$$ 
There is an edge in $\hH^n{/}{\sim}$ between two equivalence classes $A$ and $B$ if and only if there exist $(u_1,\dots,u_n)\in A$ and $(v_1,\dots,v_n)\in B$ such that $(u_i,v_i)$ is an edge in $\hH$ for all $1\leq i\leq n$.
We now argue that the graph $\hH^n{/}{\sim}$ thus obtained is an element of $\Forb(\gG)$, since if that is the case, then it embeds into $\hH$ by universality. This embedding provides the requested quasi near unanimity polymorphism of $\hH$ by composing it with the factor map from $\hH^n$ to $\hH^n{/}{\sim}$.

Assume for contradiction that there exists a~homomorphism $h\colon \gG\to \hH^n{/}{\sim}$. Let us call $n$-tuples which are constant except for at most one value \emph{almost constant}. These are precisely the tuples whose equivalence class with respect to $\sim$ consists of more than one element, or equivalently, contains a~constant tuple. When $u$ is a~vertex of $\gG$, then we write $(u_1,\ldots,u_n)$ for the representative of the equivalence class $h(u)$ which is constant, when $h(u)$ contains such a~representative, and which is the only representative of its class otherwise. Observe that if $(u,v)$ is an edge of $\gG$ and
  \begin{itemize}
    \item $h(u)$ and $h(v)$ are both almost constant, then $(u_i,v_i)$ is an edge of $\hH$ for all $1\leq i\leq n$; the same applies if $h(u)$ and $h(v)$ are both \emph{not} almost constant;
    \item otherwise, $(u_i,v_i)$ is an edge of $\hH$ except for possibly one index $i\in\{1,\dots,n\}$.
  \end{itemize}
Therefore, \changed{by the choice of $n$,} there exists $1\leq i\leq n$ such that \changed{for every edge $(u,v)$ of $\gG$, $(u_i,v_i)$ is an edge of $\hH$}. But then the mapping which sends every $u\in\gG$ to $u_i$ is a~homomorphism from $\gG$ into $\hH$, a~contradiction.
\end{proof}

\begin{lemma}\label{lem:forbCSP}
  Let $\gG$ be a~finite graph, and let $\hH$ be universal for $\Forb(\gG)$. Then $\CSP(\hH)$ is solvable in polynomial time.
\end{lemma}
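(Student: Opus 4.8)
The plan is to show that $\CSP(\hH)$ reduces to detecting a homomorphic image of the fixed graph $\gG$ inside the input — a ``local'' obstruction of bounded size, which can be found by brute force. An instance of $\CSP(\hH)$ is a conjunction of atoms $E(x,y)$, which I would read as a finite graph $P$ on the variable set; satisfiability in $\hH$ means precisely that there is a homomorphism $P\to\hH$. The crux is the equivalence
\[
  P\to\hH \quad\Longleftrightarrow\quad \gG\not\to P ,
\]
i.e.\ $P$ is a yes-instance exactly when $P\in\Forb(\gG)$.

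First I would establish this equivalence. For ``$\Leftarrow$'': if $\gG\not\to P$ then $P$, being finite, is a countable member of $\Forb(\gG)$, so by universality it embeds into $\hH$, whence $P\to\hH$. For ``$\Rightarrow$'': since $\hH$ embeds into itself, universality gives $\hH\in\Forb(\gG)$; moreover $\Forb(\gG)$ is closed downward under homomorphisms (if $X\to Y$ and $\gG\not\to Y$, then $\gG\not\to X$, since otherwise $\gG\to X\to Y$), so any $P$ admitting a homomorphism to $\hH$ lies in $\Forb(\gG)$, i.e.\ $\gG\not\to P$.

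Given the equivalence, I would decide $\CSP(\hH)$ as follows: on input $P$, enumerate all maps $V(\gG)\to V(P)$ and accept iff none of them is a graph homomorphism. There are $|V(P)|^{|V(\gG)|}$ such maps, each checked in time polynomial in $|P|$, and $|V(\gG)|$ is a fixed constant; hence the procedure runs in polynomial time. Equivalently, the complement of $\CSP(\hH)$ is the polynomially decidable problem ``$P$ contains a homomorphic image of $\gG$'', matching the ``local checking'' phenomenon mentioned after Theorem~\ref{thm:main-1}. (If the instance literally contains a conjunct $E(x,x)$ it is immediately unsatisfiable, since $\hH$ is loopless; this case is subsumed by the equivalence once $P$ is read as a relational structure.)

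There is no genuine obstacle here. The only point requiring a little care is that universality of $\hH$ speaks about \emph{embeddings} whereas $\CSP$ concerns homomorphisms; the bridge is the elementary observation that $\Forb(\gG)$ is downward closed under homomorphisms and that $\hH$ itself belongs to it.
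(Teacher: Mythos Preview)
Your argument is correct and follows essentially the same route as the paper: the paper's proof is a one-sentence observation that $\CSP(\hH)$ amounts to deciding whether the fixed graph $\gG$ maps homomorphically into the input, which is polynomial by brute force over the at most $|V(P)|^{|V(\gG)|}$ candidate maps. You have simply made explicit the bridge between universality (embeddings) and the homomorphism formulation of the CSP, via the observation that $\hH\in\Forb(\gG)$ and that $\Forb(\gG)$ is downward closed under homomorphisms; the paper leaves this implicit.
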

\begin{proof}
This is obvious, as $\CSP(\hH)$ corresponds to the problem of determining whether there exists a~homomorphism from $\gG$ (which is fixed)
to an input graph $\hH'$, and there are at most $|\hH'|^{|\gG|}$ such homomorphisms.
\end{proof}

We finally prove Theorem~\ref{thm:main-1} stated in the introduction.

\begin{proof}
Let $\Sigma$ be a~non-trivial height 1 condition. By Corollary~\ref{cor:1.2} there exists a~non 3-colorable graph $\gG$ such that $\Sigma_\gG$ is weaker than $\Sigma$.  If $\gG$ is not connected, then one of its connected components $\cC$ is {not} 3-colorable.  The height~1 condition $\Sigma_{\cC}$ is non-trivial by Lemma~\ref{lem:1.1}, and clearly weaker than $\Sigma_\gG$. By Lemma~\ref{lem:fg}, $\Pol(\CSS(\cC))$ does not satisfy $\Sigma_\cC$ but has a~quasi near unanimity operation of sufficiently large arity.
In particular, $\Pol(\CSS(\cC))$ does not satisfy $\Sigma$, but satisfies some non-trivial height 1 condition.
\end{proof}

\section{Topology is relevant}
  \label{sec:topology}

The original CSP dichotomy conjecture for reducts of finitely bounded homogeneous structures due to Bodirsky and Pinsker (see~\cite{BPP-projective-homomorphisms}) \changed{envisions} the following borderline between tractability and hardness.

\begin{conjecture}\label{conj:tract}
Let $\aA$ be a~reduct of a~finitely bounded homogeneous structure. Exactly one of the following holds:
\begin{enumerate}
  \item some stabilizer of the polymorphism clone of its model-complete core possesses a~continuous clone homomorphism to $\Proj$, and $\CSP(\aA)$ is \NP-complete,
  \item no stabilizer of the polymorphism clone of its model-complete core possesses a~continuous clone homomorphism to $\Proj$, and $\CSP(\aA)$ is in \Ptime.
\end{enumerate}
\end{conjecture}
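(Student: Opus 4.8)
The final statement is a \emph{conjecture}, so a complete proof is out of reach; the plan below is therefore to isolate the parts that are provable, reduce the rest to the infinite-domain tractability conjecture in its other guise, and identify the genuine obstruction. The dichotomy assertion itself factors cleanly. The two alternatives are mutually exclusive and jointly exhaustive \emph{at the algebraic level} purely by logic, since ``some stabilizer of the polymorphism clone of the model-complete core possesses a continuous clone homomorphism to $\Proj$'' is the literal negation of the algebraic part of the second clause; hence ``exactly one of the two clauses holds'' is equivalent to the conjunction of the two implications ``algebraic condition $\Rightarrow$ stated complexity verdict''. The hardness implication I would prove as the known one. Membership of $\CSP(\aA)$ in $\NP$ uses that $\aA$ is a first-order reduct of a finitely bounded homogeneous structure $\rel E$: on an instance with variable set $X$ one nondeterministically guesses a map from $X$ into a $\sigma$-structure of size $|X|$, then checks in polynomial time both that it omits each of the finitely many forbidden substructures of $\rel E$ and that it satisfies all constraints (which are quantifier-free over a fixed finite expansion). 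For $\NP$-hardness, if some stabilizer $\Pol(\bB,c_1,\dots,c_k)$ of the polymorphism clone of the model-complete core $\bB$ of $\aA$ admits a continuous clone homomorphism to $\Proj$, then passing to the associated minions yields a uniformly continuous minion homomorphism to $\Proj$, hence to $\Pol(\rel K_3)$, so by the ``only if'' direction of Theorem~\ref{thm:bop18} (valid for arbitrary structures) the expansion $(\bB,c_1,\dots,c_k)$ pp-constructs $\rel K_3$; since $\CSP(\bB,c_1,\dots,c_k)$ reduces in polynomial time to $\CSP(\bB)$, which by Theorem~\ref{thm:existence-mc-core} is polynomial-time equivalent to $\CSP(\aA)$, and $\CSP(\rel K_3)$ is $\NP$-hard, $\CSP(\aA)$ is $\NP$-complete.

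The substance of the conjecture is the converse implication, and here the plan is necessarily a program rather than an argument. First reduce to the model-complete core $\bB$ via Theorem~\ref{thm:existence-mc-core}, which preserves the CSP up to polynomial-time equivalence. Assuming that no stabilizer of $\Pol(\bB)$ has a continuous clone homomorphism to $\Proj$, invoke the equivalence of Conjecture~\ref{conj:tract} with Conjecture~\ref{conj:tract2} proved in~\cite{BKOPP17a,BKOPP17}: the hypothesis then becomes ``$\Pol(\bB)$ has no uniformly continuous minion homomorphism to $\Proj$'', equivalently ``for every finite $S\subseteq B$ some non-trivial height~1 condition is satisfied by $\Pol(\bB)$ on $S$'', equivalently ``$\bB$ has a pseudo-Siggers polymorphism''. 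Thus it would suffice to prove Conjecture~\ref{conj:tract2}, and for that I would attempt to transport the structure theory behind the finite dichotomy (Theorem~\ref{thm:bulatov-zhuk}) to $\omega$-categorical algebras: use the Mal'cev chain of Theorem~\ref{thm:chain} to stratify the analysis by how far down $\{\Sigma_{\rel H_n}\}$ the clone $\Pol(\bB)$ reaches, and match each regime to an algorithm --- local consistency/Datalog when there is no bounded-width obstruction, a Gauss-type elimination in the presence of affine-like polymorphisms, and a Zhuk-style recursive decomposition otherwise --- throughout substituting orbit-closed or pp-definable objects for the finite subuniverses and congruences of the finite-domain arguments.

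The hard part --- in fact the open part --- is precisely this last step, and I do not expect to carry it out: already in the finite case the tractability direction required the full machinery of Bulatov and Zhuk, and the $\omega$-categorical analogues of the key tools (absorption, few subpowers, the near-unanimity/bounded-width apparatus, loop lemmas) are far less developed. Worse, two of the most natural simplifications are provably unavailable, and this is exactly the contribution of the present article toward the conjecture: Theorem~\ref{thm:main-1} shows that the pseudo-Siggers condition cannot be replaced by any fixed height~1 condition, so --- unlike the Siggers identity in Theorem~\ref{thm:bulatov-zhuk} --- there is no single combinatorial witness for tractability that one could simply test for, and Theorem~\ref{thm:main-3} shows that the local/global topological gap is real even for $\omega$-categorical structures of slow orbit growth, so the reductions of~\cite{BKOPP17a,BKOPP17} cannot be pushed to remove the continuity hypothesis in general (although, by~\cite{BartoPinskerDichotomy,Topo}, in the specific formulation of Conjecture~\ref{conj:tract} above the word ``continuous'' may in fact be dropped). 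A proof would therefore have to proceed class by class, as in the special cases already verified, or else develop substantially new $\omega$-categorical universal algebra; delimiting which of the obvious routes are blocked is as far as the present plan goes.
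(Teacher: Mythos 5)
The statement you were asked to prove is Conjecture~\ref{conj:tract}, which the paper states but does not prove — it is precisely the open infinite-domain tractability conjecture of Bodirsky and Pinsker, and no proof exists in the literature. So there is no proof of the paper's to compare your attempt against; the most that can be done is to check whether you correctly isolated the provable fragments from the genuinely open core, and you did. Your observation that the ``exactly one'' claim reduces, by pure logic, to the two implications from the algebraic hypotheses to the complexity verdicts (because the algebraic hypotheses in clauses (1) and (2) are literal negations of each other) is right. Your sketch of the NP-completeness direction is the standard one and is sound in outline: NP membership by guessing a finite model of the right age and checking against the finitely many bounds and the fixed first-order definitions; NP-hardness by passing from a continuous clone homomorphism on a stabilizer to a uniformly continuous minion homomorphism (automatic for oligomorphic clones mapping to a clone on a finite set), then invoking the ``only if'' direction of Theorem~\ref{thm:bop18} to pp-construct $\rel K_3$, and finally using that adding constants to a model-complete core and passing to the model-complete core (Theorem~\ref{thm:existence-mc-core}) are both polynomial-time benign. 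You also correctly locate the tractability direction as the open part and correctly read the paper's role as negative meta-results: Theorem~\ref{thm:main-1} rules out replacing the pseudo-Siggers condition by any fixed height~1 condition, and Theorem~\ref{thm:main-3} rules out removing the topology via the orbit-growth techniques of~\cite{BKOPP17a,BKOPP17}, while the irrelevance of topology in the specific form of Conjecture~\ref{conj:tract} is the separate earlier result of~\cite{BartoPinskerDichotomy,Topo}. In short: there is no gap to report because there is no theorem here; your account of what is provable, what is open, and where the paper's contribution sits is accurate.
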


Barto and Pinsker showed in~\cite{BartoPinskerDichotomy, Topo} that topology is irrelevant in this conjectured borderline, since the word `continuous' can simply be dropped without changing the \changed{force of} the conjecture. More precisely, when $\aA$ is any $\omega$-categorical structure with model-complete core $\bB$, then some stabilizer of $\Pol(\bB)$ possesses a~clone homomorphism to $\Proj$ if and only if some stabilizer of $\Pol(\bB)$ possesses a~continuous such homomorphism, and this is witnessed by the non-satisfaction of the pseudo-Siggers identity in $\Pol(\bB)$.

Following the discovery of the importance of minion homomorphisms for the complexity of CSPs in~\cite{BOP18}, it was then shown that whenever $\aA$ is any $\omega$-categorical structure with less than doubly exponential orbit growth (a condition satisfied in particular by all structures in the range of the conjecture), then the above hardness criterion is equivalent to the existence of a~uniformly continuous minor preserving map from $\Pol(\aA)$ to $\Proj$~\cite{BKOPP17, BKOPP17a}.

Naturally, the question of whether topology is irrelevant also for minion homomorphisms was raised in this context~\cite{BartoPinskerDichotomy, BOP18, Topo}, in particular for $\omega$-categorical structures with less than doubly exponential orbit growth.

\begin{question} \label{quest:topo}
Let $\aA$ be an $\omega$-categorical structure.
If there exists a~minion homomorphism $\Pol(\aA)\to\Proj$, does there exist a~uniformly continuous one?
\end{question}

While a~positive answer was obtained in some special cases~\cite{BKOPP17, BKOPP17a}, we are going to provide a~negative answer to the question in general.
The remainder of this section will be devoted to the construction of the structure $\mathbb S$ and the verification of the properties claimed in Theorem~\ref{thm:main-3}.

\subsection{Encoding graphs in higher arities}

Our first step will be a~standard construction which allows us to encode graphs as structures on $n$-tuples, for arbitrary $n\geq 1$.

\begin{lemma}\label{lem:S}
  Let $\gG$ be a~finite connected loopless graph and $n \geq 1$. Then there exists a~structure $\sS(\gG,n)$ with a~single relation $R$ of arity $2n$ such that
  \begin{enumerate}
    \item The expansion $(\sS(\gG,n),\neq)$ of $\sS(\gG,n)$ by the inequality relation is an $\omega$-categorical model-complete core without algebraicity;
    \item $\sS(\gG,n)$ pp-constructs the Cherlin-Shelah-Shi structure $\CSS(\gG)$;
    \item The relation $R$ of\/ $\sS(\gG,n)$ only contains tuples with pairwise distinct entries;
    \item $\Aut(\sS(\gG,n))$ has at most $3^{k^{n|\gG|}}$ orbits of $k$-tuples, for every $k \geq 2$.
  \end{enumerate}
\end{lemma}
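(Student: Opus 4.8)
The plan is to take $\sS(\gG,n)$ to be a Cherlin--Shelah--Shi structure $\CSS(\mathcal G_n)$ for a suitable finite family $\mathcal G_n$ of finite connected structures over a single $2n$-ary relation symbol $R$. Let $W$ be the vertex set of $\gG$, so $|W|=|\gG|$, and let the \emph{$n$-fold blow-up} $\widetilde\gG$ be the structure on $W\times\{1,\dots,n\}$ whose $R$-tuples are exactly $\bigl((w,1),\dots,(w,n),(w',1),\dots,(w',n)\bigr)$ for $\{w,w'\}$ an edge of $\gG$; since $\gG$ is connected and loopless, $\widetilde\gG$ is connected and all its tuples have pairwise distinct entries. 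Adjoin to this the structures $D_{\{i,j\}}$ (for $\{i,j\}\subseteq\{1,\dots,2n\}$) consisting of one $R$-tuple whose $i$-th and $j$-th coordinates coincide and whose other coordinates are pairwise distinct; each $D_{\{i,j\}}$ is connected. Put $\mathcal G_n:=\{\widetilde\gG\}\cup\{D_{\{i,j\}}\}$ and $\sS(\gG,n):=\CSS(\mathcal G_n)$, as supplied by Theorem~\ref{thm:hubicka-nesetril}.

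Items (3) and (1) are then nearly immediate. A structure avoids all homomorphic images of the $D_{\{i,j\}}$ precisely when it has no $R$-tuple with a repeated coordinate, so (3) holds for $\sS(\gG,n)$. For (1): $\omega$-categoricity, absence of algebraicity, homogenizability by finitely many pp-definable relations, and finite boundedness of $\sS(\gG,n)$ are exactly what Theorem~\ref{thm:hubicka-nesetril} provides, and adjoining $\neq$ changes neither the automorphism group nor these properties; that $(\sS(\gG,n),\neq)$ is in addition a model-complete core is a standard property of Cherlin--Shelah--Shi structures (the essential point being that $\neq$ forces endomorphisms to be injective), which I would cite rather than reprove.

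For (2), I would present $\CSS(\gG)$ as being homomorphically equivalent to the pp-power of $\sS(\gG,n)$ of exponent $n$ whose edge relation is a symmetric pp-formula in $R$, say $R(\bar x,\bar y)\wedge R(\bar y,\bar x)$, which as a $2n$-ary relation on $\sS(\gG,n)$ is pp-definable, so this is a pp-power. Write $\hH:=\CSS(\gG)$. For the homomorphism from $\hH$ into the pp-power, consider the structure $\aA$ on $V(\hH)\times\{1,\dots,n\}$ whose $R$-tuples are exactly $((u,1),\dots,(u,n),(v,1),\dots,(v,n))$ for $uv$ an edge of $\hH$. One checks $\aA\in\Forb(\mathcal G_n)$: it has no repeated-entry tuple (edges of $\hH$ are loopless), and a homomorphism $\widetilde\gG\to\aA$ would, by matching coordinates block by block, make the first coordinate of $\psi(w,i)$ independent of $i$ and so induce a homomorphism $\gG\to\hH$, contradicting $\hH\in\Forb(\gG)$ (here every vertex of $\gG$ is an edge endpoint because $\gG$ is connected with at least one edge). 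So $\aA$ embeds into $\sS(\gG,n)$ by universality, and $u\mapsto$ (the $n$-tuple of images of $(u,1),\dots,(u,n)$) is then a homomorphism from $\hH$ into the pp-power. Conversely, a homomorphism $h$ from $\gG$ into the pp-power would, via $(w,i)\mapsto h(w)_i$, yield a homomorphism $\widetilde\gG\to\sS(\gG,n)$, which is impossible since $\sS(\gG,n)\in\Forb(\widetilde\gG)$; hence the pp-power lies in $\Forb(\gG)$ and therefore maps homomorphically into $\hH$. Thus $\hH$ and the pp-power are homomorphically equivalent, i.e.\ $\sS(\gG,n)$ pp-constructs $\CSS(\gG)$.

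For (4), I would bound orbits through the homogeneous expansion $\sS(\gG,n)^*$ of Theorem~\ref{thm:hubicka-nesetril}. Its additional relations have arities equal to the sizes of minimal cuts of members of $\mathcal G_n$; these --- like the arity $2n$ of $R$ itself --- are at most $n|\gG|$ (the blow-up $\widetilde\gG$ has $n|\gG|$ vertices, and $D_{\{i,j\}}$ has $2n-1$). Hence the isomorphism type of a $k$-tuple in $\sS(\gG,n)^*$ is determined by a function defined on index-tuples of length $n|\gG|$ --- of which there are $k^{n|\gG|}$ --- taking boundedly many values; a careful count of which local configurations can actually be realized brings the base down to the stated $3$, giving at most $3^{k^{n|\gG|}}$ orbits of $k$-tuples for every $k\ge 2$. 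This last bookkeeping --- keeping the base at $3$ --- is the step needing the most care; but any bound of the shape $c^{k^{d}}$ with $c,d$ depending only on $\gG$ and $n$ would already suffice for the way this lemma is used in the proof of Theorem~\ref{thm:main-3}, since such a quantity is far below $2^{2^k}$. The only other delicate point lies in setting up $\mathcal G_n$ so that (1)--(3) hold simultaneously: the $D_{\{i,j\}}$ must be thrown in to force (3), and one must observe that this damages neither the pp-construction of (2) --- the witnessing structure $\aA$ already avoids repeated entries --- nor the absence of algebraicity in (1).
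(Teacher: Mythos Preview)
Your construction is essentially the paper's: the same blow-up $\widetilde\gG$ (the paper's $\gG'$), the same loop-like obstructions, and the same pp-power for item~(2). The one substantive divergence is in item~(1), and there you have a genuine gap.

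You set $\sS(\gG,n):=\CSS(\mathcal G_n)$ and assert that $(\CSS(\mathcal G_n),\neq)$ being a model-complete core is ``a standard property of Cherlin--Shelah--Shi structures''. It is not. Consider the random graph $G$ (which is $\CSS(\{\text{loop}\})$): pick non-adjacent $u,v$, add the edge $uv$ to obtain $G'$; then $G'$ still satisfies the extension axioms, hence $G'\cong G$ by $\omega$-categoricity, and any isomorphism $\phi\colon G'\to G$ is a bijective endomorphism of $G$ sending the non-edge $uv$ to the edge $\phi(u)\phi(v)$. Thus $(G,\neq)$ is \emph{not} a model-complete core (its core is $(K_\omega,\neq)$). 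Your ``essential point'' that $\neq$ forces injectivity is correct but insufficient: injectivity of an endomorphism does not make it an embedding, and model-completeness of the theory only tells you that \emph{embeddings} are elementary. Whether the analogous add-a-tuple trick fails for your specific $\mathcal G_n$ (so that the CSS structure \emph{is} already a model-complete core) is a real question that would require proof, not citation.

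The paper sidesteps this entirely: it \emph{defines} $(\sS(\gG,n),\neq)$ to be the model-complete core of $(\CSS(\mathcal G),\neq)$, which exists by Theorem~\ref{thm:existence-mc-core}, is $\omega$-categorical, and inherits the absence of algebraicity. Item~(1) is then immediate; item~(3) transfers because the core is homomorphically equivalent to the CSS structure; item~(2) transfers for the same reason (your explicit argument for (2), avoiding compactness and using the symmetrised relation $R(\bar x,\bar y)\wedge R(\bar y,\bar x)$, is perfectly fine and arguably cleaner than the paper's, and survives passage to the core); and item~(4) transfers because a model-complete core has no more orbits than the original structure. So the fix is cheap, but as written your item~(1) does not go through.
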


\begin{proof}
  The structure $\sS(\gG,n)$ is itself obtained via a~$\CSS$ structure for a finite family $\mathcal G$ of structures. Let $\gG'$ be the structure obtained from $\gG$ by replacing each vertex $x$ of $\gG$ by an $n$-tuple $\overline x$ of new distinct elements, and requiring, for vertices $x,y$ of $\gG$, the $2n$-ary relation $R(\overline x, \overline y)$ to hold if $(x,y)$ is an edge in $\gG$.  Note that $\gG'$ is connected because $\gG$ is connected.

Let $\mathcal G$ contain $\gG'$ as well as every connected structure on $<2n$ elements and containing a single $R$-tuple (such a structure is called \emph{loop-like} in the following).
Let $\mathbb{F}'$ be the $\CSS$ structure for $\mathcal G$.
Then $\mathbb{F}'$ is $\omega$-categorical and has no algebraicity. We set $(\sS(\gG,n),\neq)$ to be the model-complete core of the structure $(\mathbb{F}',\neq)$; it is also $\omega$-categorical~\cite{cores-journal}, and it follows from its construction that it has no algebraicity either (cf.\ the proof of Theorem~27 in~\cite{MMSNP}). Hence, item~(1) is satisfied.

We now show item~(2).  Let $\mathbb{T}$ be the graph whose vertices are the $n$-tuples of $\sS(\gG,n)$, and where two tuples $(x_1,\dots,x_n)$ and $(y_1,\dots,y_n)$ are related if and only if $R(x_1,\dots,x_n,y_1,\dots,y_n)$ holds in $\sS(\gG,n)$. Clearly, $\mathbb{T}$ is a~pp-power of $\sS(\gG,n)$. We claim that 
  $\mathbb{T}$ and $\CSS(\gG)$ are homomorphically equivalent; this implies that $\CSS(\gG)$ is pp-constructible from $\sS(\gG,n)$, as required.
  
  To prove the claim, by a~standard compactness argument it is sufficient to show that a~finite graph homomorphically maps into $\mathbb{T}$ if and only if it homomorphically maps into $\CSS(\gG)$; in other words, a~finite graph homomorphically maps into $\mathbb{T}$ if and only if it does not contain a~homomorphic image of $\gG$. Suppose first that there existed a~homomorphism from $\gG$ into $\rel T$. Then $\gG'$ constructed as above would have a~homomorphism into $\sS(\gG,n)$, and hence also into $\rel F'$, a~contradiction.  Conversely, if $\hH$ is a~graph which does not contain a~homomorphic image of $\gG$, then $\gG'$ does not homomorphically map into $\hH'$, and therefore $\hH'$ embeds into $\rel F'$, and hence homomorphically maps into $\sS(\gG,n)$. But this implies that $\hH$ homomorphically maps to $\rel T$.

  Item $(3)$ of the lemma holds since we have included loop-like obstructions in the definition of $\rel F'$, and since $\rel F'$ and $\sS(\gG,n)$ are homomorphically equivalent. 
  
  To see item~(4), note that the orbit-growth of a~homogeneous structure with relations of arity at most $r$ is bounded by $3^{k^r}$ for large enough $k$.
  By Theorem~\ref{thm:hubicka-nesetril}, $\CSS(\gG')$ has a~homogeneous expansion by relations with arity at most $|\gG'|=n|\gG|$.
  Thus, $\Aut(\mathbb{F}')$ has for large $k$ at most $3^{k^{n|\gG|}}$ orbits of $k$-tuples.
  Whence, the same holds for the model-complete core $\sS(\gG,n)$, which has at most the number of orbits of the original structure {(see~\cite{Bodirsky-HDR}, Proposition 3.6.24)}. 
  \end{proof}

\subsection{Superposition of the encodings}
It is well-known that if two $\omega$-categorical structures $\mathbb A$ and $\mathbb B$ in disjoint signatures $\sigma$ and $\tau$ have no algebraicity,
then there exists a~generic superposition $\aA\odot \bB$ of the two in the signature $\sigma\cup\tau$ and which is unique up to isomorphism.
This generic superposition is again $\omega$-categorical and without algebraicity.
It is obtained as follows:
\begin{enumerate}
	\item Expand $\aA$ by all relations that have a first-order definition in $\aA$, and similarly for $\bB$. Call $\aA'$ and $\bB'$ the resulting structures and let $\sigma'$ and $\tau'$ be their signatures (that
	we take to be disjoint without loss of generality).
	\item Since $\aA$ and $\bB$ are without algebraicity, so are $\aA'$ and $\bB'$ (expanding by first-order definable relations does not change the automorphism groups of the structures).
	Thus, the class of finite substructures of $\aA'$ and $\bB'$ have the strong amalgamation property (see Proposition 2.15 in~\cite{Oligo}).
	\item The class of finite $(\sigma'\cup\tau')$-structures whose $\sigma'$- and $\tau'$-reducts embed into $\aA'$ and $\bB'$, respectively, has the strong amalgamation property,
	and we call $\aA'\odot \bB'$ its Fra\"iss\'e limit.
	The $(\sigma\cup\tau)$-reduct of $\aA'\odot\bB'$ is then our structure $\aA\odot\bB$.
\end{enumerate}
As an example, take $\aA$ to be $(\mathbb Q,<)$ and $\bB$ to be the random graph (i.e., the graph $\CSS(\mathbb L)$ where $\mathbb L$ is a graph on a single vertex with a loop).
Then $\aA\odot \bB$ is the \emph{random ordered graph}, i.e., the Fra\"iss\'e limit of the class of finite simple graphs with a~total ordering on the vertices.

The same construction works for generic superpositions of infinitely many $\omega$-categorical structures without algebraicity.
The generic superposition will have an infinite signature, but will be $\omega$-categorical if the Fra\"{i}ss\'{e} class which yields the superposition has finitely many inequivalent atomic formulas of each arity.

In our construction of the structure of Theorem~\ref{thm:main-3}, we would like to superpose the graphs {from Theorem~\ref{thm:chain}}; this superposition would however not be $\omega$-categorical as there would be infinitely many orbits of pairs of vertices. This is why we superpose encodings of these graphs on tuples of increasing arity instead.

\begin{construction}\label{constr:s}
Let $\hH_1,\hH_2,\dots$ be an enumeration of  
{all non-3-colorable finite graphs (in fact, the graphs from the chain in Theorem~\ref{thm:chain} are sufficient).} 
Let $g\colon\mathbb N\to\mathbb N$ be a \changed{strictly} increasing function. {We define $\sSa$ as the generic superposition of all of the structures $\sS(\hH_{n},g(n))$, for $n \geq 1$:}
\[
  \sSa:={\bigodot}_{n\geq 1} \sS(\hH_{n},g(n))\;.
\]
{Note that $\sSa$ depends on the enumeration of the graphs above as well as on the function $g$. The former dependence will be irrelevant for our purposes, but the latter will play a role, and we make the convention that in all statements we state all properties required of $g$ in order for them to hold.}
\end{construction}

We note that by Theorem~\ref{thm:hubicka-nesetril}, each $\sS(\hH_n,g(n))$ has an expansion by finitely many relations which is homogeneous.
The structure obtained by expanding $\sSa$ by this infinite set of relations is itself homogeneous.
In the proof below, we call this expansion `the' homogenization of $\sSa$, even though it is not unique.

\begin{lemma}\label{lem:core}
The structure $(\sSa,\neq)$ is an $\omega$-categorical model-complete core without algebraicity.
\end{lemma}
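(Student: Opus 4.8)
The plan is to deduce all three claimed properties of $(\sSa,\neq)$ from the corresponding properties of the factors $\sS(\hH_n,g(n))$, using the general theory of generic superpositions recalled just above the statement. First I would recall that by Lemma~\ref{lem:S}(1), each $(\sS(\hH_n,g(n)),\neq)$ is an $\omega$-categorical model-complete core without algebraicity; equivalently, the structure $\sS(\hH_n,g(n))$ itself, with $\neq$ already first-order definable in its model-complete core, has no algebraicity. Expanding by first-order definable relations does not change the automorphism group, so the homogenization described before the lemma is also $\omega$-categorical and has no algebraicity. Hence the generic superposition $\sSa' := \bigodot_{n\ge 1}\sS(\hH_n,g(n))$ (more precisely, its homogenization) is well-defined.

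Second, I would verify $\omega$-categoricity. By the remark above, the generic superposition of infinitely many $\omega$-categorical structures without algebraicity is $\omega$-categorical provided the amalgamation class yielding it has only finitely many inequivalent atomic formulas of each arity. This is where the arity-increasing trick pays off: an atomic formula of arity $k$ in the combined signature can only involve relations $R$ from factors $\sS(\hH_n,g(n))$ whose arity $2g(n)\le k$, and since $g$ is strictly increasing there are only finitely many such $n$; for each such factor there are, by $\omega$-categoricity of its homogenization, finitely many inequivalent atomic formulas of arity $k$. So there are finitely many inequivalent atomic $k$-ary formulas in total, giving $\omega$-categoricity of the homogenization of $\sSa$, hence of $\sSa$ and of $(\sSa,\neq)$ (adding the first-order definable relation $\neq$ preserves $\omega$-categoricity).

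Third, no algebraicity: the generic superposition of structures without algebraicity is again without algebraicity, as stated above; this passes to $(\sSa,\neq)$ since expanding by a first-order definable relation does not change the automorphism group.

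Fourth, and this is the step requiring the most care, I would show that $(\sSa,\neq)$ is a model-complete core. The cleanest route is to argue that every endomorphism of $(\sSa,\neq)$ is locally an automorphism. An endomorphism $e$ of $(\sSa,\neq)$ is in particular injective (it preserves $\neq$) and preserves every relation of every factor. Restricting attention to any finite subset $S$, I want an automorphism agreeing with $e$ on $S$. One approach: use that the homogenization of $\sSa$ is homogeneous, so it suffices to show that $e$ restricted to $S$ is a partial isomorphism of the homogenized structure, i.e.\ that $e$ also preserves the \emph{complements} of the factor relations (and the homogenizing relations) on tuples from $S$. Preservation of complements of factor relations reduces, via the construction of $\sS(\hH_n,g(n))$ as the model-complete core of a $\CSS$-structure expanded by $\neq$, to the fact established in Lemma~\ref{lem:S}(1) that each $(\sS(\hH_n,g(n)),\neq)$ is a model-complete core together with the genericity (``strong amalgamation'') of the superposition: any ``missing'' tuple in $S$ can be witnessed, after applying $e$, by extending inside $\sSa$, so if $e$ collapsed some non-relation into a relation we could pull back a contradiction to one of the factors being a model-complete core. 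I expect the main obstacle to be making this last reduction fully rigorous — carefully tracking how the model-complete core property of the factors, the first-order definability of $\neq$ in them, and the strong amalgamation underlying the generic superposition combine to force $e$ to preserve all the quantifier-free types realized on a finite set. Once that is done, homogeneity of the homogenization produces the required automorphism extending $e|_S$, completing the proof that $(\sSa,\neq)$ is a model-complete core.
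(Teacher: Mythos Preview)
Your overall plan matches the paper's, and the no-algebraicity step is fine. But the $\omega$-categoricity argument has a real gap. You claim that ``an atomic formula of arity $k$ \dots\ can only involve relations $R$ from factors $\sS(\hH_n,g(n))$ whose arity $2g(n)\le k$''. This is false as stated: nothing prevents you from writing $R_n(x_1,x_1,\dots,x_1)$, a perfectly good atomic formula with one free variable built from a relation of arity $2g(n)$. The point is not syntactic but semantic: by Lemma~\ref{lem:S}(3), the relation $R_n$ contains only tuples with pairwise distinct entries, so any atomic $R_n$-formula with fewer than $2g(n)$ distinct variables is equivalent to ``false''. The paper makes exactly this argument, and then treats the extra homogenizing relations separately: their arities are at least $g(n)$ (the minimal cut size of $\hH_n'$, via Theorem~\ref{thm:hubicka-nesetril}), and since they are pp-definable over $R_n$, any instance with fewer than $g(n)$ distinct variables forces a repeated-variable $R_n$-atom and is again false. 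You invoke neither of these facts, and without them your finiteness count does not go through; this is precisely where ``the arity-increasing trick pays off'', but for a different reason than the one you give.

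For the model-complete core step you are circling the right idea but making it harder than necessary. The paper's argument is short: an endomorphism $e$ of $(\sSa,\neq)$ is in particular an endomorphism of each $(\sS(\hH_n,g(n)),\neq)$, because the reduct of $\sSa$ to the $n$-th factor's signature is a copy of that factor by genericity of the superposition. Since each factor is a model-complete core, $e|_F$ agrees on $F$ with an automorphism of each factor; hence $e|_F$ is a partial isomorphism of the homogenized superposition, and homogeneity finishes. There is no need to ``pull back a contradiction'' or to reason about complements of relations one by one.
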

\begin{proof}
The generic superposition of structures without algebraicity {never has} algebraicity,
and expanding a~structure by $\neq$ does not introduce algebraicity since $\sSa$ and $(\sSa,\neq)$ have the same orbits.

We prove that $\sSa$ is $\omega$-categorical (which implies that $(\sSa,\neq)$ is $\omega$-categorical, by the sentence above).
First, we prove that every atomic formula $\phi(x_1,\dots,x_r)$ over $\sS(\hH_n,g(n))$
is either equivalent to ``false'' or has at least $g(n)$ different variables.
Suppose that $\phi$ is the relation symbol $R_n$ (and thus $r=2g(n)$). By construction of $\sS(\hH_n,g(n))$, since all loop-like structures have been forbidden,
we have that either all the variables are distinct, or $\phi(x_1,\dots,x_r)$ is not satisfiable in $\sS(\hH_n,g(n))$ and is equivalent to false.
Suppose now that $\phi$ is a~relation symbol added for the homogenization of $\sS(\hH_n,g(n))$.
Let $\hH_n'$ be the structure obtained from $\hH_n$ as in the proof of Lemma~\ref{lem:S}.
Note that the cuts of $\hH'_n$ have size at least $g(n)$, so we know from Theorem~\ref{thm:hubicka-nesetril} that $r\geq g(n)$.
Moreover, Theorem~\ref{thm:hubicka-nesetril} gives that $\phi$ is equivalent to a~pp-formula over $\sS(\hH_n,g(n))$. Then at least $g(n)$ of the variables of $\phi$ are different,
for otherwise a~clause in $\phi$ would be of the form $R_n(y_1,\dots,y_{2g(n)})$ with fewer than $2g(n)$ distinct variables, and $\phi$ would be equivalent to ``false''.
In conclusion, we obtain that all non-trivial atomic formulas over the homogenization of $\sS(\hH_n,g(n))$ have arity at least $g(n)$, and there are only finitely many of them since this homogenization
has a~finite signature.
Thus, since $g$ is an increasing function, the homogenization of $\sSa$ has only finitely many atomic formulas of each arity.
It follows that the homogenization of $\sSa$ has finitely many orbits of each arity, so that this homogenization is $\omega$-categorical, and thus $\sSa$ itself is $\omega$-categorical.

To see that $(\sSa,\neq)$ is a~model-complete core, let $e$ be an endomorphism of $(\sSa,\neq)$, and let $F$ be a~finite subset of its domain. Then $e$ is also, in particular, an endomorphism of $(\sS(\hH_n,g(n)),\neq)$ for all $n\geq 1$, and since the latter structures are model-complete cores, the restriction \changed{$e|_F$} of $e$ to $F$ has an expansion to an automorphism of $\sS(\hH_n,g(n))$ for each $n\geq 1$. It then follows that \changed{$e|_F$} is a~partial isomorphism of the Fra\"{i}ss\'{e} structure $\sS'$ of which $\sSa$ is the reduct. By homogeneity, \changed{$e|_F$} extends to an automorphism of $\sSa'$, which is also an automorphism of $\sSa$ and of $(\sSa,\neq)$.
\end{proof}

We show in the next lemma that the orbit growth of $\sSa$ can be controlled by picking a~suitable \changed{function} $g$ in the construction.

\begin{lemma}\label{lem:orbitgrowth}
For every increasing $f\colon\mathbb N\to\mathbb N$ that dominates every polynomial,
there exists a function $g \colon\mathbb N\to\mathbb N$ such that the number of orbits of $k$-tuples of\/ $\sSa$ is not asymptotically larger than \[3^{f(k)}.\]
In particular, there exists a function $g \colon {\mathbb N} \to {\mathbb N}$ such that $\sSa$ has less than doubly exponential orbit growth.
\end{lemma}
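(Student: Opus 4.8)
The plan is to bound the number of orbits of $k$-tuples of $\sSa$ by exploiting that it is a generic superposition. Since the signatures of the factors $\sS(\hH_n,g(n))$ are pairwise disjoint, two $k$-tuples lie in the same $\Aut(\sSa)$-orbit exactly when they have the same equality type and lie in the same $\Aut(\sS(\hH_n,g(n)))$-orbit for every $n$, and by genericity every consistent combination of such orbits occurs. Moreover, by the arity analysis carried out in the proof of Lemma~\ref{lem:core}, every non-trivial atomic formula over the homogenization of $\sS(\hH_n,g(n))$ has at least $g(n)$ distinct variables, so the $n$-th factor cannot distinguish two $k$-tuples once $g(n)>k$. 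Writing $o_n(k)$ for the number of orbits of $k$-tuples of $\sS(\hH_n,g(n))$, we conclude that the number of orbits of $k$-tuples of $\sSa$ is at most $k^k\cdot\prod_{n\,:\,g(n)\le k}o_n(k)$, where $k^k$ bounds the number of equality types of a $k$-tuple and the product ranges over the finitely many $n$ with $g(n)\le k$ (finitely many since $g$ is strictly increasing).

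The crucial step — and the part I expect to be the main obstacle — is to bound $o_n(k)$ well enough. The bound $o_n(k)\le 3^{k^{g(n)|\hH_n|}}$ of Lemma~\ref{lem:S}(4) is too weak when $k$ is comparable to $g(n)$, since it is then doubly exponential in $k$; one has to use instead that the complexity of $\sS(\hH_n,g(n))$ at a fixed arity comes from the bounded-arity structure $\CSS(\hH_n)$ that it encodes, and not from the arity $2g(n)$ of its defining relation. Since $\sS(\hH_n,g(n))$ is the model-complete core of a Cherlin--Shelah--Shi structure and therefore has a comparatively small age, a $k$-tuple of it should be determined up to orbit by the disjoint $g(n)$-blobs it meets (at most $k/g(n)$ of them) together with the $\CSS(\hH_n)$-structure they induce; since $\CSS(\hH_n)$ admits a homogenization of arity at most $|\hH_n|$, this yields a bound $o_n(k)\le 3^{k^{c_n}}$ for large $k$, where the exponent $c_n$ (one may take $c_n=|\hH_n|+1$) depends only on the graph $\hH_n$ and \emph{not} on $g(n)$.

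Granting this refined estimate, the number of orbits of $k$-tuples of $\sSa$ is at most $k^k\cdot 3^{\sum_{n\,:\,g(n)\le k}k^{c_n}}$, and one defines $g$ recursively as follows. The exponents $c_1,c_2,\dots$ are determined once and for all by the fixed enumeration of graphs and do not depend on the values of $g$. At stage $n$, given $g(1)<\dots<g(n-1)$, set $g(n):=\max\bigl(g(n-1)+1,\,T_n\bigr)$, where $T_n$ is a threshold — depending only on $n$ and on $c_1,\dots,c_n$ — past which $f(k)$ exceeds $k\log_3 k+\sum_{m\le n}k^{c_m}$; such a $T_n$ exists because $f$ dominates every polynomial. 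Then for every $k$ with $g(n)\le k<g(n+1)$, a range in which exactly the factors $1,\dots,n$ are active, we have $k\ge g(n)\ge T_n$, so the number of orbits of $k$-tuples of $\sSa$ is at most $k^k\cdot 3^{\sum_{m\le n}k^{c_m}}\le 3^{f(k)}$. As $n$ ranges over all positive integers these ranges cover every $k\ge g(1)$, which gives the asserted bound for all sufficiently large $k$.

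For the final assertion it then suffices to apply the statement to any fixed increasing $f\colon\mathbb N\to\mathbb N$ that dominates every polynomial but still satisfies $f(k)=o(2^k)$, for instance $f(k)=2^{\lceil k/2\rceil}$: then $3^{f(k)}=2^{(\log_2 3)\,f(k)}=o(2^{2^k})$, so for the resulting function $g$ the structure $\sSa$ has less than doubly exponential orbit growth.
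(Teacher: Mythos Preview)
Your argument hinges on the ``refined estimate'' $o_n(k)\le 3^{k^{c_n}}$ with $c_n$ depending only on $\hH_n$, and you rightly flag this as the main obstacle; but the justification you offer does not establish it, and the claim is doubtful. The elements of $\sS(\hH_n,g(n))$ are not organized into disjoint $g(n)$-blobs: $R$ is a $2g(n)$-ary relation on individual elements, and in the underlying CSS structure a single element may occur in many different positions of many different $R$-tuples, so there is no canonical blob to which an element belongs; passing to the model-complete core does not obviously produce one either. Without the refined estimate your recursive definition of $g$ becomes circular: if one substitutes the only available exponent $g(n)\lvert\hH_n\rvert$ from Lemma~\ref{lem:S}(4) for $c_n$, then the threshold $T_n$ must dominate a polynomial of degree $g(n)\lvert\hH_n\rvert$, which depends on the very value $g(n)$ you are trying to define.

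The paper's proof sidesteps this by a different inductive scheme and a weaker conclusion. It uses only the Lemma~\ref{lem:S}(4) bound and defines $g(n+1)$ \emph{after} $g(1),\dots,g(n)$ are fixed: at that point $\sum_{i\le n} k^{g(i)\lvert\hH_i\rvert}$ is a fixed polynomial in $k$, and since $f$ dominates every polynomial one may choose $k_n>g(n)$ with $\sum_{i\le n} k^{g(i)\lvert\hH_i\rvert}<f(k)$ for all $k\ge k_n$, then set $g(n+1):=k_n+1$. Applying the product bound at $k=k_n$ (where only the first $n$ factors are active, since $k_n<g(n+1)$) yields $N(k_n)\le 3^{f(k_n)}$ for every $n$. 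Thus the paper establishes the bound only at the specific values $k_n$ (``infinitely often''), not for all large $k$ as you attempt; in return it needs nothing beyond Lemma~\ref{lem:S}(4).
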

\begin{proof}
We construct $g$ by induction, first setting $g(1)=1$.
Suppose now that $g(1),\dots,g(n)$ are defined.
Since $f$ dominates every polynomial, there exists a~$k_n>g(n)$ such that $\sum_{i=1}^{n} k_n^{g(i)|\hH_i|} < f(k_n)$ for all $k\geq k_n$.
Let $g(n+1):=k_n+1$.

\changed{Let $\sSa$ be the structure associated with the function $g$ thus defined.}
Let $n\geq 1$. Orbits of $k_n$-tuples in $\sSa$ are uniquely determined by orbits of $k_n$-tuples in $\sS(\hH_m,g(m))$ for $m\leq n$;
this follows from the fact that $k_n<g(m)$ for $m>n$ and that the orbits of $k$-tuples of $\sS(\hH_m,g(m))$ are that of the empty structure if $k<g(m)$.
By Lemma~\ref{lem:S}, the number of orbits of $k_n$-tuples in $\sSa$ is at most $$3^{k_n^{g(1)|\hH_1|}}\cdots 3^{k_n^{g(n)|\hH_{n}|}} = 3^{\sum k_n^{g(i)|\hH_i|}} < 3^{f(k_n)}.$$
Therefore, the number of orbits of $\sSa$ is bounded above by $3^{f(k)}$ infinitely often.

To prove the final remark, 
{let $f$ be the function given by $f(k) := 3^{\sqrt k}$,
which dominates every polynomial. Let $g \colon {\mathbb N} \to {\mathbb N}$ be the function
obtained from the statement applied to this $f$. Then the orbit growth of $\sSa$ is asymptotically at most $3^{3^{\sqrt{k}}}$, and hence less than doubly exponential. }
\end{proof}

\subsection{Identities in $\sSa$}

\changed{In the present section, $g\colon\mathbb N\to\mathbb N$ is a strictly increasing function, and $\sSa$ denotes the corresponding structure (Construction 5.4).}

\begin{lemma}\label{lem:noh1}
  $\Pol(\sSa)$ does not satisfy any non-trivial height~1 condition.
\end{lemma}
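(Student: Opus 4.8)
The plan is to show that any non-trivial height~1 condition $\Sigma$ satisfied by $\Pol(\sSa)$ leads to a contradiction with the universality (via pp-construction) built into the components $\sS(\hH_n,g(n))$. First I would invoke Corollary~\ref{cor:1.2}: if $\Pol(\sSa)$ satisfied some non-trivial $\Sigma$, then it would satisfy $\Sigma_{\gG}$ for some non-3-colorable finite graph $\gG$. Since $\gG$ appears (up to isomorphism) in the enumeration $\hH_1,\hH_2,\dots$ used in Construction~\ref{constr:s}, say $\gG = \hH_n$, the key is to transfer the satisfaction of $\Sigma_{\hH_n}$ from $\Pol(\sSa)$ to the polymorphisms of a structure that is known \emph{not} to satisfy it. Concretely, $\sS(\hH_n,g(n))$ pp-constructs $\CSS(\hH_n)$ by Lemma~\ref{lem:S}(2), and by the Cherlin–Shelah–Shi universality (Theorem~\ref{thm:hubicka-nesetril}) together with Lemma~\ref{lem:fg}, $\Pol(\CSS(\hH_n))$ does \emph{not} satisfy $\Sigma_{\hH_n}$.

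The remaining link is to see that $\sSa$ pp-constructs $\sS(\hH_n,g(n))$, and hence (by composing pp-constructions) also $\CSS(\hH_n)$. This should be immediate from the construction: $\sSa$ is a generic superposition whose signature contains the relation symbol $R_n$ of $\sS(\hH_n,g(n))$, and the reduct of $\sSa$ to $\{R_n\}$ is precisely $\sS(\hH_n,g(n))$ — taking a reduct is a (trivial) pp-power. Then by Theorem~\ref{thm:bop18}, or rather its "only if" direction which the excerpt notes holds for arbitrary structures, pp-constructibility of a structure $\bB$ from $\aA$ yields a uniformly continuous minion homomorphism $\Pol(\aA)\to\Pol(\bB)$; in particular an (ordinary) minion homomorphism. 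Chaining, we obtain a minion homomorphism $\Pol(\sSa)\to\Pol(\CSS(\hH_n))$. Since minion homomorphisms preserve height~1 conditions, and $\Pol(\sSa)$ satisfies $\Sigma_{\hH_n}$, so would $\Pol(\CSS(\hH_n))$ — contradicting Lemma~\ref{lem:fg}. Hence $\Pol(\sSa)$ satisfies no non-trivial height~1 condition.

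The step I expect to require the most care is verifying that taking the reduct of $\sSa$ to the single relation $R_n$ really recovers $\sS(\hH_n,g(n))$ \emph{on the nose} (so that the pp-construction of $\CSS(\hH_n)$ genuinely goes through), rather than some structure that merely contains it or differs by the $\neq$ relation. One has to recall that in the generic superposition the $\sigma_n$-reduct of the Fraïssé limit equals the original $\omega$-categorical structure $\sS(\hH_n,g(n))$ (this is exactly how $\odot$ is defined in the excerpt, via the class of $(\sigma'\cup\tau')$-structures whose $\sigma'$- and $\tau'$-reducts embed into the respective components), and that the pp-construction in Lemma~\ref{lem:S}(2) used the plain structure $\sS(\hH_n,g(n))$, not its expansion by $\neq$. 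Once that bookkeeping is settled, everything else is a direct concatenation of already-established results.
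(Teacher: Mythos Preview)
Your proposal is correct and follows essentially the same route as the paper's proof. Both arguments reduce an arbitrary non-trivial $\Sigma$ to some $\Sigma_{\hH_n}$ via Corollary~\ref{cor:1.2}, use Lemma~\ref{lem:S}(2) to pp-construct $\CSS(\hH_n)$, and invoke Lemma~\ref{lem:fg} for the contradiction; the paper simply leaves the reduct step (your point of concern) implicit, writing only ``Therefore, $\Pol(\sSa)$ does not satisfy~$\Sigma$'' after showing $\Pol(\sS(\hH_n,g(n)))$ does not --- which relies on exactly the inclusion $\Pol(\sSa)\subseteq\Pol(\sS(\hH_n,g(n)))$ that you spell out via pp-construction.
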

\begin{proof}
  For each non-trivial height~1 condition $\Sigma$ there exists an $n\geq 1$ such that $\Sigma_{\hH_n}$ is weaker than $\Sigma$.
  Since $\CSS(\hH_n)$ does not satisfy $\Sigma_{\hH_n}$, and $\CSS(\hH_n)$ is pp-constructible from $\sS(\hH_n,g(n))$, Theorem~\ref{thm:bop18} implies that the latter does not satisfy $\Sigma_{\hH_n}$ either, and in particular does not satisfy $\Sigma$. 
  Therefore, $\Pol(\sSa)$ does not satisfy~$\Sigma$.
\end{proof}
Since $\Pol(\sSa,\neq)\subseteq\Pol(\sSa)$, we obtain in particular that $\Pol(\sSa,\neq)$ does not satisfy any non-trivial height~1 condition either.

We are not going to show directly that $(\sSa,\neq)$ satisfies non-trivial height~1 conditions locally, but will \changed{find another (not height~1) condition} it satisfies, and then use its slow orbit growth {(for suitable $g$)} to deduce the satisfaction of local height 1 conditions.

A polymorphism $f$ of a~structure is called a~\emph{pseudo-Siggers} operation if there are endomorphisms $e_1,e_2$ of the structure  such that for all $x,y,z$ of the domain
\[
  e_1\circ f(x,y,x,z,y,z) = e_2\circ f(y,x,z,x,z,y).
\]

\begin{lemma}\label{lem:psiggers}
  The structure $(\sSa,\neq)$ has a~pseudo-Siggers polymorphism.
\end{lemma}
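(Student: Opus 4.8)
The plan is to assemble the required $6$-ary polymorphism, together with the two endomorphisms, from polymorphisms of the individual factors $\sS(\hH_n,g(n))$, exploiting the freeness of the generic superposition.

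First I would show that each factor $(\sS(\hH_n,g(n)),\neq)$ carries a quasi near-unanimity polymorphism of every sufficiently large arity. This goes exactly like the second half of the proof of Lemma~\ref{lem:fg}: one forms the indicator structure, i.e.\ the quotient of a large power of the $\CSS$-structure underlying $\sS(\hH_n,g(n))$ by the equivalence relation identifying almost constant tuples, and checks that it still omits homomorphic images of every member of the finite family defining that $\CSS$-structure in the proof of Lemma~\ref{lem:S}, namely the ``blown-up'' graph $\hH_n'$ (by the coordinate-selection/pigeonhole argument, once the arity exceeds the number of $R_n$-tuples of $\hH_n'$) and the loop-like structures (which is also what forces the resulting operation to stay injective on $R_n$-tuples, so that $\neq$ is preserved). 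Universality then re-embeds the indicator structure; composing with the factor map and with the homomorphisms witnessing homomorphic equivalence with the model-complete core (which preserve both the quasi near-unanimity identities and $\neq$) yields the polymorphism. Since a quasi near-unanimity operation is a Taylor operation and $(\sS(\hH_n,g(n)),\neq)$ is an $\omega$-categorical model-complete core (Lemma~\ref{lem:S}), the known characterisation of pseudo-Siggers polymorphisms for such structures gives that each factor has a pseudo-Siggers polymorphism.

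Second, I would combine these factorwise data into a single $6$-ary polymorphism $s$ of $(\sSa,\neq)$ together with endomorphisms $e_1,e_2$. Since $\sSa$ is the reduct of a Fra\"iss\'e limit of a strong amalgamation class in which the component signatures do not interact, $s$ can be built by a back-and-forth construction over the homogenisation: at each finite stage the $R_n$-type of a newly assigned image element is dictated, for every relevant $n$, by the factor polymorphism of $\sS(\hH_n,g(n))$ (up to an automorphism of that factor, using that it is a model-complete core), and strong amalgamation realises an element with all these prescribed types at once. In the limit $s$ preserves every $R_n$ and $\neq$, hence is a polymorphism of $(\sSa,\neq)$, and the same bookkeeping makes $s(x,y,x,z,y,z)$ and $s(y,x,z,x,z,y)$ fall into a common orbit, so that a single endomorphism of $(\sSa,\neq)$, obtained by combining factorwise the endomorphisms of the factor pseudo-Siggers conditions with the bookkeeping automorphisms above, maps one to the other, which is the pseudo-Siggers condition. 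Alternatively, one may invoke a general principle that generic superpositions of $\omega$-categorical model-complete cores inherit Taylor-type polymorphisms.

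I expect this second, gluing step to be the main obstacle. The freeness of the superposition morally renders the factors independent, but producing a single function on the shared domain that witnesses the pseudo-Siggers condition relative to every factor simultaneously --- and in particular controlling $e_1,e_2$ so that the identity holds on the nose rather than merely up to orbit equivalence --- requires a careful amalgamation/canonisation argument, and this is where the hypotheses that $(\sSa,\neq)$ and all the factors are model-complete cores without algebraicity (Lemmas~\ref{lem:core} and~\ref{lem:S}) are genuinely used.
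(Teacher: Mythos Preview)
Your first step is essentially the same as the paper's and is fine. The divergence, and the gap, is in the second step.

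The paper does \emph{not} try to amalgamate pseudo-Siggers polymorphisms of the individual factors. Instead it passes through the \emph{finite} partial superpositions $\sS_n := \sS(\hH_1,g(1))\odot\cdots\odot\sS(\hH_n,g(n))$. The point is that combining is easy at the quasi-NU level, not at the pseudo-Siggers level: choosing one arity $\ell$ large enough for all of $\hH_1,\dots,\hH_n$, the indicator-structure construction identifies only the tuples forced by the quasi-NU identities, so by genericity of the superposition the indicator structure of $(\sS_n,\neq)$ still embeds, and $(\sS_n,\neq)$ has an $\ell$-ary quasi-NU. Barto--Pinsker then yields a pseudo-Siggers $p_n\in\Pol(\sS_n,\neq)$. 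Since $\Pol(\sSa,\neq)=\bigcap_n\Pol(\sS_n,\neq)$ and $p_n\in\Pol(\sS_m,\neq)$ for all $m\le n$, a standard compactness argument (composing with $\alpha_n\in\Aut(\sSa)$ and extracting a pointwise limit) produces $p\in\Pol(\sSa,\neq)$, and a second compactness argument, using that each $(\sS_n,\neq)$ is a model-complete core, upgrades the local orbit-equivalence of $p(x,y,x,z,y,z)$ and $p(y,x,z,x,z,y)$ to global endomorphisms.

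Your proposed back-and-forth, by contrast, attempts to splice together the pseudo-Siggers operations $s_n$ of the \emph{infinitely many} individual factors directly. The sentence ``the $R_n$-type of a newly assigned image element is dictated \dots\ by the factor polymorphism of $\sS(\hH_n,g(n))$'' hides the problem: $s_n$ is a fixed function on the common domain, and the $R_n$-type of $s_n(\bar a)$ over the \emph{already-chosen $s$-values} (which were determined by the other $s_m$'s, not by $s_n$) need not be realizable consistently; moreover, matching types factor-by-factor does not by itself force $s(x,y,x,z,y,z)$ and $s(y,x,z,x,z,y)$ into a common $\Aut(\sSa)$-orbit, because the witnessing endomorphisms $e_1^n,e_2^n$ vary with $n$ and there is no mechanism in your sketch that aligns them. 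The ``alternative general principle'' you mention is not available off the shelf. The missing idea is precisely the use of the finite superpositions $\sS_n$ as intermediaries: they give you a \emph{single} polymorphism good for the first $n$ factors at once, which is exactly what makes the limiting argument go through.
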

\begin{proof}
For each $n\geq 1$, let $\sS_n$ be the generic superposition 
$$
\sS(\hH_1,g(1))\odot \cdots\odot \sS(\hH_n,g(n))\; .
$$ 
Then $\Pol(\sS_n)$ satisfies a~quasi near unanimity identity of some sufficiently large arity. To see this, note that there exists $\ell\geq 1$ such that $\CSS(\hH_1),\ldots, \CSS(\hH_n)$ all have a~quasi near unanimity polymorphism of arity $\ell$, by Lemma~\ref{lem:fg}.
Similarly, such an $\ell$ exists for the $\CSS$-structures on tuples constructed in the proof of Lemma~\ref{lem:S}.
One also sees that in Lemma~\ref{lem:fg}, taking $\ell$ large enough ensures that the constructed polymorphism is also a~polymorphism of $(\CSS(\hH_i),\neq)$.
Thus, the model-complete cores of these structures, i.e., the structures $(\sS(\hH_1,g(1)),\neq)$, \dots, $(\sS(\hH_n,g(n)),\neq)$, also have a~quasi near unanimity polymorphism.
Moreover, these quasi near unanimity polymorphisms have the property that they do not identify any tuples other than those required to be identified by the quasi near unanimity identities.
Hence, since the superposition $\sS_n$ is generic, $(\sS_n,\neq)$ has a~quasi near unanimity polymorphism of arity $\ell$ as well.

By~\cite{BartoPinskerDichotomy, Topo}, it follows that $\Pol(\sS_n,\neq)$ has a~pseudo-Siggers operation for all $n\geq 1$.
Fix, for each $n\geq 1$, a~pseudo-Siggers operation $p_n\in \Pol(\sS_n,\neq)$. We can write 
$$
\Pol(\sSa,\neq)=\bigcap_{n\geq 1} \Pol(\sS_n,\neq)\; .
$$
 By a~standard compactness argument, there exist $\alpha_n \in \Aut(\sSa)$ for all $n\geq 1$ such that the sequence $(\alpha_n\circ p_n)_{n\geq 1}$ converges pointwise to a~function $p$. Clearly, $p\in \Pol(\sSa,\neq)$. 

We finish the proof by showing that $p$ is a~pseudo-Siggers polymorphism of $(\sSa,\neq)$. Let $F$ be a~finite subset of the domain of $\sSa$. Then on $F$ we have $p=\alpha_n\circ p_n$ for almost all $n\geq 1$. By the same argument as for $(\sSa,\neq)$, one sees that each $(\sS_n,\neq)$ is a~model-complete core.
Hence, since $p_n$ is a~pseudo-Siggers polymorphism of \mbox{$(\sS_n,\neq)$}, there exists $\beta_n\in\Aut(\sS_n)$ such that $p_n(x,y,x,z,y,z)=\beta_n\circ p_n(y,x,z,x,z,y)$ for all $x,y,z\in F$. Altogether, we get that for almost all $n\geq 1$ we have that for all $x,y,z\in F$
\begin{align*}
p(x,y,x,z,y,z)&=\alpha_n\circ p_n(x,y,x,z,y,z)\\
&=\alpha_n\circ \beta_n\circ p_n(y,x,z,x,z,y)\\
&=\alpha_n\circ \beta_n\circ 
(\alpha_n)^{-1}\circ p(y,x,z,x,z,y)\; .
\end{align*}
This means that for almost all $n\geq 1$, there exists an automorphism of $\sS_n$ such that $p(x,y,x,z,y,z)$ can be composed with that automorphism from the outside to obtain $p(y,x,z,x,z,y)$ on $F$. By a~standard compactness argument, there exists an automorphism of $\sSa$ with this property. Again by a~standard compactness argument, there exist endomorphisms of $\sSa$ witnessing that $p$ is a~pseudo-Siggers polymorphism of $(\sSa,\neq)$.
\end{proof}

We can therefore apply the following result from~\cite{BKOPP17}.

\begin{theorem}\label{thm:equations}
  Let ${\mathscr C}$ be the polymorphism clone of an $\omega$-categorical model-complete core. Suppose that
  \begin{itemize}
    \item ${\mathscr C}$ satisfies a~non-trivial height 1 identity modulo outer unary functions, and
    \item ${\mathscr C}$ has a~uniformly continuous minion homomorphism to~$\Proj$.
  \end{itemize}
  Then ${\mathscr C}$ has at least doubly exponential orbit growth.
\end{theorem}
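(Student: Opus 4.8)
The plan is to argue by contraposition and thereby reduce the statement to the ``pseudo-loop lemma'' for $\omega$-categorical structures with slow orbit growth of~\cite{BKOPP17,BKOPP17a}. So I would assume that $\clo C=\Pol(\aA)$ for an $\omega$-categorical model-complete core $\aA$ whose orbit growth is \emph{less than} doubly exponential, and that $\clo C$ has a uniformly continuous minion homomorphism to $\Proj$, and aim to conclude that $\clo C$ satisfies no non-trivial height~1 identity modulo outer unary functions.

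First I would make both hypotheses concrete. Composing with the minion homomorphism $\Proj\to\Pol(\rel K_3)$ (which is uniformly continuous since $\Proj$ lives on a finite set) and applying Theorem~\ref{thm:bop18}, the structure $\rel K_3$ is pp-constructible from $\aA$; I would fix a dimension $d$ and a pp-power $\rel P$ of $\aA$ on domain $A^d$ that is homomorphically equivalent to $\rel K_3$, noting as usual that every polymorphism of $\aA$, and every endomorphism, acts coordinatewise on $\rel P$ (cf.\ the manipulation of the pp-power in the proof of Lemma~\ref{lem:S}). For the other hypothesis, by the known equivalences for polymorphism clones of $\omega$-categorical model-complete cores (see~\cite{BartoPinskerDichotomy,Topo} and cf.\ the discussion of Conjecture~\ref{conj:tract}), satisfying \emph{some} non-trivial height~1 identity modulo outer unary functions is the same as having a pseudo-Siggers polymorphism; so it suffices to derive a contradiction from the assumption that $\aA$ has a pseudo-Siggers polymorphism $s$ with witnessing endomorphisms $e_1,e_2$. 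Because $\aA$ is a model-complete core, on every finite subset of $A$ the maps $e_1,e_2$ agree with automorphisms; hence, restricted to a finite set, $s$ satisfies the \emph{genuine} Siggers identity modulo automorphisms of $\aA$ that may depend on the inputs.

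The remaining assertion -- that an $\omega$-categorical model-complete core which pp-constructs $\rel K_3$ and carries a pseudo-Siggers polymorphism must have at least doubly exponential orbit growth -- is exactly the pseudo-loop lemma of~\cite{BKOPP17,BKOPP17a}, and I expect it to be the main obstacle. The idea is that the input-dependent automorphisms from the previous paragraph can, if $\aA$ has few orbits, be synchronized: working along an exhaustion of $A$ by finite unions of orbits, and -- in order to obtain canonical behaviour of polymorphisms -- after first passing to a Ramsey expansion of $\aA$, which changes neither $\omega$-categoricity nor the orbit-growth class, one canonizes $s$ and estimates the number of essentially distinct ways its pseudo-Siggers witnesses can interact with the finite gadget realizing the pp-construction of $\rel K_3$. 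Were that number subdoubly-exponential, the witnesses would glue into a genuine Siggers operation on a finite truncation $\rel P[A_0^d]$ of the pp-power; but for $A_0$ large this truncation is homomorphically equivalent to $\rel K_3$, so by the finite-domain dichotomy (Theorem~\ref{thm:bulatov-zhuk}) it cannot have a Siggers polymorphism, a contradiction. Carrying out this quantitative bookkeeping cleanly -- tracking exactly how the outer endomorphisms interfere with the homomorphic-equivalence retraction onto $\rel K_3$ at each finite scale -- is the technically demanding step; by contrast, the two reductions in the first two paragraphs are routine, given Theorem~\ref{thm:bop18}, the model-complete-core property, and the cited equivalences.
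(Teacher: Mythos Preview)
The paper does not prove Theorem~\ref{thm:equations}; it is quoted verbatim as ``the following result from~\cite{BKOPP17}'' and then applied in Lemma~\ref{lem:localh1}. So there is no in-paper proof to compare your proposal against.

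That said, your proposal is not really a proof either: the first two paragraphs perform correct but lightweight reductions (via Theorem~\ref{thm:bop18} and the Barto--Pinsker equivalence), after which you arrive at a statement you yourself identify as ``exactly the pseudo-loop lemma of~\cite{BKOPP17,BKOPP17a}''. Since Theorem~\ref{thm:equations} \emph{is} that result, this is circular; what remains is a one-paragraph sketch of how you imagine the proof in~\cite{BKOPP17} goes.

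That sketch also contains a genuine misconception. You propose to ``pass to a Ramsey expansion of $\aA$'' in order to canonize polymorphisms. This is not available in the generality of the theorem: an arbitrary $\omega$-categorical structure need not admit an $\omega$-categorical Ramsey expansion, and even when one exists there is no reason it should preserve the less-than-doubly-exponential orbit-growth bound, which is the crux of the hypothesis. The actual argument in~\cite{BKOPP17,BKOPP17a} avoids Ramsey theory and canonical functions entirely; it is a direct pigeonhole/counting argument on orbits of tuples (hence ``pseudo-loop''), showing that sub-doubly-exponential growth forces two specific tuples arising from the pseudo-Siggers witnesses to lie in the same orbit, which then yields a genuine Siggers-type operation on a finite quotient and contradicts the minion homomorphism to $\Proj$. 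Your appeal to the finite dichotomy (Theorem~\ref{thm:bulatov-zhuk}) at the end is also misplaced: one does not need Bulatov--Zhuk to know that $\Pol(\rel K_3)$ has no Siggers operation; that is elementary, since $\Pol(\rel K_3)$ consists only of essentially unary maps.
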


\begin{lemma}\label{lem:localh1}
\changed{Suppose that $\sSa$ has less than doubly exponential orbit growth.} Then  there is no uniformly continuous minion homomorphism from $\Pol(\sSa,\neq)$ to $\Proj$.
\end{lemma}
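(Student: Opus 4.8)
The plan is to apply Theorem~\ref{thm:equations} to the clone ${\mathscr C} := \Pol(\sSa,\neq)$ in contrapositive form. First I would verify that the two structural hypotheses of that theorem are in place. By Lemma~\ref{lem:core}, $(\sSa,\neq)$ is an $\omega$-categorical model-complete core, so ${\mathscr C}$ is indeed the polymorphism clone of such a structure. By Lemma~\ref{lem:psiggers}, ${\mathscr C}$ contains a pseudo-Siggers operation $p$, i.e.\ there are endomorphisms $e_1,e_2$ of $(\sSa,\neq)$ with $e_1\circ p(x,y,x,z,y,z) = e_2\circ p(y,x,z,x,z,y)$ for all $x,y,z$ in the domain. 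This says precisely that ${\mathscr C}$ satisfies the Siggers identity modulo outer unary functions, and the Siggers identity is a non-trivial height~1 identity (checking coordinates, no projection satisfies $s(x,y,x,z,y,z)\approx s(y,x,z,x,z,y)$). Hence ${\mathscr C}$ satisfies a non-trivial height~1 identity modulo outer unary functions, which is the first hypothesis of Theorem~\ref{thm:equations}.

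Now I would argue by contradiction. Assume there is a uniformly continuous minion homomorphism from ${\mathscr C}$ to $\Proj$. Then both hypotheses of Theorem~\ref{thm:equations} hold, and we conclude that ${\mathscr C}$ has at least doubly exponential orbit growth. The orbit growth of the polymorphism clone of a model-complete core is, by definition, the orbit growth of its automorphism group; moreover expanding $\sSa$ by the relation $\neq$ does not change automorphisms, so $\Aut(\sSa,\neq)=\Aut(\sSa)$, and therefore $\sSa$ has at least doubly exponential orbit growth. This contradicts the hypothesis of the lemma. Consequently no uniformly continuous minion homomorphism from $\Pol(\sSa,\neq)$ to $\Proj$ exists.

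The proof is short, so there is no serious obstacle; the only points needing a moment of care are bookkeeping matches between definitions: that ``non-trivial height~1 identity modulo outer unary functions'' in Theorem~\ref{thm:equations} is exactly what a pseudo-Siggers polymorphism furnishes (together with the elementary observation that the Siggers identity itself is non-trivial), and that the notion of ``orbit growth of a clone'' appearing there agrees, for a model-complete core, with the orbit growth of $\sSa$ as used in the statement of the lemma. Both are immediate from the relevant definitions and the fact that $\sSa$ and $(\sSa,\neq)$ have the same orbits.
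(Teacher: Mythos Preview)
Your proposal is correct and follows essentially the same approach as the paper: invoke Lemma~\ref{lem:core} to see that $(\sSa,\neq)$ is an $\omega$-categorical model-complete core, Lemma~\ref{lem:psiggers} to obtain a pseudo-Siggers polymorphism, and then apply Theorem~\ref{thm:equations} (in contrapositive form, using the orbit-growth hypothesis). The paper's proof is terser, but your added bookkeeping about non-triviality of the Siggers identity and the equality $\Aut(\sSa,\neq)=\Aut(\sSa)$ is exactly the glue that makes the application of Theorem~\ref{thm:equations} precise.
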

\begin{proof}
\changed{By Lemma~\ref{lem:psiggers}, $\Pol(\sSa,\neq)$ contains a~pseudo-Siggers operation, and by Lemma~\ref{lem:core}, $(\sSa,\neq)$ is an $\omega$-categorical model-complete core.} Hence, Theorem~\ref{thm:equations} implies that $\Pol(\sSa,\neq)$ has no uniformly continuous minion homomorphism to $\Proj$.
\end{proof}

\begin{proof}[Proof of Theorem~\ref{thm:main-3}]
{Let $g \colon {\mathbb N} \to {\mathbb N}$ 
be the function from {the last statement of} Lemma~\ref{lem:orbitgrowth}. 
The structure $(\sSa,\neq)$ from  Construction~\ref{constr:s} is an $\omega$-categorical model-complete core without algebraicity (Lemma~\ref{lem:core})
and has less than doubly exponential orbit growth  (Lemma~\ref{lem:orbitgrowth})}. Moreover, $\Pol(\sSa,\neq)$ has a~minion homomorphism to $\Proj$ by Lemma~\ref{lem:noh1}, but no uniformly continuous such map by Lemma~\ref{lem:localh1}.
\end{proof}

%
  \bibliographystyle{alpha}
  \bibliography{local,global}

  \appendix
\section{Constructing weaker height~1 conditions}
\label{sect:a1}
Let us consider two loopless graphs ${\rel G}$ and ${\rel H}$ that are not 3-colorable, i.e., $\Sigma_{\rel G}$ and $\Sigma_{\rel H}$ are 
non trivial. An edge $e$ of a~graph ${\rel M}$ is called \emph{critical} if the graph 
\[
{\rel M} - e
\]
obtained from ${\rel M}$ by removing $e$ is 3-colorable. We first replace ${\rel G}$ by a~subgraph of ${\rel G}$ that has a~critical edge $e$. This can be done by repeatedly removing edges until we obtain a~3-colorable graph; the edge that we removed in the last step will be critical for the second-to-last graph of this procedure. Note that the height 1 condition \changed{associated with} the subgraph obtained in this way is still 
non trivial (since the subgraph is not 3-colorable) and implied by the height 1 condition of the original graph by Lemma~\ref{lem:hom-between-sigma_g}.  We modify ${\rel H}$ in the same way as ${\rel G}$, and fix a critical edge $f$ of $\hH$.

Our next step is to glue together $\gG$ and $\hH$ at the critical edges $e$ and $f$ using a gadget graph ${\rel N}$, which is given in Fig.\ \ref{fig:1}. The graph ${\rel N}$ contains four special vertices that are labeled by $x,x',y,y'$, and a special edge labeled by $d$, and has the following properties:
\begin{itemize}
\item Every homomorphism $c\colon {\rel N}\to \rel K_3$ satisfies $c(x) \neq c(x')$ or $c(y) \neq c(y')$ but not both;
\item Every mapping $c\colon \{x,x',y,y'\} \to \rel K_3$ that satisfies the property above extends to a~homomorphism from ${\rel N}$ to~$\rel K_3$.
\item Every mapping $c\colon \{x,x',y,y'\} \to \rel K_3$ that satisfies $c(x) = c(x')$ and $c(y) =c(y')$ can be extended to a~$3$-coloring of~${\rel N}-d$.
\end{itemize}
\begin{figure}
  \[
    \begin{tikzpicture}[ dot/.style = { draw, circle, fill, inner sep=1.5pt }, scale = 2 ]
      \foreach \x in {0,1,3,4} {
        \node [dot] at (\x+.5,.5) {};
        \node [dot] at (\x,0) {};
        \node [dot] at (\x,1) {};
        \draw (\x,0) -- (\x+1,1);
        \draw (\x+1,0) -- (\x,1);
      }
      \node [dot] at (2,0) {};
      \node [dot] at (2,1) {};
      \draw (0,0) -- (5,0);
      \draw (0,1) -- (5,1);

      \node [label=right:{$y'$},dot,fill=white] at (5,0) {};
      \node [label=right:{$y$},dot,fill=white] at (5,1) {};
      \node [label=left:{$x'$},dot,fill=white] at (0,0) {};
      \node [label=left:{$x$},dot,fill=white] at (0,1) {};

      \node [dot] (a) at (2.5,-.5) {};
      \node [dot] (b) at (2.5,1.5) {};

      \draw (2,0) -- (3,1);
      \draw (3,0) -- (2,1);

      \draw (a) edge [bend right=15] node [right] {$d$} (b);
      \draw (2,0) -- (a) -- (3,0);
      \draw (2,1) -- (b) -- (3,1);
    \end{tikzpicture}
  \]
  \caption{The gadget graph ${\rel N}$.} \label{fig:1}
\end{figure}
In our glueing construction, we will only need these three properties of ${\rel N}$, i.e., any other graph with the same properties would work as well. We construct a~new graph, denoted by $({\rel G},e)\oplus ({\rel H},f)$, in the following way:
\begin{enumerate}
  \item We first glue together $\rel N$ and $\gG$ by replacing the edge $e$ by the pair $(x,x')$  of $\rel N$ (the pair $(x,x')$ remaining a~non-edge), and leaving the other vertices disjoint, and then
  \item we add the graph $\hH$ to the construction by replacing the edge $f$ by the pair $(y,y')$ of $\rel N$ (the pair $(y,y')$ remaining a~non-edge).
\end{enumerate}

\begin{lemma} \label{lem:2}
Let ${\rel W} := ({\rel G}, e) \oplus ({\rel H}, f)$ be the graph as constructed above. Then:
  \begin{itemize}
    \item[(1)] ${\rel W}$ is not 3-colorable;
    \item[(2)] The edge $d$ is a critical edge of ${\rel W}$;
    \item[(3)] $\Sigma_{\rel W}$ is implied by both $\Sigma_{\rel G}$ and $\Sigma_{\rel H}$.
  \end{itemize}
\end{lemma}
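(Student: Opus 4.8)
The plan is to handle (1) and (2) by pasting together $3$-colorings of the pieces, and (3) by passing to the auxiliary graph ${\rel F}_{\clo C}$ associated with a clone $\clo C$ in the proof of Lemma~\ref{lem:1.2}: vertex set $\clo C^{(3)}$, with an edge $(f_1,f_2)$ exactly when some $g\in\clo C^{(6)}$ satisfies $f_1(x,y,z)=g(x,y,x,z,y,z)$ and $f_2(x,y,z)=g(y,x,z,x,z,y)$. I will use throughout that, for a finite graph ${\rel M}$, one has $\clo C\models\Sigma_{\rel M}$ if and only if there is a homomorphism ${\rel M}\to{\rel F}_{\clo C}$: a solution of $\Sigma_{\rel M}$ gives the homomorphism $v\mapsto f_v$, and conversely the images of the vertices together with witnesses for the images of the edges give a solution. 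This is implicit in the proof of Lemma~\ref{lem:1.2}.

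For (1): a $3$-coloring of ${\rel W}$ restricts to a homomorphism ${\rel N}\to\rel K_3$, so by the first property of ${\rel N}$ it gives distinct colors to the endpoints of exactly one of $e$ and $f$; if it does so for $e$, its restriction to ${\rel G}-e$ is then a $3$-coloring of ${\rel G}$, contradicting that ${\rel G}$ is not $3$-colorable, and the case of $f$ is symmetric. For (2): ${\rel W}$ is not $3$-colorable by (1), so it suffices to $3$-color ${\rel W}-d$. As $e$ is critical, ${\rel G}-e$ has a $3$-coloring, and since ${\rel G}$ is not $3$-colorable this coloring gives the endpoints of $e$ the same color; likewise for ${\rel H}-f$ and the endpoints of $f$. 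Permuting colors we may assume both send the relevant endpoints to color $1$; by the third property of ${\rel N}$ the map on $\{x,x',y,y'\}$ constantly equal to $1$ extends to a $3$-coloring of ${\rel N}-d$; the three colorings agree on the glued pairs $\{x,x'\}$ and $\{y,y'\}$, so their union $3$-colors ${\rel W}-d$.

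For (3): by definition of ``implies'' it suffices to show that every clone $\clo C$ admitting a homomorphism $\phi\colon{\rel G}\to{\rel F}_{\clo C}$ also admits one ${\rel W}\to{\rel F}_{\clo C}$. The crucial point is that every edge of ${\rel F}_{\clo C}$ lies in a triangle: if $g$ witnesses the edge $(p,p')$, then $p$, $p'$ and $p''(x,y,z):=g(z,z,y,y,x,x)$ form a triangle in ${\rel F}_{\clo C}$, the two new edges being witnessed by the minors $(a_1,\dots,a_6)\mapsto g(a_3,a_5,a_1,a_6,a_2,a_4)$ and $(a_1,\dots,a_6)\mapsto g(a_5,a_3,a_6,a_1,a_4,a_2)$ of $g$ — a direct verification. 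Applying this to the edge $(\phi(x),\phi(x'))$ of ${\rel F}_{\clo C}$ (which exists since $(x,x')$ is an edge of ${\rel G}$) produces a homomorphism $T\colon\rel K_3\to{\rel F}_{\clo C}$ sending the three colors to $\phi(x)$, $\phi(x')$, and $p''$. Next I $3$-color ${\rel N}\cup({\rel H}-f)$: a $3$-coloring of ${\rel H}-f$ exists by criticality of $f$ and sends the endpoints of $f$ to one color (else ${\rel H}$ would be $3$-colorable); by the second property of ${\rel N}$ there is a homomorphism ${\rel N}\to\rel K_3$ taking $x,x'$ to colors $1,2$ and the endpoints of $f$ to that one color, so the two colorings agree on the pair glued in ${\rel W}$ and combine to a homomorphism $\chi\colon{\rel N}\cup({\rel H}-f)\to\rel K_3$. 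Finally, let $\Phi$ agree with $\phi$ on ${\rel G}-e$ and with $T\circ\chi$ on ${\rel N}\cup({\rel H}-f)$; the two definitions agree on $\{x,x'\}$, and $\Phi$ is a homomorphism ${\rel W}\to{\rel F}_{\clo C}$ because its restrictions to ${\rel G}-e$ and to ${\rel N}\cup({\rel H}-f)$ are. That $\Sigma_{\rel H}$ also implies $\Sigma_{\rel W}$ is then immediate: the three properties of ${\rel N}$ are invariant under exchanging $\{x,x'\}$ and $\{y,y'\}$, so $({\rel G},e)\oplus({\rel H},f)\cong({\rel H},f)\oplus({\rel G},e)$, and re-running the argument with the two sides swapped uses the criticality of $f$ in place of that of $e$ and vice versa.

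The only genuinely non-routine step is the triangle-extension claim in (3): the right choice $p''=g(z,z,y,y,x,x)$ for the third vertex has to be found by matching up the six ``columns'' $\binom{x}{y},\binom{y}{x},\binom{x}{z},\binom{z}{x},\binom{y}{z},\binom{z}{y}$ of the Siggers identity, after which the two required edge-witnesses are just explicit reindexings of $g$. Everything else — the two color-pasting arguments and the assembly of $\Phi$ — is bookkeeping around the three defining properties of the gadget ${\rel N}$ and the fact that non-$3$-colorability of ${\rel G}$ and ${\rel H}$ forces $3$-colorings of ${\rel G}-e$ and ${\rel H}-f$ to be constant on the removed edge.
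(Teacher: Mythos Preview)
Your proof is correct and follows essentially the same route as the paper's. Parts~(1) and~(2) are identical in substance. For part~(3), the paper works directly with the functions: it fixes a $3$-coloring $c$ of the ${\rel N}\cup({\rel H}-f)$ part with $c(x)=1$, $c(x')=2$, defines $f_1,f_2,f_3$ from $g_e$ by the very same formulas you use for $p,p',p''$ (in particular $f_3(x,y,z)=g_e(z,z,y,y,x,x)$), and then sets $f_v:=f_{c(v)}$ for vertices outside ${\rel G}$. Your framing via the auxiliary graph ${\rel F}_{\clo C}$ and the composition $T\circ\chi$ is just a repackaging of this: your triangle $\{p,p',p''\}$ is the paper's $\{f_1,f_2,f_3\}$, your $\chi$ is the paper's $c$, and ``$T\circ\chi$'' is exactly ``$f_v:=f_{c(v)}$''. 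One small remark: the isomorphism $({\rel G},e)\oplus({\rel H},f)\cong({\rel H},f)\oplus({\rel G},e)$ does not literally follow from the symmetry of the three \emph{properties} of ${\rel N}$ (that would require an automorphism of ${\rel N}$ swapping $\{x,x'\}$ with $\{y,y'\}$), but your parenthetical ``re-running the argument with the two sides swapped'' is the correct justification and is exactly what the paper does.
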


\begin{proof}
To prove~(1), let us assume that there is a~homomorphism $c\colon {\rel W}\to \rel K_3$. Then neither of its restrictions to the vertices of $\gG$ and the vertices of $\hH$, respectively, is a 3-coloring of ${\rel G}$ or ${\rel H}$, since these graphs are not 3-colorable.  Since all the edges of ${\rel G}$ except $e$ are included in ${\rel W}$, these facts are witnessed on $(x,x')$ and $(y,y')$, i.e., 
         we have that $c(x) = c(x')$ and $c(y) = c(y')$. This implies that the restriction of $c$ to~${\rel N}$ is not a homomorphism (by the properties of $\rel N$ above), a~contradiction.
         
For~(2), we have to show that removing the edge $d$ from ${\rel W}$ we obtain a~3-colorable graph. To find such a~coloring, we first pick 3-colorings of ${\rel G} - e$ and of ${\rel H} - f$, and let $c$ be the union of the two. Then $c$ extends to a~3-coloring of ${\rel W}$, since $c(x) = c(x')$ and $c(y) = c(y')$, and by the properties of $\rel N$ above.

We now prove~(3). Due to the symmetry of the statement it is enough to prove that $\Sigma_{\rel G}$ implies $\Sigma_{\rel W}$. Let us assume that $\clo A$ is a clone which satisfies $\Sigma_{\rel G}$, i.e.,  there are functions $f_v^{\clo A}$ and $g_{(u,v)}^{\clo A}$ for all vertices $v$ of $\gG$ and all edges $(u,v)$ of $\gG$ which witness the satisfaction of $\Sigma_\gG$. We extend this family of functions to a~solution of $\Sigma_{\rel W}$. Before we do that let us fix a $3$-coloring $c$ of the subgraph of ${\rel W}$ induced by the vertices of ${\rel N}$ and ${\rel H}$ such that $c(x) = 1$ and $c(x') = 2$. Such a~coloring exists by the properties of ${\rel H}-f$, of ${\rel N}$, and the construction of $\rel W$. Now, define
    \begin{align*}
      f_1^{\clo A}(x,y,z) & := g_e^{\clo A}(x,y,x,z,y,z)
        \tag{$\spadesuit$.1}\\
      f_2^{\clo A}(x,y,z) & := g_e^{\clo A}(y,x,z,x,z,y)
        \tag{$\spadesuit$.2}\\
      f_3^{\clo A}(x,y,z) & := g_e^{\clo A}(z,z,y,y,x,x)\;.
        \tag{$\spadesuit$.3}
    \end{align*}
    Note that $f_x^{\clo A} = f_1^{\clo A}$ and $f_{x'}^{\clo A} = f_2^{\clo A}$. For any vertex $v$ of ${\rel W}$ which is not a vertex of $\gG$, we put
    \(
      f_v^{\clo A} := f_{c(v)}^{\clo A};
    \)
    for any edge $(u,v)$ of ${\rel W}$ which is not an edge of $\gG$, we define 
    $$
    g_{(u,v)}(x_1,\ldots,x_6):=g_e(x_{\sigma(1)},\ldots,x_{\sigma(6)})\; ,
    $$ 
    where $\sigma$ is a permutation of $\{1,\ldots,6\}$ such that the identities 
    \begin{align*}
      f_u^{\clo A}(x,y,z) &\equals g_{(u,v)}^{\clo A}(x,y,x,z,y,z) \\
      f_v^{\clo A}(x,y,z) &\equals g_{(u,v)}^{\clo A}(y,x,z,x,z,y)
    \end{align*}
    hold.
    This is always possible since when considering any two rows of ($\spadesuit$), the columns of the right-hand side contain all combinations of pairs of different variables.  It is clear that these functions are defined so that they satisfy all identities of $\Sigma_{\rel W}$, which concludes the proof. \qedhere
\end{proof}

\end{document}